        \title[Topological $K$-theory]
        {Topological $K$-theory of the group $C^*$-algebra of a semi-direct product $\IZ^n \rtimes \IZ/m$ for a free conjugation action}
       \author{Martin Langer}
       \author{Wolfgang L\"uck}
        \address{Rheinische Wilhelms-Universit\"at Bonn\\
               Mathematisches Institut\\
               Endenicher Allee 60, 53115 Bonn, Germany}
          \email{martinlanger@yahoo.com}
          \email{wolfgang.lueck@him.uni-bonn.de}
      \urladdr{http://www.math.uni-bonn.de/people/mlanger}
      \urladdr{http://www.him.uni-bonn.de/lueck}
         \date{September, 5th, 2011}
     \keywords{topological $K$-theory of group $C^*$-algebras, 
split extensions of finitely generated free abelian groups by finite cyclic groups.}
    \subjclass[2010]{19L47,46L80}
\DeclareMathAlphabet{\matheurm}{U}{eur}{m}{n}
\newcommand{\eub}[1]{\underline{E}#1}
\newcommand{\bub}[1]{\underline{B}#1}
\DeclareMathOperator{\all}{all}
\DeclareMathOperator{\alt}{alt}
\DeclareMathOperator{\asmb}{asmb}
\DeclareMathOperator{\ch}{ch}
\DeclareMathOperator{\cok}{cok}
\DeclareMathOperator{\ev}{ev}
\DeclareMathOperator{\id}{id}
\DeclareMathOperator{\im}{im}
\DeclareMathOperator{\ind}{ind}
\DeclareMathOperator{\kg}{kg}
\DeclareMathOperator{\map}{map}
\DeclareMathOperator{\odd}{odd}
\DeclareMathOperator{\perm}{perm}
\DeclareMathOperator{\pt}{pt}
\DeclareMathOperator{\pr}{pr}
\DeclareMathOperator{\quot}{quot}
\DeclareMathOperator{\res}{res}
\DeclareMathOperator{\rk}{rk}
\DeclareMathOperator{\tors}{tors}
\newcommand{\calfin}{{\mathcal{F}\text{in}}}
\newcommand{\caltr}{\mathcal{T}\!\text{r}}
  \newcommand{\IC}{\mathbb{C}}
  \newcommand{\II}{\mathbb{I}}
  \newcommand{\IQ}{\mathbb{Q}}
  \newcommand{\IR}{\mathbb{R}}
  \newcommand{\IZ}{\mathbb{Z}}
  \newcommand{\calc}{\mathcal{C}}
  \newcommand{\calm}{\mathcal{M}}
\newcounter{commentcounter}
\theoremstyle{plain}
\newtheorem{theorem}{Theorem}[section]
\newtheorem{lemma}[theorem]{Lemma}
\newtheorem{corollary}[theorem]{Corollary}
\newtheorem*{theorem*}{Theorem}
\theoremstyle{definition}
\newtheorem{example}[theorem]{Example}
\newtheorem{remark}[theorem]{Remark}
\newtheorem{notation}[theorem]{Notation}
\theoremstyle{remark}
\let\c@equation=\c@theorem\makeatother
\newcommand{\version}[1]                       
{\begin{center} last edited on #1\\
last compiled on \today \\
name of texfile: \jobname
\end{center}
}
\newcommand{\xycomsquare}[8]                      
{\xymatrix{#1 \ar[r]^{#2} \ar[d]^{#4} &
#3 \ar[d]^{#5}  \\
#6\ar[r]^{#7} &
#8
}
}
\newcommand{\xycomsquareminus}[8]                      
{\xymatrix{#1 \ar[r]^-{#2} \ar[d]^-{#4} &
#3 \ar[d]^-{#5}  \\
#6\ar[r]^-{#7} &
#8
}
}
\newcommand{\triv}{\caltr}   
\begin{document}

\begin{abstract}
We compute the topological $K$-theory of the group $C^*$-algebra $C^*_r(\Gamma)$ for a group extension
$1 \to \IZ^n \to   \Gamma \to  \IZ/m \to 1$ provided that the conjugation action of $\IZ/m$ on $\IZ^n$ is free outside the origin.
\end{abstract}

\maketitle


\typeout{--------------------   Introduction --------------------------}

\section*{Introduction}

Throughout this paper let $1 \to \IZ^n \to \Gamma \to \IZ/m \to 1$ be a group extension such that
conjugation action of $\IZ/m$ on $\IZ^n$ is free outside the origin.
Our main goal is to compute the topological $K$-theory of the group $C^*$-algebra $C^*_r(\Gamma)$. This
generalizes results of Davis-L\"uck~\cite{Davis-Lueck(2010)}, where $m$ was
assumed to be a prime.  Except ideas from that paper, the proof of a Conjecture
due to Adem-Ge-Pan-Petrosyan in Langer-L\"uck~\cite{Langer-Lueck(2011_homology)} is a key
ingredient. The calculation and its result are surprisingly complicated. It will
play an important role in a forthcoming paper by Li-L\"uck~\cite{Li-Lueck(2011)}.
There the computation of the topological $K$-theory of a $C^*$-algebra
associated to the ring of integers in an algebraic number field will be carried
out in general, thus generalizing the work of Cuntz and
Li~\cite{Cuntz-Li(2009integers)} who had to assume that $+1$ and $-1$ are the
only roots of unity.


\subsection{Main Result}
\label{subsec:main_result}

Our main result is the following theorem. In the sequel $C^*_r(G)$ is the
\emph{reduced group $C^*$-algebra} of a group $G$.  We denote by $\eub{G}$ the
\emph{classifying space of proper actions} of a group $G$ and by $\bub{G}$ its
quotient space $G\backslash \eub{G}$. Let $\widehat{H}^*(G;M)$ be the \emph{Tate
  cohomology} of a group $G$ with coefficients in a $\IZ G$-module $M$. Denote by $\Lambda^i\IZ^n$
the $i$-th exterior power.

\begin{theorem}[Computation of the topological $K$-theory]
  \label{the:Topological_K-theory_for_Zr_rtimes_Z/n}
  Consider the  extension of groups
  $1 \to \IZ^n \to   \Gamma \to  \IZ/m \to 1$ such  that the 
  conjugation action  of $\IZ/m$ on
  $\IZ^n$ is free outside the origin  $0 \in \IZ^n$.
  Let $\calm$ be the set of conjugacy  classes  of   maximal finite subgroups of $\Gamma$.

\begin{enumerate}

\item \label{the:Topological_K-theory_for_Zr_rtimes_Z/n:K1_abstract}
  We obtain an isomorphism
\[
\omega_1\colon K_1(C^*_r(\Gamma)) \xrightarrow{\cong} K_1(\bub{\Gamma}).
\]
Restriction with the inclusion $k \colon \IZ^n \to \Gamma$ induces an
isomorphism
\[
k^* \colon K_1(C^*_r(\Gamma)) \xrightarrow{\cong} K_1(C^*_r(\IZ^n))^{\IZ/m}.
\]
Induction with the inclusion $k$ yields a homomorphism
\[
\overline{k_*} \colon \IZ \otimes_{\IZ[\IZ/m]} K_1(C^*_r(\IZ^n))
\to K_1(C^*_r(\Gamma)).
\]
It fits into an exact sequence
\[
0 \to \widehat{H}^{-1}(\IZ/m,K_1(C^*_r(\IZ^n))) \to
 \IZ \otimes_{\IZ[\IZ/m]}  K_1(C^*_r(\IZ^n))  
\xrightarrow{\overline{k_*}}  K_1(C^*_r(\Gamma)) \to 0.
\]
In particular $\overline{k_*}$
is surjective and its  kernel is annihilated by multiplication
with $m$;

\item \label{the:Topological_K-theory_for_Zr_rtimes_Z/n:K0_abstract}
  There is an exact sequence
\[
0 \to \bigoplus_{(M) \in \calm} \widetilde{R}_{\IC}(M) 
\xrightarrow{\bigoplus_{(M) \in \calm} i_M} K_0(C^*_r(\Gamma))
\xrightarrow{\omega_0} K_0(\bub{\Gamma}) \to 0,
\]
 where
$\widetilde{R}_{\IC}(M)$ is the kernel of the map $R_{\IC}(M) \to \IZ$
sending the class $[V]$ of a complex $M$-representation $V$ to
$\dim_{\IC}(\IC \otimes_{\IC M} V)$ and the map $i_M$ comes from the
inclusion $M \to \Gamma$ and the identification $R_{\IC}(M) = K_0(C^*_r(M))$.

We obtain a homomorphism
\[
\overline{k_*} \oplus \bigoplus_{(M) \in \calm} i_M \colon
 \IZ \otimes_{\IZ[\IZ/m]}  K_0(C^*_r(\IZ^n)) \oplus \bigoplus_{(M) \in \calm}
\widetilde{R}_{\IC}(M) \to K_0(C^*_r(\Gamma)).
\]
It is injective. It is bijective after inverting $m$;

\item \label{the:Topological_K-theory_for_Zr_rtimes_Z/n:explicite}
We have
\[
K_i(C^*_r(\Gamma)) \cong \IZ^{s_i}
\]
where
\[
s_i = 
\begin{cases}
\bigl(\sum_{(M) \in \calm} (|M|-1)\bigr) 
+ \sum_{l \in \IZ} \rk_{\IZ}\bigl((\Lambda^{2l} \IZ^n)^{\IZ/m}\bigr)
& \text{if}\; i \; \text{even};
\\
\sum_{l \in \IZ} \rk_{\IZ}\bigl((\Lambda^{2l+1} \IZ^n)^{\IZ/m}\bigr)
& \text{if}\; i \; \text{odd};
\end{cases}
\]

\item \label{the:Topological_K-theory_for_Zr_rtimes_Z/n:non-torsion:n_even}
If $m$ is even, then $s_1 = 0$ and
\[
K_1(C^*_r(\Gamma)) \cong \{0\}.
\]

\end{enumerate}
\end{theorem}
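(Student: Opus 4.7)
The plan is to deduce the vanishing of $K_1(C^*_r(\Gamma))$ directly from part~\ref{the:Topological_K-theory_for_Zr_rtimes_Z/n:explicite}, which already identifies $K_1(C^*_r(\Gamma))$ with the free abelian group $\IZ^{s_1}$. The claim $K_1(C^*_r(\Gamma)) = \{0\}$ therefore reduces to showing $s_1 = 0$; since $s_1$ is a sum of nonnegative ranks, this is equivalent to showing $(\Lambda^{2l+1}\IZ^n)^{\IZ/m} = 0$ for every $l$.

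The key observation is that when $m$ is even, the unique element $\sigma \in \IZ/m$ of order $2$ must act on $\IZ^n$ as $-\id$. Indeed, $\sigma^2 = \id$ forces $\sigma$ to be diagonalizable over $\IQ$ with eigenvalues in $\{+1,-1\}$. If the $+1$-eigenspace in $\IQ^n$ were nontrivial, it would contain a nonzero integral vector, contradicting the freeness of the $\IZ/m$-action outside $0$. Hence this eigenspace is trivial and $\sigma$ acts as $-\id$ on all of $\IZ^n$.

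Consequently $\sigma$ acts on $\Lambda^k \IZ^n$ as multiplication by $(-1)^k$, which for odd $k$ equals $-\id$. Since $\Lambda^k \IZ^n$ is torsion-free, $(\Lambda^{2l+1}\IZ^n)^\sigma = 0$, and a fortiori $(\Lambda^{2l+1}\IZ^n)^{\IZ/m} = 0$ for every $l$. This yields $s_1 = 0$ and therefore $K_1(C^*_r(\Gamma)) = \{0\}$.

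The only substantive step is the identification $\sigma = -\id$; all remaining steps are formal sign manipulations on exterior powers. Evenness of $m$ is the precise hypothesis needed here: an odd cyclic group acting freely on $\IZ^n \setminus \{0\}$ need not contain any element forcing the alternating sign, and odd exterior powers may then contribute nontrivially to $s_1$.
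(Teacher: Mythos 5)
Your argument for assertion~\ref{the:Topological_K-theory_for_Zr_rtimes_Z/n:non-torsion:n_even} is correct and is essentially identical to the paper's own proof of that assertion: the paper likewise observes that the order-two element of $\IZ/m$ must act as $-\id$ on $\IZ^n$ because the action is free outside the origin, hence acts as $-\id$ on $\Lambda^{2l+1}\IZ^n$, so the invariants vanish, $s_1=0$, and the claim follows from assertion~\ref{the:Topological_K-theory_for_Zr_rtimes_Z/n:explicite}. Your extra paragraph justifying $\sigma=-\id$ via diagonalizability over $\IQ$ and the absence of a nonzero fixed integral vector is a harmless elaboration of the same point.

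The genuine gap is that you have only proved one of the four assertions. The statement you were asked to prove is the entire theorem, and your argument takes assertion~\ref{the:Topological_K-theory_for_Zr_rtimes_Z/n:explicite} as an input rather than proving it; assertions~\ref{the:Topological_K-theory_for_Zr_rtimes_Z/n:K1_abstract}, \ref{the:Topological_K-theory_for_Zr_rtimes_Z/n:K0_abstract} and \ref{the:Topological_K-theory_for_Zr_rtimes_Z/n:explicite} carry essentially all of the content. In the paper these require: the Baum--Connes assembly map and the $\Gamma$-pushout description of $\eub{\Gamma}$ for groups satisfying condition $(N\!M_{\triv\subseteq\calfin})$, yielding the exact sequence of Corollary~\ref{cor:long_exact_sequence_for_K_ast(Cast(G))_for_G_satisfying_NM} and Theorem~\ref{the:extension_of_Zn_by_F}; the Double Coset Formula identifying $k^*\circ k_*$ with multiplication by the norm element, combined with the vanishing of $\widehat{H}^i(\IZ/m;\Lambda^j\IZ^n_\rho)$ for $i+j$ odd (Theorem~\ref{the:Tate_cohomology}) to get surjectivity of $k^*$ and injectivity statements; and the spectral-sequence computations of $K^1(B\Gamma)$ and $K^0(\bub{\Gamma})$ (Lemmas~\ref{lem:K1(eub(Gamma)_to_K(BGamma)_iso_if_MN}, \ref{lem:K1(BG)_n_square-free}, \ref{lem:K0(bubG)_n_square-free} and \ref{lem:equivariant_K-cohomology_of_eubG_square-free}) that produce the ranks $s_i$ and establish that the groups are free abelian. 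None of this apparatus appears in your proposal, so as a proof of the stated theorem it is incomplete; as a proof of assertion~\ref{the:Topological_K-theory_for_Zr_rtimes_Z/n:non-torsion:n_even} conditional on the earlier assertions, it is fine.
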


Another interesting result is Theorem~\ref{the:Cohomology_of_BGamma_and_bub(Gamma)}
where we will address the cohomology of $\Gamma$ and  of the associated toroidal quotient $\bub{\Gamma} = \Gamma \backslash\IR^n$.


\subsection{Organization of the paper}
\label{subsec:Organization_of_the_paper}

We will compute $K_*(C^*_r(G))$ in a more general setting in
Section~\ref{sec:Groups_satisfying_condition_(M)}, where we consider groups $G$
for which each non-trivial finite subgroup is contained in a unique maximal
finite subgroup.  In Section~\ref{sec:Groups_satisfying_condition_(NM)} we
consider the special case where we additionally assume that the normalizer of
every maximal finite subgroup is the maximal finite subgroup itself.  The groups
$\Gamma$ appearing in Theorem~\ref{the:Topological_K-theory_for_Zr_rtimes_Z/n} 
will satisfy this assumption.

In Sections~\ref{sec:The_cohomology_of_classifying_spaces} and~\ref{sec:Topological_K-theory_of_classifying_spaces} we deal with the
cohomology and the topological $K$-theory of the spaces $B\Gamma$ and $\bub{\Gamma}$, and, finally
complete the proof of Theorem~\ref{the:Topological_K-theory_for_Zr_rtimes_Z/n}.

The rest of the paper is devoted to the computation of the numbers $s_i$ 
appearing in Theorem~\ref{the:Topological_K-theory_for_Zr_rtimes_Z/n}. 
Recall that $\IZ^n$ becomes a $\IZ[\IZ/m]$-module by the conjugation action. Sometimes we write
$\IZ^n_{\rho}$ instead of $\IZ^n$ to emphasize the $\IZ[\IZ/m]$-module structure. Notice that the numbers $s_i$
are determined by the set $\calm$ of conjugacy classes of maximal finite subgroups of $\Gamma$
and the two numbers  $\sum_{l \ge 0} \, \rk_{\IZ}((\Lambda^l \IZ^n_{\rho})^{\IZ/m})$ 
and $\sum_{l \ge 0} (-1)^l  \rk_{\IZ}((\Lambda^l \IZ^n_{\rho})^{\IZ/m})$.

Section~\ref{sec:Conjugacy_classes_of_finite_subgroups_and_cohomology} is 
devoted to compute the partial ordered set  of conjugacy classes of finite subgroups
of $\Gamma$, directed by subconjugation, in terms of group cohomology. This yields also a computation
of the set $\calm$.

In Section~\ref{sec:The_prime_power_case} we will compute $\calm$ and 
the numbers $\sum_{l \ge 0} \, \rk_{\IZ}((\Lambda^l \IZ^n_{\rho})^{\IZ/m})$ in the case, where $m$ is a prime power. 

The general case is
treated in Section~\ref{sec:The_general_product_case}.  The calculation of
$\calm$ is explicit. The numbers $\sum_{l \ge 0} \, \rk_{\IZ}((\Lambda^l  \IZ^n_{\rho})^{\IZ/m})$ will be computed explicitly provided
that $m$ is even. For $m$ odd our methods yield  at least a recipe for a case by case computation, the problem is to determine
$\sum_{l \ge 0} \, \rk_{\IZ}((\Lambda^l \IZ^n_{\rho})^{\IZ/m})$. 
 Since the roots of unity in an algebraic number field
is always a finite cyclic group of even order, the case, where $m$ is even, is the most interesting for us.

In Section~\ref{sec:Equivariant_Euler_characteristics} we compute the equivariant
$\IZ/m$-Euler characteristic of $\IZ^n \backslash \eub{\Gamma}$ which takes
values in the Burnside ring of $\IZ/m$. This will determine explicitly the numbers
$\sum_{l \ge 0} (-1)^l \cdot  \rk_{\IZ}((\Lambda^l  \IZ^n_{\rho})^{\IZ/m})$.

The actual answers to the computations of the set $\calm$ and the numbers $s_i$ are interesting but also very complicated. 
As an illustration we state and explain some examples already here in the introduction. 
The group $\Gamma$ and the numbers $m$ and $n$ are the ones appearing
in Theorem~\ref{the:Topological_K-theory_for_Zr_rtimes_Z/n}. Here and in the sequel we will use the convention
that a sum of real numbers indexed by the empty set is understood to be zero, e.g.,
$\sum_{i=2}^1 a_i$ is defined to be zero.


\subsection{$m$ is a prime}
\label{subsec:m_is_prime}

Suppose that $m = p$ for a prime number $p$.
We have $\IZ^n_{\rho} \otimes_{\IZ} \IQ \cong_{\IQ[\IZ/p]} \IQ[\zeta_p]^k$
for $\zeta_p = \exp(2\pi i/p)$ and some natural number $k$.
The natural number $k$ is determined by the property $n = (p-1) \cdot k$.
All  non-trivial finite subgroups of $\Gamma$ are cyclic of order $p$ and are maximal finite. 
We obtain
from Theorem~\ref{the:prime_power_case}, Lemma~\ref{lem:sum_rk_Z(Lambda_j_L_G)} and
Theorem~\ref{the:a_priori_estimates}~\ref{the:a_priori_estimates:G/backslashunderlineEG}
\begin{eqnarray*}
|\calm| & = & p^k;
\\
\sum_{(M) \in \calm} (|M|-1)
& = & 
p^k \cdot (p-1);
\\
\sum_{l \in \IZ} \rk_{\IZ}\bigl((\Lambda^{l} L)^{\IZ/m}\bigr) 
& = & 
\begin{cases}
1 + \frac{2^{n}-1}{p} & p \not= 2;
\\
2^{k-1} & p = 2;
\end{cases}
\\
\sum_{l \in \IZ} (-1)^l \cdot \rk_{\IZ}\bigl((\Lambda^{l} \IZ^n)^{\IZ/m}\bigr)
& = & 
p^{k-1} \cdot(p-1).
\end{eqnarray*}
This implies (and is consistent with~\cite{Davis-Lueck(2010)})
\begin{eqnarray*}
s_i & = & 
\begin{cases}
 p^k \cdot (p-1) + \frac{2^{n}+ p -1}{2p} + \frac{p^{k-1} \cdot (p-1)}{2} & p \not= 2 \; \text{and} \; i \; \text{even};
\\
\frac{2^{n}+ p -1}{2p} - \frac{p^{k-1} \cdot (p-1)}{2}  & p \not= 2 \; \text{and} \: i \; \text{odd};
\\
3 \cdot 2^{k-1} & p = 2 \; \text{and} \; i \; \text{even};
\\
0 & p = 2 \; \text{and} \; i \; \text{odd}.
\end{cases}
\end{eqnarray*}


\subsection{$m$ is a prime power $p^r$ for $r \ge 2$}
\label{subsec:m_prime_power}
Next we consider the case, where $m$ is a prime power, let us say $m = p^r$.
Since we have treated the case
$r = 1$ already in Example~\ref{subsec:m_is_prime}, we will assume in the sequel $r \ge2$.
There exists precisely one natural number $k$ satisfying $n= (p-1) \cdot p^{r-1} \cdot k$.
We obtain from  Theorem~\ref{the:prime_power_case}
and Remark~\ref{rem:expressing_sums_involving_call(M)}
\begin{eqnarray*}
\sum_{(M) \in \calm} (|M|-1)
& = & 
\sum_{j = 1}^{r}  \sum_{(M) \in \calm(G_j)} (p^j -1)
\\
& = & 
p^k \cdot (p^r -1) + \sum_{j = 1}^{r-1} \bigl(p^{kp^{r-j}-r +j} - p^{kp^{r-j -1}-r + j}\bigr) \cdot (p^j -1). 
\end{eqnarray*}

We get from Lemma~\ref{lem:sum_rk_Z(Lambda_j_L_G)}
\begin{eqnarray*}
\sum_{l \ge 0} \rk_{\IZ}(\Lambda^l L)^{\IZ/m})
& = & 
1 + \frac{2^{n}-1}{p^r} \quad  \text{if}\; p \not= 2.
\end{eqnarray*}
If $p = 2$,   Lemma~\ref{lem:sum_rk_Z(Lambda_j_L_G)} yields
\begin{eqnarray*}
\sum_{l \ge 0} \rk_{\IZ}(\Lambda^l L)^{\IZ/m}
& = & 
\begin{cases}
 2^{2k -2} + 2^{k-1}  & \text{if}\; r = 2;
\\
2^{k-1}   +  2^{k2^{r-2} -r+1} + 2^{k \cdot 2^{r-1}-r}  +  \sum_{i=3}^{r-1}  2^{k2^{r-i}-r+i-1}  & \text{if}\; r \ge 3.
\end{cases}
\end{eqnarray*}
We get from Theorem~\ref{the:prime_power_case},
Theorem~\ref{the:a_priori_estimates}~\ref{the:a_priori_estimates:G/backslashunderlineEG}
and Remark~\ref{rem:expressing_sums_involving_call(M)}
\begin{eqnarray*}
\sum_{l \ge 0} (-1)^l \cdot \rk_{\IZ}(\Lambda^l L)^{\IZ/m})
& = &
p^k \cdot \frac{(p^r -1)}{p^r} + \sum_{j = 1}^{r-1} \bigl(p^{kp^{r-j}-r+j} - p^{kp^{r-j -1}-r+j}\bigr)\cdot \frac{p^j -1}{p^j} 
\\
& = &
p^k - p^{k-r} + \sum_{j = 1}^{r-1} \bigl(p^{kp^{r-j}-r} - p^{kp^{r-j -1}-r}\bigr) \cdot (p^j -1).
\end{eqnarray*} 
We leave it to the reader to combine these results to determine the numbers $s_i$.

\begin{example} [$m = 4$] \label{exa:m_is_4}
We get for $m = 4$, i.e., $p = 2$ and $r = 2$
\[
s_i = 
\begin{cases}
3 \cdot 2^{2k-2} + 3 \cdot 2^k & i \; \text{even};
\\
0 &  i \; \text{odd}.
\end{cases}
\]
\end{example} 
\begin{example} [$m = 9$] \label{exa:m_is_9}
We get for $m = 9$, i.e., $p = 3$ and $r = 2$
\[
s_i = 
\begin{cases}
\frac{64^k +8}{18} +  23 \cdot 3^{k-1}  + 7 \cdot 3^{3k-2}  & i \; \text{even};
\\
\frac{64^k +8}{18} - 3^{k-1} - 3^{3k-2} &  i \; \text{odd}.
\end{cases}
\]
\end{example}


\subsection{$m$ is square-free and  even}
\label{subsec:m_is_square-free_and-even}
Next  we consider the case, where $m$ is square-free and even.
The case $m = 2$ has already been treated in Example~\ref{subsec:m_is_prime}.
Hence we will assume in the sequel that $m = p_1 \cdot p_2 \cdot \cdots \cdot p_s$ for pairwise distinct prime numbers
$p_1,p_2, \ldots, p_s$ with $p_1 = 2$ and $s \ge 2$.
Then we get from Example~\ref{exa:M_square-free} and 
 Remark~\ref{rem:expressing_sums_involving_call(M)}
\begin{eqnarray*}
\sum_{(M) \in \calm} (|M|-1)
& = & 
 m-1 +\sum_{j = 1}^s \frac{p_j\cdot (p_j-1) \cdot (p_j^{n/(p_j-1)} -1)}{m}.
\end{eqnarray*}
From Lemma~\ref{lem:m_even_and_Lambda_odd} we get 
$\bigl(\Lambda^{2l+1} L\bigr)^{\IZ/m}  =  \{0\}$
for every $l \ge 0$. 
We conclude from  Example~\ref{exa:M_square-free},
Theorem~\ref{the:a_priori_estimates}~\ref{the:a_priori_estimates:G/backslashunderlineEG}
and Remark~\ref{rem:expressing_sums_involving_call(M)}
\[
\sum_{l \in \IZ} (-1)^l \cdot \rk_{\IZ}\bigl((\Lambda^{l} L)^{\IZ/m}\bigr) 
=  \frac{m-1}{m}  +\sum_{j = 1}^s \frac{(p_j-1) \cdot (p_j^{n/(p_j-1)} -1)}{m}.
\]
Hence we get for $i$ even
\begin{eqnarray*}
s_i  
& = & 
 m-1 +\sum_{j = 1}^s \frac{p_j \cdot (p_j-1) \cdot (p_j^{n/(p_j-1)} -1)}{m}
+ \frac{m-1}{m}  +\sum_{j = 1}^s \frac{(p_j-1) \cdot (p_j^{n/(p_j-1)} -1)}{m}
\\
& = & 
m + \frac{-m + (m-1) + \sum_{j = 1}^s \left(p_j\cdot (p_j-1) \cdot (p_j^{n/(p_j-1)} -1) + (p_j-1) \cdot (p_j^{n/(p_j-1)} -1)\right)}{m}
\\
& = & 
m + \frac{-1 + \sum_{j = 1}^s (p_j^2-1) \cdot (p_j^{n/(p_j-1)} -1)}{m}.
\end{eqnarray*}
Thus we have computed
\[
s_i = 
\begin{cases}
m + \frac{-1 + \sum_{j = 1}^s (p_j^2-1) \cdot (p_j^{n/(p_j-1)} -1)}{m} & i \; \text{even};
\\
0 &  i \; \text{odd}.
\end{cases}
\]


\subsection*{Acknowledgements}
\label{subsec:Acknowledgements}

The work was financially supported by the HCM (Hausdorff Center
for Mathematics) in Bonn, and the Leibniz-Award of the second author.


\typeout{----------   Section 1: Groups satisfying condition (M) --------}

\section{Groups satisfying condition $(M_{\triv \subseteq \calfin})$}
\label{sec:Groups_satisfying_condition_(M)}
 
Let $G$ be a group.  A group $H \subseteq G$ is called \emph{maximal
finite} if $H$ is finite and for every finite subgroup $K \subseteq
G$ with $H \subseteq K$ we have $H = K$. In this section we will make
the assumption $(M_{\triv\subseteq \calfin})$ appearing
in~\cite[Notation~2.7]{Lueck-Weiermann(2007)}, i.e., every non-trivial
finite subgroup of $G$ is contained in a unique maximal finite subgroup.
 
Let $\calm$ be the set of conjugacy
classes of maximal finite subgroups of $G$.  Let $N_GM$ be the
normalizer of $M \subseteq G$.  Put $W_GM = N_GM/M$.  Denote by 
$p_M \colon N_GM \to W_GM$ the canonical projection.  Notice that the group
$W_GM$ contains no torsion since $M \subseteq G$ is maximal finite.
Let $p_M^*EW_GM$ the $N_GM$-space obtained from the $W_GM$-space
$EW_GM$ by restriction with $p_M$. Denote by $\eub{G}$ the classifying
space for proper $G$-action.
For more information about these spaces we refer for instance 
to~\cite{Baum-Connes-Higson(1994)} and~\cite{Lueck(2005s)}.
Put $\bub{G} = G \backslash \eub{G}$.

We will suppose that $G$ satisfies the Baum-Connes Conjecture
(see for example~\cite{Baum-Connes-Higson(1994)} and~\cite{Lueck-Reich(2005)}),
i.e., the assembly map
\begin{eqnarray}
\asmb \colon K_i^G(\eub{G}) & \xrightarrow{\cong} & K_i(C^*_r(G))
\label{assembly_map}
\end{eqnarray}
is bijective for all $i \in \IZ$. Induction with the projection 
$G \to \{1\}$ yields a map (see~\cite[Chapter~6 on pages 732ff]{Lueck-Reich(2005)})
\begin{eqnarray*}
\ind_{G \to \{1\}} \colon K_i^G(\eub{G}) & \to & K_i(\bub{G}).
\end{eqnarray*}
Its composite with the inverse of the assembly map $\asmb$ of~\eqref{assembly_map}
is denoted by
\begin{eqnarray*}
\omega_i \colon K_i(C^*_r(G)) & \to & K_i(\bub{G}).
\end{eqnarray*}
Define
\begin{eqnarray*}
\eta_i \colon K_i(BG) & \to & K_i(C^*_r(G))
\end{eqnarray*}
to be the composite 
\[
K_i(BG) \xrightarrow{(\ind_{G \to \{1\}})^{-1}} K_i^G(EG) \xrightarrow{K_i^G(f)}
K_i^G(\eub{G}) \xrightarrow{\asmb} K_i(C^*_r(G)),
\]
where $f \colon EG \to \eub{G}$ is the up to $G$-homotopy unique $G$-map.
Let 
\[
i_M \colon \ker\bigl((p_M)_*\colon 
  K_i(C^*_r(N_{G}M)) \to K_i(C^*_r(W_{G}M))\bigr)
  \; \to \; 
   K_i(C^*_r(G))
\]
be the map induced by the inclusion $N_GM \to G$.
The main result of this section is

\begin{theorem}
\label{the:long_exact_sequence_for_K_ast(Cast(G))}
Suppose that $G$ satisfies condition $(M_{\triv \subseteq \calfin})$ appearing
in~\cite[Notation~2.7]{Lueck-Weiermann(2007)}, i.e., every non-trivial
finite subgroup of $G$ is contained in a unique maximal finite
subgroup.   Assume that the Baum-Connes Conjecture holds for $G$ and for
$W_GM$ and $N_GM$ for all $(M) \in \calm$. 

\begin{enumerate}
\item \label{the:long_exact_sequence_for_K_ast(Cast(G)):exact_sequence}
Then there is a long exact sequence
\begin{multline*}
  \cdots \xrightarrow{\partial_{i+1}}
  \bigoplus_{(M) \in \calm} \ker\bigl((p_M)_*\colon 
  K_i(C^*_r(N_{G}M)) \to K_i(C^*_r(W_{G}M))\bigr)
  \\
  \xrightarrow{\bigoplus_{(M) \in \calm}  i_M}
   K_i(C^*_r(G))
  \xrightarrow{\omega_i}
  K_i(\bub{G})
  \\
  \xrightarrow{\partial_i} 
  \bigoplus_{(M) \in \calm} \ker\bigl((p_M)_*\colon 
  K_{i-1}(C^*_r(N_{G}M)) \to K_{i-1}(C^*_r(W_{G}M))\bigr)
  \\
  \xrightarrow{\bigoplus_{(M) \in \calm}  i_{i-1}}
   K_{i-1}(C^*_r(G))
  \xrightarrow{\omega_{i-1}} \cdots;
\end{multline*}

\item \label{the:long_exact_sequence_for_K_ast(Cast(G)):split} 
  For every $i \in \IZ$ the map $
  \omega_i \colon K_i(C^*_r(G)) \to K_i(\bub{G})$ 
  is split surjective after inverting the orders of
  all finite subgroups of $G$.

  For every $i \in \IZ$ the homomorphism
  \begin{multline*}
  \eta_i \oplus \bigoplus_{(M) \in \calm}  i_{M} \colon
  K_i(BG) \oplus \bigoplus_{(M) \in \calm} \ker\bigl((p_M)_*\colon
  K_i(C^*_r(N_{G}M)) \to K_i(C^*_r(W_{G}M))\bigr) 
  \\ \to K_i(C^*_r(G))
  \end{multline*}
  is bijective after inverting the orders of all finite subgroups of $G$.
\end{enumerate}
\end{theorem}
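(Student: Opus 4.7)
The plan is to use the Lück--Weiermann $G$-CW-pushout available under condition $(M_{\triv \subseteq \calfin})$ together with the Baum-Connes Conjecture. Under this hypothesis, $\eub{G}$ can be realized as the pushout of the $G$-CW-diagram with $\coprod_{(M)} G\times_{N_GM} EN_GM$ mapping to $EG$ and to $\coprod_{(M)} G\times_{N_GM} p_M^*EW_GM$, since $p_M^*EW_GM$ serves as a model for $\eub{N_GM}$ (because $W_GM$ is torsion-free, every finite subgroup of $N_GM$ lies in $M$). Passing to $G$-orbits yields an analogous pushout of spaces with corners $\coprod BN_GM$, $BG$, $\coprod BW_GM$ and $\bub{G}$, using $N_GM\backslash p_M^*EW_GM = W_GM \backslash EW_GM = BW_GM$.

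Applying $K^G_*$ to the first pushout and $K_*$ to the second produces two Mayer--Vietoris long exact sequences, and the induction-to-the-trivial-group map $\ind_{G \to \{1\}}$ provides a morphism between them. Baum-Connes for $G$, for each $N_GM$, and for each $W_GM$ (the last giving $K_i(C^*_r(W_GM)) \cong K_i(BW_GM)$) identifies the top sequence as
\[
\cdots \to \bigoplus_{(M)} K_i(BN_GM) \to K_i(BG)\oplus \bigoplus_{(M)} K_i(C^*_r(N_GM)) \to K_i(C^*_r(G)) \to \cdots
\]
and the bottom as the analogous sequence with $K_i(C^*_r(W_GM))$ in place of $K_i(C^*_r(N_GM))$ and $K_i(\bub{G})$ in place of $K_i(C^*_r(G))$. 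The vertical comparison is the identity on the $K_i(BN_GM)$ and $K_i(BG)$ factors, is $(p_M)_*$ from $K_i(C^*_r(N_GM))$ to $K_i(C^*_r(W_GM))$, and is $\omega_i$ in the rightmost position.

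The crucial algebraic input is that $(p_M)_*$ is split surjective. The trivial $M$-representation provides a canonical $W_GM$-equivariant splitting $\IZ \to R_{\IC}(M) = K_0(C^*_r(M))$ of the augmentation; this lifts, via the equivariant Atiyah--Hirzebruch spectral sequence or equivalently via an explicit $C^*$-algebraic section of the quotient $C^*_r(N_GM) \to C^*_r(W_GM)$, to a splitting of $(p_M)_*$ on all $K$-groups. Given this splitting, the cofiber of the morphism of Mayer--Vietoris sequences of spectra reduces, corner by corner, to $\bigoplus_{(M)} \Sigma \ker((p_M)_*)$, and passing to homotopy groups produces the long exact sequence of part (i). The connecting map $\partial_i$ automatically lands in $\bigoplus \ker((p_M)_{i-1})$, and the map $\bigoplus i_M$ coming from the cofiber sequence is identified with the inclusion-induced map from $N_GM$ to $G$ by the naturality of the Mayer--Vietoris construction.

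For part (ii), after inverting the orders of all finite subgroups of $G$, the map $K_*(BN_GM) \to K_*(BW_GM)$ becomes an isomorphism: the Serre spectral sequence of the fibration $BM \to BN_GM \to BW_GM$ collapses once $\widetilde{K}_*(BM)$ is killed by inverting $|M|$. The lower Mayer--Vietoris then forces $\eta_i \colon K_i(BG) \to K_i(\bub{G})$ to be an isomorphism after inversion, so combined with the long exact sequence of part (i) the map $\eta_i \oplus \bigoplus_{(M)} i_M$ is bijective after inverting orders, and in particular $\omega_i$ is split surjective after inversion. The main obstacle will be the integral split surjectivity of $(p_M)_*$ when the extension $1 \to M \to N_GM \to W_GM \to 1$ fails to split as a semi-direct product; lifting the trivial-representation section from the $E^2$-page to the abutment requires the natural section to be compatible with the differentials of the spectral sequence, which does hold but must be verified carefully.
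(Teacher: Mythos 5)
Your strategy is essentially the paper's: the L\"uck--Weiermann $G$-pushout for $\eub{G}$ and its quotient pushout for $\bub{G}$, the comparison of the two Mayer--Vietoris sequences via $\ind_{G\to\{1\}}$, the Baum--Connes identifications (using that $p_M^*EW_GM$ is a model for $\eub{N_GM}$ and $EW_GM$ one for $\eub{W_GM}$), and the deduction of part~(ii) from part~(i) together with the fact that $K_i(BG)\to K_i(\bub{G})$ becomes bijective after inverting the orders of finite subgroups (your derivation of that last fact from the lower Mayer--Vietoris sequence is a legitimate variant of the paper's chain-level argument in Lemma~\ref{lem:K_i(eubG)_to_K_i(bub(G))_is_rationally_split_injective}).

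The one step you leave genuinely open is the one you flag yourself: the (split) surjectivity of $(p_M)_*\colon K_i(C^*_r(N_GM))\to K_i(C^*_r(W_GM))$, equivalently of $\ind_{N_GM\to\{1\}}\colon K_i^{N_GM}(p_M^*EW_GM)\to K_i(BW_GM)$. Your main route --- lifting the trivial-representation section through the equivariant Atiyah--Hirzebruch spectral sequence --- is precisely the argument that would have to be carried out and is not; asserting that the section is compatible with the differentials is not a proof. The paper sidesteps this entirely with Lemma~\ref{lem:ind_split_surjective}: for an epimorphism $p\colon G\to H$ with finite kernel, restriction $\res_p$ is a transformation of equivariant homology theories, and the composite $\ind_p\circ\res_p$ followed by the natural $H$-homeomorphism $\ind_p\res_p(X)\cong X$ is the identity, so induction is split surjective with a natural, explicit splitting; no spectral sequence and no splitting of $1\to M\to N_GM\to W_GM\to 1$ is needed. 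Your fallback via a ``$C^*$-algebraic section'' can in fact be made precise: since $M$ is normal in $N_GM$, the element $\frac{1}{|M|}\sum_{m\in M}m$ is a central projection in $C^*_r(N_GM)$ cutting out a direct summand isomorphic to $C^*_r(W_GM)$, and $(p_M)_*$ is the induced projection onto that summand; but this needs to be said rather than deferred. Note finally that for splicing the two Mayer--Vietoris sequences into the long exact sequence of part~(i), plain surjectivity of $(p_M)_*$ already suffices by a direct diagram chase, so the splitting is a convenience rather than a necessity there.
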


The rest of this section is devoted to the proof of 
Theorem~\ref{the:long_exact_sequence_for_K_ast(Cast(G))}

\begin{proof}[Proof of Theorem~\ref{the:long_exact_sequence_for_K_ast(Cast(G))}]
We have a $G$-pushout of $G$-$CW$-complexes
(see~\cite[Corollary~2.10]{Lueck-Weiermann(2007)})
\begin{eqnarray}
\xymatrix{
\coprod_{(M) \in \calm} G \times_{N_{G} M}  EN_{G}M 
\ar[r]^-{i} 
\ar[d]_{\coprod_{(M) \in \calm} \id_G \times_{N_{G}M} f_M}
& EG \ar[d]^f
\\
\coprod_{(M) \in \calm} G \times_{N_{G}M}  p_M^* EW_{G}M 
\ar[r]_-{j} 
& \eub{G}
}
\label{Delta-pushout_for_eub(G)}
\end{eqnarray}
where $f_M \colon EN_{G}M \to p_M^*EW_{G}M$ is some cellular $N_{G}M$-map,
$f \colon EG \to \eub{G}$ is some $G$-map, and $i$ and $j$ are
inclusions of $G$-$CW$-complexes.

Dividing out the $G$-action yields
a pushout of $CW$-complexes
\begin{eqnarray}
\xymatrix{
\coprod_{(M) \in \calm} BN_{G}M 
\ar[r]
\ar[d]
& BG \ar[d]^f
\\
\coprod_{(M) \in \calm} BW_{G}M 
\ar[r]
& \bub{G}
}
\label{pushout_for_bub(G)}
\end{eqnarray}
The associated Mayer-Vietoris sequences yield a commutative diagram
whose columns are exact and whose horizontal arrows are given by
induction with the projection~$G \to \{1\}$
\begin{eqnarray}
\xymatrix{
\vdots \ar[d] 
& \vdots \ar[d] 
\\
\bigoplus_{(M) \in \calm} K_i^{N_GM}(EN_{G}M)  \ar[d] \ar[r]
&
\bigoplus_{(M) \in \calm} K_i(BN_{G}M)  \ar[d] 
\\
K_i^{G}(EG) \oplus 
\bigoplus_{(M) \in \calm} K_i^{N_{G}M}(p_M^* EW_{G}M)  \ar[d] \ar[r]  
&
K_i(BG) \oplus  \bigoplus_{(M) \in \calm} K_i(BW_{G}M) \ar[d] 
\\
K_i^G(\eub{G})  \ar[d] \ar[r]
&
K_i(\bub{G})  \ar[d]
\\
\bigoplus_{(M) \in \calm} K_{i-1}^{N_GM}(EN_{G}M)  \ar[d] \ar[r]
&
\bigoplus_{(M) \in \calm} K_{i-1}(BN_{G}M)  \ar[d] 
\\
K_{i-1}^{G}(EG) \oplus 
\bigoplus_{(M) \in \calm} K_{i-1}^{N_{G}M}(p_M^* EW_{G}M)  \ar[d] \ar[r]  
&
K_{i-1}(BG) \oplus  \bigoplus_{(M) \in \calm} K_{i-1}(BW_{G}M) \ar[d] 
\\
\vdots & \vdots
}
\label{commutative_diagram_coming_from_MV-sequences}
\end{eqnarray}

Since $G$ acts freely on $EG$ and $N_{G}M$ acts freely
on $EN_{G}M$, the maps given by induction with the projection to the trivial group
\begin{eqnarray*}
  K_i^G(EG) & \xrightarrow{\cong} & K_i(BG);
  \\
  K_i^{N_{G}M}(EN_{G}M) & \xrightarrow{\cong} & K_i(BN_{G}M),
\end{eqnarray*}
are bijective for all $i \in \IZ$ and $(M) \in \calm$.

Define the map
\[
k_i'  \colon \bigoplus_{(M) \in \calm} K_i^{N_{G}M}(p_M^* EW_{G}M)
\to 
K_i^{G}(\eub{G}) \oplus \bigoplus_{(M) \in \calm} K_i(BW_{G}M)
\]
to be the product of the map
\[
\bigoplus_{(M) \in \calm} K_i^{G}(u_M) \circ \ind_{N_GM \to G} \colon
K_i^{N_GM}(p_M^* EW_{G}M) \to K_i^G(\eub{G})
\]
for the up to $G$-homotopy unique $G$-map 
$u_M \colon G\times_{N_GM} p_M^* EW_GM \to \eub{G}$
and the map
\[
\bigoplus_{(M) \in \calm} \ind_{N_GM \to \{1\}} \colon 
\bigoplus_{(M) \in \calm} K_i^{N_{G}M}(p_M^* EW_{G}M) 
\to \bigoplus_{(M) \in \calm} K_i(BW_{G}M).
\]

Define the map
\[
j_i' \colon K_i^{G}(\eub{G}) \oplus \bigoplus_{(M) \in \calm} K_i(BW_{G}M)
  \to
  K_i(\bub{G})
\]
to be the direct sum of the map
\[
\ind_{G \to \{1\}} \colon K_i^{G}(\eub{G}) \to K_i(\bub{G})
\]
and $(-1)$-times the map
\[
\bigoplus_{(M) \in \calm} 
K_i(G \backslash u_M) \colon \bigoplus_{(M) \in \calm} K_i(BW_{G}M) 
\to K_i(\bub{G}).
\]
Define the map
\[
\partial_i' \colon K_i(\bub{G}) 
\to \bigoplus_{(M) \in \calm} K_{i-1}^{N_{G}M}(p_M^* EW_{G}M)
\]
to be the composite of the three maps
\begin{multline*}
K_i(\bub{G}) \xrightarrow{\delta_i} \bigoplus_{(M) \in \calm} K_{i-1}(BN_{G}M) 
\xrightarrow{\bigoplus_{(M) \in \calm} \bigl(\ind_{N_GM \to \{1\}}\bigr)^{-1}}
\bigoplus_{(M) \in \calm} K_{i-1}^{N_GM}(EN_{G}M) 
\\
\xrightarrow{\bigoplus_{(M) \in \calm} K_{i-1}(f_M)}
\bigoplus_{(M) \in \calm} K_{i-1}^{N_{G}M}(p_M^* EW_{G}M),
\end{multline*}
where $\delta_i$ is the boundary map appearing in the right column of
the diagram~\eqref{commutative_diagram_coming_from_MV-sequences} and
$f_M \colon EN_GM \to p_M^*EW_GM$ is the up to $N_GM$-homotopy
unique $N_GM$-map.

The exact columns in the diagram~\eqref{commutative_diagram_coming_from_MV-sequences} 
can be spliced together to the following long exact sequence.
\begin{multline}
  \cdots \xrightarrow{\partial_{i+1}'}
  \bigoplus_{(M) \in \calm} K_i^{N_{G}M}(p_M^* EW_{G}M)
  \xrightarrow{k_i'}
  K_i^{G}(\eub{G}) \oplus \bigoplus_{(M) \in \calm}
  K_i(BW_{G}M)
  \\
  \xrightarrow{j_i'}
  K_i(\bub{G})
    \xrightarrow{\partial_i'} 
  \bigoplus_{(M) \in \calm} K_{i-1}^{N_{G}M}(p_M^* EW_{G}M)
  \\
  \xrightarrow{k_{i-1}'}
  K_{i-1}^{G}(\eub{G}) \oplus \bigoplus_{(M) \in \calm} K_{i-1}(BW_{G}M) 
  \xrightarrow{j_{i-1}'} \cdots.
  \label{version_of_long_exact_sequence}
\end{multline}

We will need the following lemmas.

\begin{lemma} \label{lem:K_i(eubG)_to_K_i(bub(G))_is_rationally_split_injective}
Consider any group $G$ and any $i \in \IZ$. Let
$f \colon EG \to \eub{G}$ be the up to $G$-homotopy unique $G$-map.
Denote by $\overline{f} \colon BG \to \bub{G}$ the induced map on the
$G$-quotients. Then  the composite 
\[
K_i(BG) \xrightarrow{(\ind_{G \to \{1\}})^{-1}}
K_i^G(EG) \xrightarrow{f_*} K_i^G(\eub{G}) 
\xrightarrow{\ind_{G \to \{1\}}} K_i(\bub{G})
\]  
agrees with the map
\[
\overline{f}_* \colon K_i(BG) \to K_i(\bub{G}).
\]
This map is bijective after inverting the order of all finite subgroups of $G$.
\end{lemma}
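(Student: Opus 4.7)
[Proof plan for Lemma~\ref{lem:K_i(eubG)_to_K_i(bub(G))_is_rationally_split_injective}]
The first assertion is a naturality statement. The induction map $\ind_{G \to \{1\}}$ is a natural transformation from equivariant $K$-homology to the $K$-homology of orbit spaces, so applying it to the $G$-map $f \colon EG \to \eub{G}$ yields a commutative square
\[
\xymatrix{
K_i^G(EG) \ar[r]^{f_*} \ar[d]_{\ind_{G \to \{1\}}}^{\cong} & K_i^G(\eub{G}) \ar[d]^{\ind_{G \to \{1\}}} \\
K_i(BG) \ar[r]^{\overline{f}_*} & K_i(\bub{G})
}
\]
The left vertical arrow is an isomorphism because $G$ acts freely on $EG$, so $EG/G = BG$ and induction is an iso on $K$-homology of free $G$-$CW$-complexes. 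Composing the top row and the right vertical arrow with the inverse of the left vertical arrow therefore equals the bottom arrow $\overline{f}_*$, which proves the first claim.

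For the second assertion I plan to use a Borel-construction model of $\overline{f}$. The space $EG \times \eub{G}$ is contractible and carries a free diagonal $G$-action (the action on the first factor is free), hence it is a model for $EG$, and $EG \times_G \eub{G}$ is a model for $BG$. Projection to the second factor gives a map $EG \times_G \eub{G} \to \bub{G}$ which is a model for $\overline{f}$ and is a fibration whose fiber over an orbit $[x] \in \bub{G}$ is $EG/G_x = BG_x$, where $G_x$ is a finite subgroup of $G$.

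The strategy is then to compare the Atiyah--Hirzebruch spectral sequences of $BG$ and $\bub{G}$ through the Serre spectral sequence of this fibration. For any finite group $H$, Atiyah's theorem identifies $K^0(BH)$ with the $I$-adic completion of $R_\IC(H)$, and after inverting $|H|$ this reduces to $\IZ[1/|H|]$, while $K^1(BH)\otimes \IZ[1/|H|]$ vanishes; equivalently, $H_*(BH; \IZ[1/|H|]) = \IZ[1/|H|]$ concentrated in degree zero. Letting $S$ be the multiplicative set generated by the orders of all finite subgroups of $G$, each fiber $BG_x$ therefore has trivial reduced $K$-theory after inverting $S$, so the Serre spectral sequence collapses to an isomorphism $K_i(BG)[S^{-1}] \xrightarrow{\cong} K_i(\bub{G})[S^{-1}]$.

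The main obstacle I anticipate is handling the local coefficient system in the Serre spectral sequence, since $\bub{G}$ need not be simply connected and the fibers $BG_x$ with varying $G_x$ do not form a constant system; however, since each fiber has trivial reduced $K$-theory after inverting $S$ regardless of the action of $\pi_1(\bub{G})$, the comparison of Atiyah--Hirzebruch spectral sequences still yields the claim. If preferred, one can bypass this entirely by invoking the rational computation of $K_*$ for proper $G$-$CW$-complexes via L\"uck's equivariant Chern character, which identifies both $K_i(BG)[S^{-1}]$ and $K_i(\bub{G})[S^{-1}]$ with the same Bredon-type homology of $\eub{G}$ and identifies $\overline{f}_*$ with the identity.
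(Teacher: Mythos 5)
Your proof of the first assertion is exactly the paper's: the naturality square for $\ind_{G\to\{1\}}$ plus the fact that induction is an isomorphism on the free $G$-$CW$-complex $EG$. For the bijectivity statement you take a genuinely different route. The paper reduces, via the Atiyah--Hirzebruch comparison, to showing that $\overline{f}_*\colon H_l(BG;\Lambda)\to H_l(\bub{G};\Lambda)$ is an isomorphism, and proves this purely algebraically: $C_*(f)\otimes_{\IZ}\Lambda$ is a homology equivalence between \emph{projective} $\Lambda G$-chain complexes (projectivity of $C_*(\eub{G})\otimes\Lambda$ is where the inverted isotropy orders enter, since $\Lambda[G/H]$ is $\Lambda G$-projective when $|H|$ is invertible), hence a $\Lambda G$-chain homotopy equivalence, hence an isomorphism after passing to $G$-quotients. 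You instead argue geometrically through the Borel construction $EG\times_G\eub{G}\to\bub{G}$ with point-preimages $BG_x$ and the vanishing of reduced $\Lambda$-homology of $BG_x$. Both work; the paper's argument is shorter and sidesteps all point-set issues, while yours is more conceptual about \emph{why} the statement holds. Two caveats on your version. First, $EG\times_G\eub{G}\to\bub{G}$ is not a fibration (the homotopy type of the preimage $BG_x$ jumps with the orbit type), so the Serre spectral sequence is not literally available; you need the Leray spectral sequence of this map, or an induction over the equivariant cells of $\eub{G}$ comparing $BH\times D^d$ with $D^d$ via Mayer--Vietoris --- you flag the local-coefficient issue but not this one. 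Second, your parenthetical that $K^0(BH)$ "reduces to $\IZ[1/|H|]$ after inverting $|H|$" is false: completion does not commute with localization, and e.g. $\widetilde{K}^0(B\IZ/2)\otimes\IZ[1/2]\cong\IQ_2\neq 0$. Fortunately the statement you actually use is the homological one, $\widetilde{H}_*(BH;\IZ[1/|H|])=0$, which is correct and is all that the $K$-homology comparison requires. Your fallback via the equivariant Chern character is also a legitimate complete argument.
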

\begin{proof}
Because induction is natural, we obtain the commutative diagram
\[
\xymatrix@C=22mm{K_i^G(EG) \ar[r]^{\ind_{G \to \{1\}}} \ar[d]^{f_*} 
& 
K_i(BG) \ar[d]^{(G\backslash f)_* = \overline{f}_*} 
\\
K_i^G(\eub{G}) \ar[r]^{\ind_{G \to \{1\}}} 
&
K_i(\bub{G})}
\]

The upper horizontal arrow is bijective since $G$ acts freely on $EG$.
It remains to show that the right vertical arrow is bijective after
inverting the orders of all finite subgroups of $G$.

Let $\Lambda$ be the ring $\IZ \subseteq \Lambda \subseteq \IQ$
obtained from $\IZ$ by inverting the orders of all finite subgroups of
$G$.  By a spectral sequence argument it suffices to show that
$\overline{f}_* \colon H_l(BG,\Lambda) \to H_l(\bub{G};\Lambda)$ is
bijective for all $l \in \IZ$. This follows from the fact that the
$\Lambda G$-chain map 
$C_*(f) \colon C_*(EG) \otimes_{\IZ} \Lambda \to C_*(\eub{G}) \otimes_{\IZ} \Lambda$ 
is a homology equivalence of
projective $\Lambda G$-chain complexes and hence a $\Lambda G$-chain
homotopy equivalence.
\end{proof}

Let $p \colon G \to H$ be an epimorphism with finite kernel.  Then
restriction defines for every $H$-$CW$-complex $X$ and every $i\in \IZ$ 
a natural map
\[
\res_p(X)  \colon K_i^H(X) \to K_i^G(\res_p(X))
\]
where $\res_p(X)$ is the $G$-$CW$-complex whose underlying space is
$X$ and for which $g \in G$ acts on $X$ by multiplication with
$p(g)$. This follows from the methods developed in~\cite[Chapter~6 on pages 732ff]{Lueck-Reich(2005)}. 
The maps $\res_p(X)$ above define a transformation
of $H$-homology theories. Notice that we obtain for every
$H$-$CW$-complex a natural $H$-homeomorphism
\[
\alpha(X) \colon \ind_p \res_p(X) \xrightarrow{\cong} X
\]
that is the adjoint of the identity $\id \colon \res_p(X) \to \res_p(X)$.  
Explicitly it sends $(h,x) \in H \times_G \res_p(X)$ to
$h \cdot x$.  The inverse sends $x$ to $(1,x)$.
We leave it to the reader to check the proof of the next lemma which is just a direct inspection of the definitions.

\begin{lemma} \label{lem:ind_split_surjective}
Let $p \colon G \to H$ be an epimorphism with finite kernel.
Then for every $H$-$CW$-complex $X$ the composite
\[
K_i^H(X) \xrightarrow{\res_p} K_i^G(\res_p(X)) \xrightarrow{\ind_p} 
K_i^H(\ind_p \circ \res_p(X)) \xrightarrow{K_i^H(\alpha(X))} K_i^H(X)
\]
is the identity.
\end{lemma}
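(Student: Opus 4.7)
The plan is to show that the composite in question coincides with $K_i^H$ applied to the identity $H$-self-map of $X$, so the result follows from functoriality. Concretely, I would first recall from the construction of induction and restriction on equivariant $K$-homology in \cite[Chapter~6]{Lueck-Reich(2005)} that $\ind_p$ and $\res_p$ are the homotopy-level shadows of the adjoint pair of space-level functors $\ind_p(Y) = H \times_G Y$ and $\res_p$, and that this adjunction comes with unit $\eta_Y(y) = [1,y]$ and counit $\alpha(X)([h,x]) = h \cdot x$ at the space level.

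The core of the proof is the identification
\[
\ind_p \circ \res_p \;=\; K_i^H\bigl(\alpha(X)^{-1}\bigr) \colon K_i^H(X) \to K_i^H(\ind_p \res_p X),
\]
where $\alpha(X)^{-1} \colon X \to \ind_p\res_p X$ is the $H$-map $x \mapsto [1,x]$. I would establish this by unwinding the definitions: restriction along $p$ on the spectrum-level model simply relabels the $H$-action as a $G$-action, and subsequent induction takes the underlying space $X$ to $H \times_G X$ with structure maps induced by $y \mapsto [1,y]$. The composite of these two functors on pointed $G$-$CW$-spectra agrees with the functor $X \mapsto H \times_G X$ applied to $H$-spaces, and the induced transformation on equivariant $K$-homology is precisely $K_i^H$ of the unit $\alpha(X)^{-1}$. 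This is the only point where one must be careful, but it is essentially formal once the definitions of $\ind_p$ and $\res_p$ from \cite{Lueck-Reich(2005)} are in hand.

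Once the displayed identity is available, the lemma is immediate:
\[
K_i^H(\alpha(X)) \circ \ind_p \circ \res_p \;=\; K_i^H(\alpha(X)) \circ K_i^H\bigl(\alpha(X)^{-1}\bigr) \;=\; K_i^H\bigl(\alpha(X) \circ \alpha(X)^{-1}\bigr) \;=\; K_i^H(\id_X) \;=\; \id,
\]
using functoriality of $K_i^H$ in $H$-maps.

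The main obstacle is the identification in the display above, i.e., matching the abstract induction/restriction on $K$-theory with the space-level adjunction unit. This is a bookkeeping exercise: one must check that at each stage (from the spectra to the Or-spectra model to equivariant homology theories as in \cite[Chapter~6]{Lueck-Reich(2005)}) the counit $\alpha$ on spaces corresponds to the unit $1 \mapsto [1,-]$ when one traverses $\res_p$ followed by $\ind_p$. Since the authors remark that the lemma is "just a direct inspection of the definitions", no deeper input is needed.
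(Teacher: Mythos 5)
Your argument is correct and is essentially the paper's intended one: the paper gives no written proof, stating only that the lemma is ``just a direct inspection of the definitions'', and your reduction to the identity $\ind_p\circ\res_p = K_i^H(\alpha(X)^{-1})$ followed by functoriality of $K_i^H$ is exactly that inspection, organized via the unit/counit of the space-level adjunction $\ind_p \dashv \res_p$. The one point genuinely requiring care is the one you correctly isolate --- that the composite of the two structure maps is induced by the $H$-map $x \mapsto [1,x]$ inverse to $\alpha(X)$ --- and this does follow by unwinding the constructions of $\res_p$ and the induction structure in \cite[Chapter~6]{Lueck-Reich(2005)}.
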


Lemma~\ref{lem:ind_split_surjective} applied to the projection
$N_GM \to W_GM$ and the $W_GM$-space $EW_GM$ implies that 
$\ind_{N_GM \to W_GM} \colon K_i^{N_GM}(p_M^*EW_GM) \to  K_i^{W_GM}(EW_GM)$
is split surjective. Since 
$\ind_{W_GM \to \{1\}} \colon  K_i^{W_GM}(EW_GM) \to K_i(BW_GM)$
is bijective, the map
\[
\ind_{N_GM \to \{1\}} \colon K_i^{N_GM}(p_M^*EW_GM) \to K_i(BW_GM)
\]
is split surjective. Now 
assertion~\ref{the:long_exact_sequence_for_K_ast(Cast(G)):exact_sequence}
of Theorem~\ref{the:long_exact_sequence_for_K_ast(Cast(G))}
follows from the long exact sequence~\eqref{version_of_long_exact_sequence}
and the fact that $EW_GM$ is a model for $\eub W_GM$ 
and $p_M^*EW_GM$ is a model for $\eub N_GM$ 
because $W_GM$ is the torsion-free quotient of $N_G M$ by a finite group.
We obtain assertion~\ref{the:long_exact_sequence_for_K_ast(Cast(G)):split} 
from assertion~\ref{the:long_exact_sequence_for_K_ast(Cast(G)):exact_sequence} 
and Lemma~\ref{lem:K_i(eubG)_to_K_i(bub(G))_is_rationally_split_injective}.
This finishes the proof of Theorem~\ref{the:long_exact_sequence_for_K_ast(Cast(G))}.
\end{proof}

\begin{example}[Groups satisfying property $(M_{\triv \subseteq
    \calfin})$]
  \label{exa:Groups_satisfying_property_(M)}
  Consider an extension of groups 
  $1 \to \IZ^r \xrightarrow{j} G \xrightarrow{q} A \to 1$
  such that $A$ is finitely generated
  abelian. Suppose that $\tors(A)$ acts freely on $\IZ^r$ outside the
  origin $0 \in \IZ^r$.  

  Then $G$ satisfies condition 
  $(M_{\triv\subseteq \calfin})$ by the following argument. Put 
  $H = q^{-1}(\tors(A))$. We obtain an exact sequence 
  $1 \to \IZ^r  \to H \to \tors(A) \to 1$ such that the conjugation action
  of the finite group $\tors(A)$ on $\IZ^r$ is free outside the
  origin. Then $H$ satisfies by~\cite[Lemma~6.3]{Lueck-Stamm(2000)}
  the condition $(N\!M_{\triv \subseteq \calfin})$ appearing
  in~\cite[Notation~2.7]{Lueck-Weiermann(2007)}, i.e., every
  non-trivial finite subgroup of $H$ is contained in a unique maximal
  finite subgroup and $N_HM = M$ holds for any maximal finite subgroup
  $M \subseteq H$.  We also obtain an exact sequence  
  $1 \to H \to G \xrightarrow{\overline{q}} A/\tors(A) \to 1$. Hence any 
  finite subgroup of $G$ belongs to $H$. This implies that $G$ satisfies
  $(M_{\triv \subseteq \calfin})$.

  Consider any maximal finite subgroup
  $M \subseteq G$. Then $M \subseteq H$ and  have $N_GM \cap \IZ^r = \{1\}$ and 
  $N_GM \cap H = M$. Hence $q$ induces an isomorphism $N_GM \to q(N_GM)$.
  Since $A$ is abelian, $N_GM$ is abelian. We get an exact sequence 
  $1 \to M \to N_GM \to \overline{q}(N_GM) \to 1$, 
  where $\overline{q}(N_GM)$ is a finitely
  generated free abelian group. This implies that $N_GM \cong M \times W_GM$
  and $W_GM$ is a finitely generated free abelian group. Let $\widetilde{R}_{\IC}(M)$ be the kernel of the split surjection
$R_{\IC}(M) \to R_{\IC}(\{1\})$ sending the class of an $M$-representation $V$ to $\dim_{\IC}(\IC \otimes_{\IC M} V)$.
  An easy calculation shows
  \begin{eqnarray*} 
  \ker\bigl((p_M)_*\colon 
  K_i(C^*_r(N_{G}M)) \to K_i(C^*_r(W_{G}M))\bigr)
  & \cong &
  K_i(W_GM) \otimes_{\IZ} \widetilde{R}_{\IC}(M).
\end{eqnarray*}
  In particular $\ker\bigl((p_M)_*\colon 
  K_i(C^*_r(N_{G}M)) \to K_i(C^*_r(W_{G}M))\bigr)$ is torsionfree.
  Now Theorem~\ref{the:long_exact_sequence_for_K_ast(Cast(G))}
  yields for every $m \in \IZ$ a short exact sequence
  \[
  0 \to 
  \bigoplus_{(M) \in \calm} K_i(BW_{G}M) \otimes_{\IZ} \widetilde{R}_{\IC}(M) 
  \to
  K_i(C^*_r(G)) \to K_i(\bub{G}) \to 0
  \]
 which splits after inverting $|\tors(A)|$.

 A prototype for this example is $G = R \rtimes R^{\times}$ for an
 integral domain $R$ such that the underlying abelian group of $R$ is
 finitely generated free and the abelian group $R^{\times}$ is
 finitely generated, where $R^{\times}$ acts on $R$ by multiplication.
 The ring of integers in an algebraic number field is an example by Dirichlet's Unit Theorem
(see for instance~\cite[Theorem~7.4 in Chapter I on page~42]{Neukirch(1999)}).
\end{example}


\typeout{----------   Section 2: Groups satisfying condition (NM) --------}

\section{Groups satisfying condition $(N\!M_{\triv \subseteq \calfin})$}
\label{sec:Groups_satisfying_condition_(NM)}

Let $G$ be a group. In this section we will make the assumption
$(N\!M_{\triv \subseteq \calfin})$ appearing
in~\cite[Notation~2.7]{Lueck-Weiermann(2007)}, i.e., every non-trivial
finite subgroup of $G$ is contained in a unique maximal finite
subgroup and $N_GM = M$ holds for any maximal finite subgroup $M \subseteq G$.


\subsection{On the topological $K$-theory of the group $C^*$-algebra}
\label{subsec:On_the_topological_K-theory_of_the_group_Cast-algebra}
\ \\
 
Let $\widetilde{R}_{\IC}(M)$ be the kernel of the split surjection
$R_{\IC}(M) \to R_{\IC}(\{1\})$ sending the class of an $M$-representation $V$ to $\dim_{\IC}(\IC \otimes_{\IC M} V)$.
It corresponds under the identifications $R_{\IC}(M) = K_0^M(\pt)$ and $R_{\IC}(\{1\}) = K_0^{\{1\}}(\pt)$
to the homomorphism $\ind_{M \to \{1\}} \colon K_0^M(\pt)  \to K_0^{\{1\}}(\pt)$. 
Notice that $K_0(C^*_r(M)) = R_{\IC}(M)$ and $\widetilde{R}_{\IC}(M)$ are
finitely generated free abelian groups and $K_1(C^*_r(M)) = 0$ for every
finite group $M$. 

The next result is a direct consequence of 
Theorem~\ref{the:long_exact_sequence_for_K_ast(Cast(G))}.

\begin{corollary}
\label{cor:long_exact_sequence_for_K_ast(Cast(G))_for_G_satisfying_NM}
Suppose that $G$ satisfies condition $(N\!M_{\triv \subseteq \calfin})$ appearing
in~\cite[Notation~2.7]{Lueck-Weiermann(2007)}, i.e., every non-trivial
finite subgroup of $G$ is contained in a unique maximal finite
subgroup and for every maximal finite subgroup $M$ we have $N_GM = M$.
Suppose that $G$ satisfies the Baum-Connes Conjecture.

\begin{enumerate}

\item \label{cor:long_exact_sequence_for_K_ast(Cast(G))_for_G_satisfying_NM:K1}
We obtain an isomorphism
\[
 \omega_1 \colon K_{1}(C^*_r(G)) \xrightarrow{\cong} K_1(\bub{G});
\]
\item \label{cor:long_exact_sequence_for_K_ast(Cast(G))_for_G_satisfying_NM:K0}
We obtain a short exact sequence
\[
0 \to \bigoplus_{(M) \in \calm} \widetilde{R}_{\IC}(M) 
\xrightarrow{\bigoplus_{(M) \in \calm} i_M }   K_{0}(C^*_r(G))   
\xrightarrow{\omega_0}  K_{0}(\bub{G}) \to 0,
\]
where the map $i_M$ comes from the
inclusion $M \to G$ and the identification $R_{\IC}(M) = K_0(C^*_r(M))$.
It splits if one inverts
the orders of all finite subgroups of $G$.

The homomorphisms
\begin{align*}
\eta_0 \oplus \bigoplus_{(M) \in \calm} i_M  \colon 
K_0(BG) \oplus \bigoplus_{(M) \in \calm} \widetilde{R}_{\IC}(M)  
& \to 
K_0(C^*_r(G))
\\
\eta_1  \colon 
K_1(BG)
& \to 
K_1(C^*_r(G))
\end{align*}
are bijective after inverting the orders of all finite subgroups of $G$.
\end{enumerate}
\end{corollary}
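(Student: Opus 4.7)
The plan is to specialize Theorem~\ref{the:long_exact_sequence_for_K_ast(Cast(G))} to the $(N\!M_{\triv \subseteq \calfin})$ setting. Since for every maximal finite $M \subseteq G$ we have $N_GM = M$, the Weyl group $W_GM = N_GM/M$ is trivial. The Baum-Connes Conjecture holds trivially for the trivial group and for the finite group $M = N_GM$, so the hypotheses of Theorem~\ref{the:long_exact_sequence_for_K_ast(Cast(G))} are satisfied.

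Next I would identify the kernel terms appearing in the long exact sequence. Since $K_0(C^*_r(W_GM)) = K_0(\IC) = \IZ$ and $K_1(C^*_r(W_GM)) = 0$, while $K_0(C^*_r(M)) = R_{\IC}(M)$ and $K_1(C^*_r(M)) = 0$, the map $(p_M)_*$ in degree zero corresponds, under the identification $K_0(C^*_r(M)) = K_0^M(\pt) = R_{\IC}(M)$, to the augmentation $R_{\IC}(M) \to \IZ$, $[V] \mapsto \dim_{\IC}(\IC \otimes_{\IC M} V)$, whose kernel is by definition $\widetilde{R}_{\IC}(M)$. In degree one both groups vanish, so the kernel is zero. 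Plugging this into the long exact sequence of Theorem~\ref{the:long_exact_sequence_for_K_ast(Cast(G))}~\ref{the:long_exact_sequence_for_K_ast(Cast(G)):exact_sequence} and using Bott periodicity, one obtains the six-term exact sequence
\[
0 \to K_1(C^*_r(G)) \xrightarrow{\omega_1} K_1(\bub{G}) \xrightarrow{\partial} \bigoplus_{(M) \in \calm} \widetilde{R}_{\IC}(M) \xrightarrow{\oplus i_M} K_0(C^*_r(G)) \xrightarrow{\omega_0} K_0(\bub{G}) \to 0.
\]

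The main obstacle is to show that the connecting map $\partial$ vanishes; this is the whole content beyond Theorem~\ref{the:long_exact_sequence_for_K_ast(Cast(G))}. The idea is to feed back Theorem~\ref{the:long_exact_sequence_for_K_ast(Cast(G))}~\ref{the:long_exact_sequence_for_K_ast(Cast(G)):split}: $\omega_1$ is split surjective after inverting the orders of all finite subgroups of $G$, so its cokernel is torsion, annihilated by a suitable product of those orders. But the cokernel of $\omega_1$ equals $\im(\partial)$, which sits inside $\bigoplus_{(M) \in \calm} \widetilde{R}_{\IC}(M)$. Since each $R_{\IC}(M)$ is a finitely generated free abelian group and $\widetilde{R}_{\IC}(M)$ is a direct summand of $R_{\IC}(M)$, this target is torsion-free. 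Therefore $\im(\partial) = 0$, which simultaneously gives that $\omega_1$ is an isomorphism (assertion~\ref{cor:long_exact_sequence_for_K_ast(Cast(G))_for_G_satisfying_NM:K1}) and that the displayed short exact sequence in assertion~\ref{cor:long_exact_sequence_for_K_ast(Cast(G))_for_G_satisfying_NM:K0} holds.

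The remaining statements are then immediate: the claim that $\bigoplus_{(M)} i_M$ splits after inverting the orders of all finite subgroups follows from Theorem~\ref{the:long_exact_sequence_for_K_ast(Cast(G))}~\ref{the:long_exact_sequence_for_K_ast(Cast(G)):split} applied in the same diagram, and the bijectivity of $\eta_0 \oplus \bigoplus_{(M)} i_M$ and $\eta_1$ after inverting those orders is just a direct transcription of Theorem~\ref{the:long_exact_sequence_for_K_ast(Cast(G))}~\ref{the:long_exact_sequence_for_K_ast(Cast(G)):split} under the identifications made above, using that $K_1(BW_GM)$-contributions collapse because $W_GM = \{1\}$ forces $\ker((p_M)_*)$ to reduce to $\widetilde{R}_{\IC}(M)$ in degree zero and to vanish in degree one.
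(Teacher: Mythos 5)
Your proposal is correct and follows exactly the route the paper intends: the paper derives this corollary as a ``direct consequence'' of Theorem~\ref{the:long_exact_sequence_for_K_ast(Cast(G))} by specializing to $W_GM=\{1\}$, identifying $\ker((p_M)_*)$ with $\widetilde{R}_{\IC}(M)$ in degree zero and with $0$ in degree one. Your explicit argument that the connecting map vanishes --- its image is the cokernel of $\omega_1$, which is torsion by Theorem~\ref{the:long_exact_sequence_for_K_ast(Cast(G))}~\ref{the:long_exact_sequence_for_K_ast(Cast(G)):split} yet lands in the torsion-free group $\bigoplus_{(M)}\widetilde{R}_{\IC}(M)$ --- correctly supplies the one detail the paper leaves implicit.
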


\typeout{----------   Section 3: The group cohomology of classifying spaces  ------------------------}

\section{The cohomology of $\Gamma$ and of the associated toroidal orbifold quotient}
\label{sec:The_cohomology_of_classifying_spaces}

In this section we compute the cohomology of $B\Gamma$  and $\bub{\Gamma}$
which is also called the associated toroidal orbifold quotient. We need some preliminaries.

\begin{lemma}
\label{lem:extension_splits}
Let $1 \to \IZ^n \to   \Gamma \to  \IZ/m \to 1$ be an extension such that the conjugation action of
$\IZ/m$ on $\IZ^n$ is free outside the origin. Then the extensions splits, the group $\Gamma$ is a crystallographic
group of rank $n$
and possesses a finite $n$-dimensional $\Gamma$-$CW$-model for $\eub{\Gamma}$.
\end{lemma}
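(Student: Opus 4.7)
The plan is to handle the three assertions in turn, all of which reduce quickly to the hypothesis that the $\IZ/m$-action on $\IZ^n$ is free outside the origin. For the splitting, pick any preimage $g \in \Gamma$ of a generator of $\IZ/m$. Then $g^m \in \IZ^n$ and $g^m$ commutes with $g$, so $g^m$ is a fixed point for the conjugation action of $\IZ/m$ on $\IZ^n$. Freeness outside the origin forces $g^m = 0$, so $g$ has order exactly $m$ and $\langle g \rangle \cong \IZ/m$ provides a complement, yielding $\Gamma \cong \IZ^n \rtimes \IZ/m$.

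For the crystallographic structure, I would average any inner product on $\IR^n$ over the (faithful) $\IZ/m$-action to obtain an invariant inner product; with respect to this inner product $\IZ/m$ acts by orthogonal transformations on $\IR^n$. Then $\Gamma$ embeds as a discrete subgroup of $\Isom(\IR^n) = \IR^n \rtimes O(n)$ with compact quotient. To verify crystallographicity of rank $n$ in the Bieberbach sense I would check that $\IZ^n$ is maximal abelian in $\Gamma$: a strictly larger abelian subgroup would contain a non-trivial element of $\IZ/m$ centralizing all of $\IZ^n$, contradicting freeness outside the origin.

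For the final assertion, the resulting $\Gamma$-action on $\IR^n$ is proper (discrete subgroup of isometries), $\IR^n$ is contractible, and for any finite subgroup $H \subseteq \Gamma$ the fixed set $(\IR^n)^H$ is a non-empty affine subspace (non-emptiness by the standard center-of-mass argument applied to any $H$-orbit), hence contractible. This confirms that $\IR^n$ is a model for $\eub{\Gamma}$. To upgrade to a finite $n$-dimensional $\Gamma$-$CW$ model I would invoke Illman's equivariant triangulation theorem to fix a $\IZ/m$-equivariant smooth triangulation of the quotient torus $T^n = \IZ^n \backslash \IR^n$, and pull it back along the covering $\IR^n \to T^n$ to obtain a cocompact $\Gamma$-$CW$ structure on $\IR^n$ of dimension $n$.

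The only step requiring more than direct inspection is the existence of the finite equivariant $CW$ model; every other step is either a one-line group-theoretic observation or a direct averaging argument. This last step can also be handled elementarily by exhibiting an explicit $\IZ/m$-invariant polyhedral fundamental domain for $\IZ^n$ in $\IR^n$ (e.g.\ the Dirichlet domain for the $\IZ/m$-invariant lattice $\IZ^n$) and subdividing it $\IZ/m$-equivariantly.
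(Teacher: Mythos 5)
Your proof is correct, and the first assertion (the splitting) is argued exactly as in the paper: the preimage $\gamma$ of a generator satisfies $\gamma^m\in\IZ^n$, is fixed under conjugation by $\gamma$, and freeness outside the origin forces $\gamma^m=1$. For the remaining two assertions you take a genuinely different and more self-contained route. The paper merely observes that $\IZ^n$ is normal and equal to its own centralizer in $\Gamma$ (which is the standard algebraic characterization of a crystallographic group and is essentially your ``maximal abelian'' check) and then cites Connolly--Ko\'zniewski, Proposition~1.12, for both the crystallographic structure and the finite $n$-dimensional $\Gamma$-$CW$-model $\IR^n$ for $\eub{\Gamma}$. You instead construct everything by hand: averaging an inner product to embed $\Gamma$ into $\Isom(\IR^n)$ as a discrete cocompact subgroup, verifying that fixed sets $(\IR^n)^H$ are non-empty affine subspaces so that $\IR^n$ is a model for $\eub{\Gamma}$, and invoking Illman's equivariant triangulation of the $\IZ/m$-manifold $T^n$ (lifted along $\IR^n\to T^n$) to produce the finite $\Gamma$-$CW$ structure. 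Your version buys independence from the Connolly--Ko\'zniewski reference at the cost of importing Illman's theorem as a black box; the paper's version is shorter but less explicit. One small point worth making explicit in your write-up: the embedding $\Gamma\hookrightarrow\Isom(\IR^n)$ is injective because the conjugation action of $\IZ/m$ on $\IZ^n$ is faithful, which again follows from freeness outside the origin.
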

\begin{proof}
  Let $\gamma \in \Gamma$ be an element in $\Gamma$ which is mapped under
  $\Gamma \to \IZ/m$ to a generator of $\IZ/m$.  Then $\gamma^m$ belongs to
  $\IZ^n$ and $\gamma \gamma^m \gamma^{-1} = \gamma^m$. Since $\gamma$ is
  non-trivial and the conjugation action of $\IZ/m$ on $\IZ^n$ is free outside
  the origin, $\gamma^m$ is the origin in $\IZ^n$.  This implies $\gamma^m = 1$
  in $\Gamma$.

  The subgroup $\IZ^n$ of $\Gamma$ is normal and its own centralizer in
  $\Gamma$.  Hence $\Gamma$ is a crystallographic group of rank $n$ and has a
  finite $n$-dimensional $G$-$CW$-model for $\eub{\Gamma}$, namely $\IR^n$ with
  the associated $\Gamma$-action, by~\cite[Propositions~1.12]{Connolly-Kozniewski(1990)}.
\end{proof}

One key ingredient for the sequel is the following result from Langer-L\"uck~\cite[Theorem~0.1 and Theorem~0.5]{Langer-Lueck(2011_homology)}.

\begin{theorem}[Tate cohomology]
\label{the:Tate_cohomology}
Suppose that  the $\IZ/m$-action on $\IZ^n$ is free outside the origin.
Then:

\begin{enumerate}

\item \label{the:Tate_cohomology:Tate} 
We get for the Tate cohomology 
\[
\widehat{H}^i(\IZ/m;\Lambda^j(\IZ^n_{\rho})) = 0
\]
for all $i,j$ for which $i + j$ is odd;

\item \label{the:Tate_cohomology:LSS} 
The Lyndon-Serre sequence associated to the extension
 $1 \to \IZ^n \to   \Gamma \to  \IZ/m \to 1$ collapses in the strongest sense, i.e., all
differentials in the $E_r$-term are trivial for all $r \ge 2$, and all extension
problems at the $E_{\infty}$-level are trivial. 
\end{enumerate}
\end{theorem}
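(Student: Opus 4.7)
The plan is to establish (i) first and then derive (ii) from it via an analysis of the $E_2$-page of the Lyndon-Hochschild-Serre spectral sequence.

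For (i), I would first reduce to the case where $m$ is a prime power. Since $\IZ/m$ is cyclic, for every prime $p \mid m$ the unique Sylow $p$-subgroup $P \subseteq \IZ/m$ detects the $p$-primary component of $\widehat{H}^*(\IZ/m;-)$ via restriction (because $\cor\circ \res = [\IZ/m:P]$ is a unit on the $p$-primary part), and the free-outside-origin hypothesis passes to $P$. So assume $m = p^r$. Combining the hypothesis with the classification of integral representations of a cyclic $p$-group (Diederichsen-Reiner-Heller) restricts the indecomposable $\IZ[\IZ/p^r]$-lattice summands of $\IZ^n_\rho$; in particular the trivial lattice is forbidden. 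For each admissible indecomposable summand I would compute the Tate cohomology of its exterior powers explicitly, either via a Koszul-type resolution or by recognizing the exterior power as a standard lattice whose Tate cohomology is known.

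The main obstacle is the interaction of exterior powers with direct sums: $\Lambda^j(V\oplus W) = \bigoplus_i \Lambda^i V \otimes_{\IZ} \Lambda^{j-i} W$ with diagonal $\IZ[\IZ/m]$-action. To propagate the parity vanishing from each indecomposable summand to the whole module I would use that a tensor product with a cohomologically trivial factor remains cohomologically trivial, and that the total degree $i + j$ in the Koszul summand behaves additively. The delicate point is that not every admissible indecomposable lattice is $\IZ[\IZ/p^r]$-free, so the parity vanishing must be verified case by case on the classification list; for $r \ge 2$ this list is genuinely involved and is where the hardest combinatorial work lies.

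For (ii), the $E_2$-page of the Lyndon-Hochschild-Serre spectral sequence reads $E_2^{p,q} = H^p(\IZ/m; H^q(\IZ^n;\IZ)) = H^p(\IZ/m; \Lambda^q(\IZ^n)^*)$. For $p \ge 1$ this is Tate cohomology, and since dualization is $\IZ/m$-equivariant and preserves the free-outside-origin property (the transpose of an invertible operator over $\IQ$ is invertible), (i) applied to $(\IZ^n)^*$ gives $E_2^{p,q} = 0$ whenever $p + q$ is odd and $p \ge 1$. Differentials $d_r$ for $r \ge 2$ shift total degree by one, so they must have either source or target in the odd-parity vanishing region with $p \ge 2$; the remaining candidates originate in the edge column $p = 0$ and can be eliminated by running the parallel argument for the homological LHS spectral sequence with coefficients $\Lambda^q \IZ^n$, where (i) applies directly. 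The triviality of the extension problems at $E_\infty$ then follows once one checks that the associated graded pieces are torsion-free, which in turn is extracted from the explicit Tate cohomology computation in (i).
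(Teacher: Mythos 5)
This theorem is not proved in the paper at all: it is quoted verbatim from the companion paper \cite{Langer-Lueck(2011_homology)}, where its proof is the main content, so your attempt has to be measured against that. Your Sylow reduction is correct and standard, and reducing to lattices related to $\IZ[\zeta_{p^r}]$ is the right first move, but not via the Diederichsen--Reiner--Heller classification: for $r\ge 3$ the ring $\IZ[\IZ/p^r]$ has infinite representation type, so ``verify case by case on the classification list'' is not an available strategy. What actually happens is much cleaner: freeness outside the origin forces $L\otimes_{\IZ}\IQ\cong\IQ[\zeta_{p^r}]^k$, and since Tate cohomology of a $p$-group only sees $L_{(p)}$ and $\IZ_{(p)}[\zeta_{p^r}]$ is a discrete valuation ring, one gets $L_{(p)}\cong\IZ_{(p)}[\zeta_{p^r}]^k$ --- a single indecomposable, no list. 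The genuine gap comes next: you assert that $\widehat{H}^*(\IZ/p^r;\Lambda^j\IZ[\zeta_{p^r}])$ is ``known'' or follows from a Koszul-type resolution, and that the parity of the nonvanishing degree ``behaves additively'' across the decomposition $\Lambda^j(V\oplus W)=\bigoplus_i\Lambda^iV\otimes\Lambda^{j-i}W$. There is no K\"unneth isomorphism for the Tate cohomology of a tensor product of lattices, so this additivity is precisely the statement that has to be proved. In \cite{Langer-Lueck(2011_homology)} it is obtained by constructing, for each $j$, an explicit length-$j$ resolution of $\Lambda^j\IZ[\zeta_{p^r}]$ by free modules whose kernel is a permutation-type module $\bigoplus_\sigma\Gamma^j\IZ[G/G_1]_\sigma$ with Tate cohomology concentrated in even degrees; dimension shifting then converts the exterior degree $j$ into the cohomological degree shift that produces the parity statement, and the construction is compatible with tensor products. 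This is the bulk of that paper and is entirely absent from your outline.

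Part~\ref{the:Tate_cohomology:LSS} does not follow from part~\ref{the:Tate_cohomology:Tate} in the way you claim. The parity vanishing kills every differential except those with source in the column $p=0$ and $q$ odd: there the source $E_r^{0,q}\subseteq(\Lambda^q((\IZ^n)^*))^{\IZ/m}$ need not vanish (for $m=3$, $n=6$ one has $s_1=2$), and the target $E_r^{r,q-r+1}$ has even total degree $q+1$, hence lies outside the vanishing region and is in general a nontrivial finite group. Your proposed repair via the homological Lyndon--Hochschild--Serre spectral sequence has the mirror-image defect: there the parity argument leaves alive exactly the differentials \emph{landing} in the column $p=0$, so nothing is gained by switching variance. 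Killing the differentials out of $E^{0,*}$ amounts to proving that the restriction $H^q(\Gamma)\to H^q(\IZ^n)^{\IZ/m}$ is surjective, which is essentially the Adem--Ge--Pan--Petrosyan conjecture and requires a genuinely new argument (transfer only shows the cokernel is $m$-torsion, and the invariants form a free abelian group, so this does not suffice). Finally, your criterion for the extension problems is false on its face: the filtration quotients $E_\infty^{p,q}$ for $p\ge 1$ are finite groups annihilated by $m$, so the associated graded is very far from torsion-free, and the splitting of the filtration must be established by other means.
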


The main result of this section is

\begin{theorem}[Cohomology of $B\Gamma$ and $\bub{\Gamma}$]
\label{the:Cohomology_of_BGamma_and_bub(Gamma)}
  Consider the  extension of groups
  $1 \to \IZ^n \to   \Gamma \xrightarrow{\pi}  \IZ/m \to 1$. Assume  that the 
  conjugation action  of $\IZ/m$ on   $\IZ^n$ is free outside the origin  $0 \in \IZ^n$. 
 Put for $i \ge 0$
 \[
 r_i := \rk_{\IZ}((\Lambda^{i} \IZ^n_{\rho})^{\IZ/m}) = \rk_{\IZ}(H^i(\IZ^n_{\rho})^{\IZ/m}).
 \]

Then
  
  \begin{enumerate}

  \item \label{the:Cohomology_of_BGamma_and_bub(Gamma):Hm(BGamma)} 
 For $i \ge 0$
\[
H^i(\Gamma) \cong 
\begin{cases}
  \IZ^{r_i} \oplus \bigoplus_{l = 0}^{i-1} \widehat{H}^{i-l}(\IZ/m,\Lambda^l\IZ^n_{\rho}) & i \;\text{even;}
  \\
  \IZ^{r_i} & i \;\text{odd}, i \ge 3;
  \\ 0 & i = 1.\end{cases}
\]

\item \label{the:Cohomology_of_BGamma_and_bub(Gamma):restriction_to-p-subgr} The map induced
by the various inclusions
\[
 \varphi^{i} \colon {H}^{i}(\Gamma)  \to \bigoplus_{(M) \in \calm} H^{i}(M)
\]
is bijective for $i > n$;

\item
  \label{the:Cohomology_of_BGamma_and_bub(Gamma):bub(Gamma)}
  For $i \ge 0$
\[
H^i(\bub{\Gamma}) \cong 
\begin{cases}
  \IZ^{r_i} & i \;\text{even;}
  \\
  \IZ^{r_i} \oplus  \bigoplus_{l = i}^{n} \widehat{H}^{l}(\IZ/m,\Lambda^{l}\IZ^n_{\rho})  & i \;\text{odd}, i \ge 3;
  \\
  0 & i = 1.
\end{cases}
\]

\end{enumerate}

\end{theorem}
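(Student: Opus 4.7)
My plan is to prove the three parts in turn, using the Lyndon--Hochschild--Serre (LHS) spectral sequence of the extension $1 \to \IZ^n \to \Gamma \to \IZ/m \to 1$ as the main engine for (i) and the $G$-pushout from Section~\ref{sec:Groups_satisfying_condition_(M)} as the main engine for (ii) and (iii).

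For (i), consider the LHS spectral sequence $E_2^{p,q} = H^p(\IZ/m;\Lambda^q\IZ^n_\rho)$. Theorem~\ref{the:Tate_cohomology} says it collapses with all extension problems trivial, so $H^i(\Gamma) = \bigoplus_{p+q=i}E_2^{p,q}$, and the Tate contributions $\widehat{H}^{i-l}(\IZ/m;\Lambda^l\IZ^n_\rho)$ vanish for $i$ odd. The $p=0$ row contributes $(\Lambda^i\IZ^n_\rho)^{\IZ/m}$, a free abelian group of rank $r_i$. For odd $i \ge 3$ only this free part survives; for even $i$ one obtains the full sum claimed; for $i=1$ the rank $r_1 = \rk_{\IZ}((\IZ^n_\rho)^{\IZ/m}) = 0$ because freeness of the $\IZ/m$-action outside the origin forces $(\IZ^n_\rho)^{\IZ/m}=0$.

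For (ii), Example~\ref{exa:Groups_satisfying_property_(M)} shows that $\Gamma$ satisfies condition $(N\!M_{\triv\subseteq\calfin})$, so $N_\Gamma M = M$ and $W_\Gamma M = 1$ for every $M \in \calm$. Specialising the $G$-pushout from the proof of Theorem~\ref{the:long_exact_sequence_for_K_ast(Cast(G))} and passing to $\Gamma$-orbits produces a pushout of $CW$-complexes with $\coprod_{(M)\in\calm} BM \to B\Gamma$ on top and $\coprod_{(M)\in\calm} \pt \to \bub{\Gamma}$ on the bottom. The Mayer--Vietoris sequence in cohomology reads, for $i \ge 1$,
\[
\cdots \to \bigoplus_{(M)\in\calm} H^{i-1}(M) \to H^i(\bub{\Gamma}) \to H^i(\Gamma) \xrightarrow{\varphi^i} \bigoplus_{(M)\in\calm} H^i(M) \to H^{i+1}(\bub{\Gamma}) \to \cdots.
\]
By Lemma~\ref{lem:extension_splits}, $\eub{\Gamma}=\IR^n$ is an $n$-dimensional $\Gamma$-$CW$-model, so the compact quotient $\bub{\Gamma}$ is an $n$-dimensional $CW$-complex and $H^i(\bub{\Gamma}) = 0$ for $i > n$. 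The sequence then forces $\varphi^i$ to be an isomorphism for $i > n$, proving (ii).

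For (iii), read off $H^i(\bub{\Gamma})$ from the same MV sequence. If $i \ge 2$ is even, $H^{i-1}(M) = 0$ for finite cyclic $M$ gives $H^i(\bub{\Gamma}) \cong \ker\varphi^i$; if $i \ge 3$ is odd, $H^i(M) = 0$ gives a short exact sequence $0 \to \cok\varphi^{i-1} \to H^i(\bub{\Gamma}) \to \IZ^{r_i} \to 0$ that splits because $H^i(\Gamma) \cong \IZ^{r_i}$ is free. The remaining task is to identify $\ker\varphi^i$ (even $i$) and $\cok\varphi^{i-1}$ (odd $i$). I factor $\varphi^i|_M$ through the intermediate split subextension $\Gamma_{M'} := \IZ^n \rtimes M'$, where $M' := \pi(M) \subseteq \IZ/m$ is isomorphic to $M$ because $M \cap \IZ^n = \{1\}$. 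Since $M' \le \IZ/m$ inherits the freeness-outside-origin property, Theorem~\ref{the:Tate_cohomology} also applies to $\Gamma_{M'}$, so the restriction $H^i(\Gamma) \to H^i(\Gamma_{M'})$ acts as the group-cohomology restriction $H^p(\IZ/m;\Lambda^q\IZ^n_\rho) \to H^p(M';\Lambda^q\IZ^n_\rho)$ on each $E_2$-summand; the section restriction $H^i(\Gamma_{M'}) \to H^i(M)$ can be arranged, by choosing an appropriate splitting of the collapsed LHS spectral sequence of $\Gamma_{M'}$, to annihilate the $q \ge 1$ summands and to be the standard restriction $H^i(M') \to H^i(M)$ on the $q=0$ summand, yielding the predicted $\ker\varphi^i = \IZ^{r_i}$ and $\cok\varphi^{i-1} = \bigoplus_{l=i}^n \widehat{H}^l(\IZ/m;\Lambda^l\IZ^n_\rho)$. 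The main obstacle I anticipate is the bookkeeping around these non-canonical splittings: the splitting that makes the section restriction kill the $q \ge 1$ part depends on $M$, and distinct conjugacy classes in $\calm$ may force mutually incompatible choices; the rigidity provided by (ii) --- that $\varphi^i$ is already an isomorphism for $i > n$, together with the explicit order of $\bigoplus_{(M)\in\calm} H^i(M) = \bigoplus_M \IZ/|M|$ --- should pin down the identifications by rank and torsion comparisons.
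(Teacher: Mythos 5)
Your arguments for parts~(i) and~(ii), and your reduction of part~(iii) to computing $\ker\varphi^{2i}$ and $\cok\varphi^{2i}$ in the Mayer--Vietoris sequence~\eqref{long_exact_cohomology_sequences_for_bub(Gamma)_BGamma)} (with the extension in odd degrees splitting because $H^{2i+1}(\Gamma)$ is free), coincide with the paper's proof.

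The computation of $\ker\varphi^{2i}$ and $\cok\varphi^{2i}$ itself contains a genuine error. You claim that, for a suitable choice of splittings, the composite $H^{2i}(\Gamma)\to H^{2i}(\Gamma_{M'})\to H^{2i}(M)$ annihilates all $E_\infty^{p,q}$-summands with $q\ge 1$ and is the standard restriction on $E_\infty^{2i,0}=H^{2i}(\IZ/m)$. If that were true, the image of $\varphi^{2i}$ would be contained in the image of the cyclic group $H^{2i}(\IZ/m)\cong\IZ/m$ under the sum of the restrictions $H^{2i}(\IZ/m)\to H^{2i}(M)$, hence would be cyclic of order dividing $m$. This contradicts your own part~(ii): for $2i>n$ the map $\varphi^{2i}$ is onto $\bigoplus_{(M)\in\calm}H^{2i}(M)\cong\bigoplus_{(M)\in\calm}\IZ/|M|$, whose order $\prod_{(M)\in\calm}|M|$ exceeds $m$ as soon as $|\calm|\ge 2$ --- already for $\Gamma=D_\infty=\IZ\rtimes\IZ/2$ the map $\varphi^{2i}\colon H^{2i}(D_\infty)\to\IZ/2\oplus\IZ/2$ is bijective while the inflation summand only hits the diagonal. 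So no choice of splittings can have the property you need, and the ``rank and torsion comparisons'' you invoke cannot repair this: knowing the orders of kernel and cokernel does not show that $\ker\varphi^{2i}$ is torsion-free, which is precisely the content of the claim $H^{2i}(\bub{\Gamma})\cong\IZ^{r_{2i}}$.

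What must be shown, and what the paper proves, is that $\varphi^{2i}$ is injective on the whole torsion subgroup $F^{1,2i-1}$ of $H^{2i}(\Gamma)$. The needed input is the multiplicative structure of the Lyndon--Serre spectral sequence: cup product with $\pi^*(\beta)^n$, for $\beta$ a generator of $H^2(\IZ/m)$, maps $H^{2i}(\Gamma)$ into the bottom filtration step $F^{2n,2i}$ of $H^{2i+2n}(\Gamma)$ and is bijective on $F^{1,2i-1}$ (Five Lemma plus the periodicity isomorphisms $-\cup\beta^n$ on Tate cohomology, as in diagram~\eqref{filtration_commute}); the commutative diagram~\eqref{com_diagram_beta} together with part~(ii) in degree $2i+2n>n$ then identifies $\ker\varphi^{2i}$ with $\ker(-\cup\pi^*(\beta)^n)$, which by the Snake Lemma and~\eqref{exact_sequence_for_K2i} is a finite-index subgroup of the free group $(\Lambda^{2i}\IZ^n_{\rho})^{\IZ/m}$; finally $\cok\varphi^{2i}\cong\cok\bigl(\iota\colon F^{2n,2i}\to H^{2i+2n}(\Gamma)\bigr)$ is read off from the filtration. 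You would need to supply an argument of this kind, or some other device controlling $\varphi^{2i}$ on the torsion, before part~(iii) is proved.
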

\begin{proof}~\ref{the:Cohomology_of_BGamma_and_bub(Gamma):Hm(BGamma)} 
follows directly from Theorem~\ref{the:Tate_cohomology} since $r_1 = 0$.
\\[1mm]~\ref{the:Cohomology_of_BGamma_and_bub(Gamma):restriction_to-p-subgr}
The pushout~\eqref{pushout_for_bub(G)} reduces to the following
pushout of $CW$-complexes since $\Gamma$ satisfies the 
condition $(N\!M_{\triv \subseteq \calfin})$ by~\cite[Lemma~6.3]{Lueck-Stamm(2000)}.
\begin{eqnarray}
\xymatrix{
\coprod_{(M) \in \calm} BM 
\ar[r]
\ar[d]
& B\Gamma \ar[d]^f
\\
\coprod_{(M) \in \calm} \pt 
\ar[r]
& \bub{\Gamma}
}
\label{pushout_for_bub(G)_NM}
\end{eqnarray}
 Since $H^{2i+1}(M) = 0$ for all $i$ and all $(M) \in \calm$,
the associated Mayer-Vietoris sequence yields the long exact sequence
\begin{multline}
  0 \to {H}^{2i}(\bub{\Gamma}) \xrightarrow{\overline{f}^*} {H}^{2i}(\Gamma)
  \xrightarrow{\varphi^{2i}} \bigoplus_{(M) \in \calm} \widetilde{H}^{2i}(M)
  \\
  \xrightarrow{\delta^{2i}} {H}^{2i+1}(\bub{\Gamma})
  \xrightarrow{\overline{f}^*} {H}^{2i+1}(\Gamma) \to 0
  \label{long_exact_cohomology_sequences_for_bub(Gamma)_BGamma)}
\end{multline}
where $\varphi^{2i}$ is the map induced by the various inclusions $M \to \Gamma$.
Since there exists a $n$-dimensional model for $\eub{\Gamma}$ by Lemma~\ref{lem:extension_splits}, 
assertion~\ref{the:Cohomology_of_BGamma_and_bub(Gamma):restriction_to-p-subgr}
follows.
\\[1mm]~\ref{the:Cohomology_of_BGamma_and_bub(Gamma):bub(Gamma)}
In the sequel let $i$ be an integer with $i \ge 1$.
Recall that the Lyndon-Serre spectral sequence  associated to the extension
 $1 \to \IZ^n \to   \Gamma \to  \IZ/m \to 1$ yields a descending filtration
$$H^{l}(\Gamma) = F^{0,l} \supset F^{1,l-1} \supset  \cdots \supset F^{l,0} \supset
F^{l+1,-1} =0$$ such that $F^{r,l-r}/F^{r+1,l-r-1} \cong
E^{r,l-r}_{\infty}$.  Let $\beta \in H^2(\IZ/m)$ be a fixed generator. 
Recall that $E^{2,0}_2 = H^2(\IZ/p;H^0(\IZ^n_{\rho})) =
H^2(\IZ/p)$ so that we can think of $\beta$ as an element in $E^{2,0}_2$.
We conclude $E^{i,j}_2 = E^{i,j}_{\infty}$ from
Theorem~\ref{the:Tate_cohomology}~\ref{the:Tate_cohomology:LSS}.
From the multiplicative structure of the spectral sequence we see that the image of the map
$$- \cup \pi^*(\beta)^{n} \colon H^{2i}(\Gamma)  \to H^{2i+2n}(\Gamma)$$
lies in $F^{2n,2i}$ and the following diagram commutes
\begin{equation}
  \label{filtration_commute}
  \xymatrix@!C=8em{0 \ar[d] 
    &
    0 \ar[d] 
    \\
    F^{1,2i-1} \ar[r]_{-\cup \pi^{*}(\beta)^{n}}^{\cong} \ar[d]
    & 
    F^{2n+1,2i-1} \ar[d]
    \\        
    H^{2i}(\Gamma) \ar[r]_{-\cup \pi^{*}(\beta)^{n}} \ar[d]
    & 
    F^{2n,2i} \ar[d]
    \\
    E_{\infty}^{0,2i} \ar[r]_{-\cup \beta^{n}} \ar[d]
    & 
    E_{\infty}^{2n,2i} \ar[d]
    \\
    0 
    &
    0
  }
\end{equation}
where the columns are exact. 

Next we show that the upper horizontal arrow is bijective.  Namely,
we prove by induction over $r = -1, 0, 1, \ldots, 2i-1$ that the map
\[
-\cup \pi^{*}(\beta)^{n} \colon F^{2i-r,r} \to F^{2i-r+2n,r}
\]
is bijective. The induction beginning $r = -1$ is trivial since then both the
source and the target are trivial, and the induction step from $r-1$ to $r$
follows from the Five-Lemma and the fact that the map
\[
- \cup \beta^n \colon E^{2i-r,r}_{\infty} = H^{2i-r}(\IZ/m;H^r(\IZ^n_{\rho})) \to 
E^{2i-r+2n,r}_{\infty} = H^{2i-r+2n}(\IZ/m;H^r(\IZ^n_{\rho}))
\]
 is bijective for $1 \le 2i -r$

The bottom horizontal map in diagram~\eqref{filtration_commute} can be
identified with the composition of the canonical quotient map
\[
(\Lambda^{2i} \IZ^n_{\rho})^{\IZ/m} = H^0(\IZ/m;H^{2i}(\IZ^n_{\rho})) \to \widehat{H}^0(\IZ/m;H^{2i}(\IZ^n_{\rho}))
\]
with the isomorphism
\[- \cup \beta^n \colon \widehat{H}^0(\IZ/m;H^{2i}(\IZ^n_{\rho})) \xrightarrow{\cong}
\widehat{H}^{2n}(\IZ/m;H^{2i}(\IZ^n_{\rho})).
\]
We conclude from the Snake-Lemma that the middle
map in diagram~\eqref{filtration_commute} 
is an epimorphism and that its kernel  fits into an exact sequence
\begin{multline}
\label{exact_sequence_for_K2i}
0 \to \ker\bigl(- \cup \pi^*(\beta)^{n} \colon H^{2i}(\Gamma)  \to H^{2i+2n}(\Gamma)\bigr) \to (\Lambda^{2i} \IZ^n_{\rho})^{\IZ/m} 
\\
\to \widehat{H}^0(\IZ/m;H^{2i}(\IZ^n_{\rho})) \to 0.
\end{multline}
We have the following commutative diagram
\begin{equation}
\label{com_diagram_beta}
\xymatrix{H^{2i}(\Gamma) \ar[r]^-{\varphi^{2i}} 
\ar[d]_{- \cup \pi^*(\beta)^{n}}
&
\bigoplus_{(M) \in \calm} H^{2i}(M)
\ar[dd]^{\bigoplus_{(M) \in \calm} (- \cup (\pi \circ i_M)^*\beta^n)}_{\cong}
\\
F^{2n,2i} \ar[d]_{\iota} 
& 
\\
H^{2i+2n}(\Gamma) \ar[r]^-{\varphi^{2i+2n}}_-{\cong}
&
\bigoplus_{(M) \in \calm} H^{2i+2n}(M) 
}
\end{equation}
where $\iota$ is the inclusion, the lower horizontal map is bijective because of~\eqref{long_exact_cohomology_sequences_for_bub(Gamma)_BGamma)}
and the existence of a $n$-dimensional model for $\eub{\Gamma}$ by Lemma~\ref{lem:extension_splits},
and the left vertical map is bijective since the map $H^2(\IZ/m) \to H^2(M)$ induced by the injection $\pi \circ i_M \colon M \to \IZ/m$
sends $\beta$ to a generator.

Since $ \widehat{H}^0(\IZ/m;H^{2i}(\IZ^n_{\rho}))$ is finite and 
\[
\ker\bigl(- \cup \pi^*(\beta)^{n} \colon H^{2i}(\Gamma)  \to H^{2i+2n}(\Gamma)\bigr)
= \ker(\varphi^{2i}) \cong H^{2i}(\bub{\Gamma})
\]
by the exact sequence~\eqref{long_exact_cohomology_sequences_for_bub(Gamma)_BGamma)} and the commutative diagram~\eqref{com_diagram_beta}
we conclude for $i \ge 1$
\[
H^{2i}(\bub{\Gamma}) \cong \IZ^{r_{2i}}.
\]
From exact sequence~\eqref{long_exact_cohomology_sequences_for_bub(Gamma)_BGamma)} and the commutative diagram~\eqref{com_diagram_beta}
we obtain the exact sequence
\[
0 \to \cok\bigl(\iota \colon F^{2n,2i} \to H^{2i+2n}(\Gamma)\bigr) \to H^{2i+1}(\bub{\Gamma}) \to H^{2i+1}(\Gamma) \to 0.
\]
Since $H^{2i+1}(\Gamma)  \cong \IZ^{r_{2i+1}}$, this sequence splits.
We conclude from Theorem~\ref{the:Tate_cohomology}~\ref{the:Tate_cohomology:LSS}
\begin{eqnarray*}
\cok(\iota)
& \cong &
\bigoplus_{l = 0}^{2n-1} E^{l,2i+2n-l}_{\infty}
\\
& \cong &
\bigoplus_{l = 0}^{2n-1} H^l(\IZ/m,\Lambda^{2i+2n-l}\IZ^n_{\rho})
\\
& \cong &
\bigoplus_{l = 2i+n}^{2n-1} H^l(\IZ/m,\Lambda^{2i+2n-l}\IZ^n_{\rho})
\\
& \cong &
\bigoplus_{l = 2i+1}^{n} \widehat{H}^{2i +2n - l}(\IZ/m,\Lambda^{l}\IZ^n_{\rho})
\\
& \cong &
\bigoplus_{l = 2i+1}^{n} \widehat{H}^{l}(\IZ/m,\Lambda^{l}\IZ^n_{\rho}).
\end{eqnarray*}
We conclude for $i \ge 1$.
\[
H^{2i+1}(\bub{\Gamma})  \cong \IZ^{r_{2i+1}} \oplus \bigoplus_{l = 2i+1}^{n} \widehat{H}^{l}(\IZ/m,\Lambda^{l}\IZ^n_{\rho}).
\]
Obviously $H^0(\bub{\Gamma}) \cong \IZ^{r_0}$ holds. Since $H^1(\Gamma) = 0$ 
by assertion~\ref{the:Cohomology_of_BGamma_and_bub(Gamma):Hm(BGamma)} , we conclude $H^1(\bub{\Gamma}) = 0$
from the exact sequence~\eqref{long_exact_cohomology_sequences_for_bub(Gamma)_BGamma)}.
This finishes the proof of Theorem~\ref{the:Cohomology_of_BGamma_and_bub(Gamma)}.
\end{proof}

For a computation of the cohomology of $\Gamma$ and $\bub{\Gamma}$ in the case where
$m$ is a prime and the conjugation action of $\IZ/m$ on $\IZ^n$ is not required to be free outside the origin,
we refer to~\cite{Adem-Ge-Pan-Petrosyan(2008)} and~\cite{Adem-Duman-Gomez(2010)}.

\begin{remark}[Homology] 
\label{rem:homology}
By the universal coefficient theorems one can figure out the homology as well.
It is easier to determine the cohomology because of the multiplicative structure coming from the cup product.
\end{remark}

\typeout{----------   Section 4: Topological $K$-theory of classifying spaces ------------------------}

\section{Topological $K$-theory of classifying spaces}
\label{sec:Topological_K-theory_of_classifying_spaces}


\subsection{Comparing $K^1$ for classifying spaces}
\label{subsec:Comparing_K1(BG)_and_K1(bubB)}
\ \\

For later purpose we prove

\begin{lemma} \label{lem:K1(eub(Gamma)_to_K(BGamma)_iso_if_MN}
Suppose that the group $G$ satisfies the condition
$(N\!M_{\triv \subseteq \calfin})$ appearing
in~\cite[Notation~2.7]{Lueck-Weiermann(2007)} and that there exists a finite $G$-$CW$-model for $\eub{G}$.

Then the canonical map
\[
K_{G}^1(\eub{G}) \to K^1(BG)
\]
is bijective.
\end{lemma}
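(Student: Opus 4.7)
The plan is to exploit the $G$-pushout of~\cite[Corollary~2.10]{Lueck-Weiermann(2007)}, which under the hypothesis $(N\!M_{\triv \subseteq \calfin})$ simplifies because $W_GM = N_GM/M = \{1\}$ forces $p_M^*EW_GM \simeq \pt$. The pushout becomes
\[
\xymatrix{
\coprod_{(M) \in \calm} G \times_M EM \ar[r] \ar[d] & EG \ar[d]^{f} \\
\coprod_{(M) \in \calm} G/M \ar[r] & \eub{G},
}
\]
and the canonical map in question is the one induced on equivariant $K^1$ by $f \colon EG \to \eub{G}$. For finite $M$ the Atiyah--Segal completion theorem yields $K^0(BM) \cong R_{\IC}(M)^{\wedge}_{I}$ and $K^1(BM) = 0$, while $K^0_G(G/M) = R_{\IC}(M)$ and $K^1_G(G/M) = 0$. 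Feeding this into the equivariant Mayer--Vietoris sequence associated with the pushout, the odd-degree contributions from the two corner terms vanish and one obtains the exact sequence
\[
K^0_G(\eub{G}) \to K^0(BG) \oplus \bigoplus_{(M)} R_{\IC}(M) \xrightarrow{\beta} \bigoplus_{(M)} R_{\IC}(M)^{\wedge}_{I} \to K^1_G(\eub{G}) \to K^1(BG) \to 0.
\]

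This already shows that the canonical map $K^1_G(\eub{G}) \to K^1(BG)$ is surjective, and reduces bijectivity to the surjectivity of $\beta$. The map $\beta$ sends $(x, (y_M))$ to $(\operatorname{res}_M(x) - c_M(y_M))_M$, where $\operatorname{res}_M \colon K^0(BG) \to K^0(BM)$ is restriction along $BM \to BG$ and $c_M \colon R_{\IC}(M) \to R_{\IC}(M)^{\wedge}_{I}$ is the completion map. Equivalently, the long exact sequence of the $G$-pair $(\eub{G}, EG)$ yields $K^0_G(\eub{G}, EG) = 0$ and $K^1_G(\eub{G}, EG) = \bigoplus_{(M)} R_{\IC}(M)^{\wedge}_{I}/R_{\IC}(M)$, so the surjectivity of $\beta$ becomes the surjectivity of the connecting homomorphism $K^0(BG) \to \bigoplus_{(M)} R_{\IC}(M)^{\wedge}_{I}/R_{\IC}(M)$.

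The main obstacle is precisely to establish this last surjectivity. A clean strategy is to exhibit, for each $(M) \in \calm$, a retraction $r_M \colon G \to M$ of the inclusion $M \hookrightarrow G$; then $(B r_M)^{*} \colon K^0(BM) \to K^0(BG)$ is a one-sided inverse of $\operatorname{res}_M$, which therefore becomes surjective, and $\beta$ is surjective in turn. In the intended application to $\Gamma = \IZ^n \rtimes \IZ/m$ such a retraction exists: by Example~\ref{exa:Groups_satisfying_property_(M)} the projection $q \colon \Gamma \to \IZ/m$ restricts to an isomorphism on every maximal finite subgroup $M$, and composing $q$ with the inverse of this restricted isomorphism produces the desired retraction. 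More generally, one can compare the equivariant Mayer--Vietoris sequence above with the non-equivariant one associated with the orbit-space pushout $\coprod BM \to BG$, $\coprod \pt \to \bub{G}$ and deduce the surjectivity of $\beta$ by diagram chasing in the induced map between six-term exact sequences, using that the cokernel of $\beta$ is controlled by the images of the representation rings $\widetilde{R}_{\IC}(M) = R_{\IC}(M)/\IZ$ in $R_{\IC}(M)^{\wedge}_{I}$.
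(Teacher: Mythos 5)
Your reduction is correct as far as it goes: the equivariant Mayer--Vietoris sequence of the $G$-pushout, together with $K^1_G(G/M)=0$, $K^1(BM)=0$ and $K^0_G(G\times_M EM)\cong K^0(BM)\cong R_{\IC}(M)^{\wedge}_I$, does show that the canonical map is surjective with kernel $\cok(\beta)$, so everything hinges on the surjectivity of $\beta$, equivalently of the composite $K^0(BG)\to\bigoplus_{(M)}R_{\IC}(M)^{\wedge}_I/R_{\IC}(M)$. That surjectivity is the entire content of the lemma, and neither of your two suggestions establishes it. The retraction argument fails twice over. First, a retraction $G\to M$ need not exist, not even for $\Gamma=\IZ^n\rtimes\IZ/m$: Example~\ref{exa:Groups_satisfying_property_(M)} only says that the projection maps $M$ isomorphically onto $\pi(M)\subseteq\IZ/m$, and by Theorem~\ref{the:prime_power_case} there are maximal finite subgroups with $\pi(M)$ a proper subgroup $C\subsetneq\IZ/m$ (e.g.\ $|M|=2$ inside $\IZ^n\rtimes\IZ/4$); when $[\IZ/m:C]$ is not prime to $|C|$ the inclusion $C\subseteq\IZ/m$ does not split, so no group retraction exists. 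Second, even if every $\res_M$ were split surjective, $\beta$ would not follow: to hit an element supported in the summand of $M_0$ you must find $x\in K^0(BG)$ with $\res_{M_0}(x)$ prescribed while $\res_{M}(x)$ lies in the image of the completion map $R_{\IC}(M)\to R_{\IC}(M)^{\wedge}_I$ for every other $(M)$, and the cross-restrictions give you no control over that; the image of the completion map is a proper countable subgroup of an uncountable group. Your fallback ``diagram chase'' is likewise not a proof: the cokernel of each completion map is a large divisible group, and no formal chase between the two Mayer--Vietoris sequences produces the needed surjectivity.

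The paper closes exactly this gap with two non-formal inputs. Comparing the two sequences through $K^1(\bub{G})$, it first shows that $\ker\bigl(K^1(\bub{G})\to K^1(BG)\bigr)$ is \emph{finite}, by proving that $K^1(BG)$ and $K^1(\bub{G})$ are finitely generated of the same rank; this uses the rational computation of $K^*(BG)$ from~\cite[Theorem~0.1]{Lueck(2007)} (which requires the finiteness of centralizers of torsion elements, i.e.\ condition $(N\!M_{\triv\subseteq\calfin})$) together with the rational Chern character for the finite complex $\bub{G}$. Once that kernel is annihilated by some integer $l>0$, it suffices to prove that $R_{\IC}(M)\to K^0(BM)/l\cdot K^0(BM)$ is onto, which follows from the structure theorem $K^0(BM)\cong\IZ\times\prod_p\II_p(M)\otimes_{\IZ}\IZ_p^{\wedge}$ of~\cite[Theorem~0.3]{Lueck(2007)} and the surjectivity of $\IZ\to\IZ_p^{\wedge}/l\IZ_p^{\wedge}$. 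Some argument of this kind (finiteness of a kernel plus density of $R_{\IC}(M)$ in its completion modulo $l$) is indispensable; without it your proposal does not prove the lemma.
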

\begin{proof}
Let $\overline{R}_{\IC}(M)$ be the cokernel of the homomorphism
$R_{\IC}(\{1\}) \to R_{\IC}(M)$ given by restriction with $M \to \{1\}$. 
It corresponds under the identifications $R_{\IC}(\{1\}) = K^0_{\{1\}}(\pt) $ and $R_{\IC}(M) = K^0_{M}(\pt)$ 
to the induction homomorphism $\ind_{M \to \{1\}} \colon K^0_{\{1\}}(\pt)  \to K^0_{M}(\pt)$.  
(Notice that we are using the notation of~\cite[Section~1]{Lueck(2005c)}, in~\cite{Lueck-Oliver(2001b)}
this map is called inflation.)
Define 
\[
\iota_M \colon R_{\IC}(M) = K^0_{M}(\pt) \xrightarrow{K^0_M(\pr)} K^0_M(EM) \xrightarrow{\ind_{M \to \{1\}}^{-1}} K^0(BM).
\]
Let $\widetilde{K}^0(X)$ be the cokernel of the map
$K^0(\pt) \to K^0(X)$ induced by the projection
$X \to \pt$. The map $\iota_M$ induces an homomorphism 
\[
\overline{\iota}_M \colon \overline{R}_{\IC}(M) \to \widetilde{K}^0(BM).
\]
We obtain from the Mayer-Vietoris sequences
for $K^*_G$ and $K^*$ applied to the $G$-pushout~\eqref{Delta-pushout_for_eub(G)}
and to the pushout~\eqref{pushout_for_bub(G)}
the commutative diagram with exact rows
(compare~\cite[Proof of Theorem~7.1 on page~30]{Davis-Lueck(2010)})
\[
\xymatrix{
\bigoplus_{(M) \in \calm} \overline{R}_{\IC}(M)
\ar[d]_{\bigoplus_{(M) \in \calm} \overline{\iota}_M} \ar[r]
&
K^1(\bub{G})  \ar[r] \ar[d]_{\id}
&
K^1_{G}(\eub{G}) \ar[r] \ar[d]
& 
0
\\
\bigoplus_{(M) \in \calm}  \widetilde{K}^0(BM)  \ar[r]
&
K^{1}(\bub{G}) \ar[r]
&
K^{1}(BG) \ar[r]
&
0
}
\]

By the Five-Lemma it remains to show that the obvious map
\[
\ker\bigl(K^{1}(\bub{G}) \to K^{1}_{G}(\eub{G})\bigr)
\to 
\ker\bigl(K^{1}(\bub{G}) \to K^{1}(BG)\bigr)
\]
is surjective. The group $K^1(\bub{G})$ is finitely generated since there is a
finite $G$-$CW$-model for $\eub{G}$. Hence also $K^1(BG)$ is finitely generated.
For any non-trivial element of finite order $g \in G$ its centralizer
$C_G\langle g \rangle$ is finite because the condition $(N\!M_{\triv \subseteq \calfin})$
is satisfied. Hence the rank of the finitely generated group
$K^1(BG)$ is $\sum_{j \in \IZ} \dim_{\IQ}(H^{2j + 1}(BG;\IQ))$
by~\cite[Theorem~0.1]{Lueck(2007)}.  The rank of the finitely generated group
$K^1(\bub{G})$ is $\sum_{j \in \IZ} \dim_{\IQ}(H^{2j + 1}(\bub{G};\IQ))$ since
$\bub{G}$ is finite and there exists a rational Chern character. Since the
obvious map $BG \to \bub{G}$ induces isomorphisms $H^{j}(\bub{G)};\IQ)
\xrightarrow{\cong} H^{j}(BG;\IQ)$ for all $j \in \IZ$, the ranks of the
finitely generated abelian groups $K^1(BG)$ and $K^1(\bub{G})$ agree.  Hence the
kernel of the epimorphism $K^{1}(\bub{G}) \to K^{1}(BG)$ is finite.  This implies that there
is an integer $l$ such that multiplication with $l$ annihilates the kernel.
Therefore it remains to show for every integer $l > 0$ that the obvious
composite
\begin{multline*}
\bigoplus_{(M) \in \calm} \overline{R}_{\IC}(M) 
\xrightarrow{\bigoplus_{(M) \in \calm} \overline{\iota}_M}
\bigoplus_{(M) \in \calm}\widetilde{K}^0(BM)
\\
\to
\left(\bigoplus_{(M) \in \calm} \widetilde{K}^0(BM)\right)\left/
l \cdot \left(\bigoplus_{(M) \in \calm} \widetilde{K}^0(BM)\right)\right.
\end{multline*}
is surjective. Obviously it suffices to show for every $(M) \in \calm$
that the composite
\[
R_{\IC}(M) 
\xrightarrow{\iota_M}
K^0(BM)
\to
K^0(BM)\left/
l \cdot K^0(BM)\right.
\]
is surjective.

Let $\II_M$ be the augmentation ideal, i.e., the kernel of the ring
homomorphism $R_{\IC}(M) \to \IZ$ sending $[V]$ to $\dim_{\IC}(V)$.
If $M_p \subseteq M$ is a $p$-Sylow subgroup, restriction defines a
map $\II_M \to \II_{M_p}$. Let $\II_p(M)$ be the quotient of $\II(M)$
by the kernel of this map. This is independent of the choice of the
$p$-Sylow subgroup since two $p$-Sylow subgroups of $M$ are conjugate.
There is an obvious isomorphism from $\II_p(M) \xrightarrow{\cong}
\im(\II_M \to \II_{M_p})$.

Then there is an isomorphism of abelian groups 
(see~\cite[Theorem~0.3]{Lueck(2007)})
\[
K^0(BM)  \xrightarrow{\cong} \IZ \times \prod_{p\;\text{prime}}  \II_p(M)
\otimes_{\IZ} \IZ\widehat{_p}
\]
The map $R_{\IC}(M) \xrightarrow{\iota_M} K^0(BM)$
can be identified under this isomorphism with the obvious composite
\[
R_{\IC}(M) \xrightarrow{\cong} \IZ \times \II_M \to 
\IZ \times \prod_{p\;\text{prime}}  \II_p(M) 
\to
\IZ \times \prod_{p\;\text{prime}}  \II_p(M) \otimes_{\IZ} \IZ\widehat{_p}.
\]
The map 
$\IZ \times \II(M) \to \IZ \times \prod_{p\;\text{prime}}  \II_p(M)$
is surjective by~\cite[Lemma~3.4]{Lueck(2007)}. 
Hence it suffices to show that the map
\[
\IZ \times \prod_{p\;\text{prime}}  \II_p(M) 
\to
\left(\IZ \times \prod_{p\;\text{prime}}  
\II_p(M) \otimes_{\IZ} \IZ\widehat{_p}\right)
\left/l \cdot \left(\IZ \times \prod_{p\;\text{prime}}  
\II_p(M) \otimes_{\IZ} \IZ\widehat{_p}\right)\right.
\]
is surjective.   Since $\II_p(M)$ can be non-trivial
only for finitely many primes, namely those which divide $n$,
it suffices to show for every prime $p$ that the canonical map
\[
\II_p(M) \otimes_{\IZ} \IZ  \xrightarrow{\id \otimes \pr} \II_p(M) \otimes_{\IZ} 
\left(\IZ\widehat{_p}/ l \cdot \IZ\widehat{_p}\right)
\]
is onto. This follows from the fact that the composite
\[
\IZ \to \IZ\widehat{_p} \to \IZ\widehat{_p}/l \cdot \IZ\widehat{_p}
\]
is surjective. 
\end{proof}


\subsection{Comparing $K^0$ for classifying spaces and group $C^*$-algebras}
\label{subsec:Comparing_K0_for_classifying_spaces_and_group_C_ast-algebras}

Throughout this subsection we fix an extension of groups
\[
1 \to \IZ^n \to G \to F \to 1
\] 
such that $F$ is a finite group.

We prove some general results in this setting under assumption
that the conjugation action of $F$ on $\IZ^n$ is free outside the
origin $0 \in \IZ^n$.

\begin{theorem} \label{the:extension_of_Zn_by_F}
Suppose that the conjugation action of $F$ on
$\IZ^n$ is free outside the origin $0 \in \IZ^n$. Then

\begin{enumerate}

\item \label{the:extension_of_Zn_by_F:K1}
We obtain an isomorphism
\[
\omega_1 \colon K_1(C^*_r(G)) \xrightarrow{\cong} K_1(\bub{G});
\]

\item \label{the:extension_of_Zn_by_F:K0}
We obtain a short exact sequence
\[
0 \to \bigoplus_{(M) \in \calm} \widetilde{R}_{\IC}(M) 
\xrightarrow{\bigoplus_{(M) \in \calm} i_M}   K_{0}(C^*_r(G))   
\xrightarrow{\omega_0}  K_{0}(\bub{G}) \to 0.
\]
It splits if we invert $|F|$. 

Consider the map
\[
\overline{k_*} \oplus \bigoplus_{(M) \in \calm} i_M \colon 
\IZ \otimes_{\IZ F} K_0(C^*_r(\IZ^n)) \oplus \bigoplus_{M \in
  \calm}\widetilde{R}_{\IC}(M) \rightarrow K_{0}(C^*_r(G)),
\]
where $\overline{k_*}$ is the homomorphism induced by the inclusion $k
\colon \IZ^n \to G$. It becomes a bijection after inverting $|F|$.
\end{enumerate}
\end{theorem}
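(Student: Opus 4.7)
My plan is to deduce assertion~(i), the short exact sequence in~(ii), and its splitting after inverting $|F|$ directly from Corollary~\ref{cor:long_exact_sequence_for_K_ast(Cast(G))_for_G_satisfying_NM}, and then handle the final statement about $\overline{k_*} \oplus \bigoplus_{(M)} i_M$ by a separate comparison argument. First I would verify the hypotheses of Corollary~\ref{cor:long_exact_sequence_for_K_ast(Cast(G))_for_G_satisfying_NM} for $G$: the condition $(N\!M_{\triv \subseteq \calfin})$ is precisely the finite-quotient case of Example~\ref{exa:Groups_satisfying_property_(M)} (take $A = F$), and the Baum-Connes Conjecture holds because $G$ is virtually abelian, hence amenable (amenable groups are closed under finite extensions). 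Since $\IZ^n$ is torsion-free, every finite subgroup of $G$ injects into $F$, so inverting $|F|$ is equivalent to inverting the orders of all finite subgroups of $G$; the splitting statement in Corollary~\ref{cor:long_exact_sequence_for_K_ast(Cast(G))_for_G_satisfying_NM}(ii) thus yields the splitting in~(ii).

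For the bijectivity of $\overline{k_*} \oplus \bigoplus_{(M) \in \calm} i_M$ after inverting $|F|$, I would compare $\overline{k_*}$ with $\eta_0$. By naturality of the Baum-Connes assembly map applied to the inclusion $k \colon \IZ^n \hookrightarrow G$, one has
\[
k_* = \eta_0 \circ (Bk)_* \circ \asmb_{\IZ^n}^{-1} \colon K_0(C^*_r(\IZ^n)) \to K_0(C^*_r(G)),
\]
where $\asmb_{\IZ^n} \colon K_0(B\IZ^n) \xrightarrow{\cong} K_0(C^*_r(\IZ^n))$ is the (iso) Baum-Connes assembly for $\IZ^n$ and $Bk \colon B\IZ^n \to BG$ is the map on classifying spaces. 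Because inner automorphisms act trivially on $K$-theory of $C^*$-algebras, this factorization descends to $\overline{k_*} = \eta_0 \circ \beta$, where $\beta \colon \IZ \otimes_{\IZ F} K_0(C^*_r(\IZ^n)) \to K_0(BG)$ is induced by $(Bk)_* \circ \asmb_{\IZ^n}^{-1}$ on passage to $F$-coinvariants. Since Corollary~\ref{cor:long_exact_sequence_for_K_ast(Cast(G))_for_G_satisfying_NM}(ii) guarantees that $\eta_0 \oplus \bigoplus_{(M)} i_M$ is bijective after inverting $|F|$, the task reduces to showing $\beta$ is bijective after inverting $|F|$.

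To establish this, I would apply the Leray-Serre spectral sequence for topological $K$-homology associated to the fibration $B\IZ^n \to BG \to BF$. After inverting $|F|$, the group cohomology $H^p(F;N)$ vanishes for every $F$-module $N$ and every $p \geq 1$, so the spectral sequence collapses onto the column $p = 0$ and produces an isomorphism $K_0(BG)[1/|F|] \cong K_0(B\IZ^n)_F[1/|F|]$. The edge map of the spectral sequence is induced by $(Bk)_*$ and factors through $F$-coinvariants; under the Baum-Connes identification for $\IZ^n$ it coincides with $\beta$, which is therefore bijective after inverting $|F|$.

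The main technical obstacle is the bookkeeping needed to establish the two identifications above: first, that $\overline{k_*} = \eta_0 \circ \beta$ in the Davis-L\"uck framework of equivariant homology theories, and second, that $\beta$ agrees with the Leray-Serre edge map under the Baum-Connes identification. Both are naturality statements but require careful tracking of induction, assembly, and restriction maps.
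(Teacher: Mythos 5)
Your proposal is correct and takes essentially the same route as the paper: assertion~(i), the exact sequence and its splitting come from Corollary~\ref{cor:long_exact_sequence_for_K_ast(Cast(G))_for_G_satisfying_NM}, and the remaining bijectivity is reduced (you via the factorization $\overline{k_*}=\eta_0\circ\beta$, the paper equivalently via computing $\omega_0\circ\overline{k_*}$ and invoking Lemma~\ref{lem:K_i(eubG)_to_K_i(bub(G))_is_rationally_split_injective}) to the collapse after inverting $|F|$ of the Leray--Serre spectral sequence of $B\IZ^n\to BG\to BF$, which is exactly the paper's final step. The one small slip is that Example~\ref{exa:Groups_satisfying_property_(M)} assumes the quotient $A$ is abelian, whereas $F$ here is merely a finite group; the condition $(N\!M_{\triv\subseteq\calfin})$ should instead be taken directly from~\cite[Lemma~6.3]{Lueck-Stamm(2000)}, as the paper does.
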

\begin{proof}
The group $G$ satisfies by~\cite[Lemma~6.3]{Lueck-Stamm(2000)}
the condition $(N\!M_{\triv \subseteq \calfin})$ appearing 
in~\cite[Notation~2.7]{Lueck-Weiermann(2007)}. Hence the claim  follows from
Corollary~\ref{cor:long_exact_sequence_for_K_ast(Cast(G))_for_G_satisfying_NM} 
except that the map
\[
\overline{k_*} \oplus \bigoplus_{(M) \in \calm} i_M \colon 
\IZ \otimes_{\IZ F} K_0(C^*_r(\IZ^n))  \oplus 
\bigoplus_{(M) \in \calm}\widetilde{R}_{\IC}(M) 
\rightarrow   K_{0}(C^*_r(G)).
\]
becomes bijective after inverting $|F|$.
To prove this, it suffices to show that $\omega_0 \circ \overline{k_*}$ is bijective after inverting $|F|$.
Using the induction structure on equivariant $K$-theory (see \cite[page 198]{Lueck(2002b)}) one checks that
this map agrees with the composite
\[
\IZ \otimes_{\IZ F}  K_0^{\IZ^n}(E\IZ^n) \xrightarrow{u_* \circ \ind_{\IZ^n \to G}} 
K_0^G(EG) \xrightarrow{\ind_{G \to \{1\}}} K_0(BG) \xrightarrow{\overline{f}_*} K_0(\bub{G})
\] 
where 
$u \colon G \times_{\IZ^n} E\IZ^n \to EG$
is the up to $G$-homotopy unique $G$-map.
Because of  Lemma~\ref{lem:K_i(eubG)_to_K_i(bub(G))_is_rationally_split_injective}
it is enough to show  that the composite
\[
\IZ \otimes_{\IZ F}  K_0^{\IZ^n}(E\IZ^n) \xrightarrow{u_* \circ \ind_{\IZ^n \to G}} 
K_0^G(EG) \xrightarrow{\ind_{G \to \{1\}}} K_0(BG)
\] 
is bijective after inverting $|F|$.
Consider the  commutative diagram
\[
\xymatrix@C=27mm{
\IZ \otimes_{\IZ F}  K_0^{\IZ^n}(E\IZ^n)  \ar[r]^{\id_{\IZ} \otimes_{\IZ F} \ind_{\IZ^n \to \{1\}}} 
\ar[d]^{u_* \circ \ind_{\IZ^n \to G}}
&
\IZ \otimes_{\IZ F}  K_0(B\IZ^n) \ar[d]^{\widehat{k_*}}
\\
K_0^G(EG) \ar[r]^{\ind_{G \to \{1\}}} 
&
K_0(BG) 
}
\]
where $\widehat{k_*}$ is induced by the inclusion $k \colon \IZ^n \to G$. 
The upper horizontal arrow is bijective since $\IZ^n$ acts freely
on $E\IZ^n$. The lower horizontal arrow is bijective since $G$ acts
freely on $EG$.  The right vertical arrow is bijective after
inverting $|F|$ because of the Leray-Serre spectral sequence applied to
the fibration $B\IZ^n \to BG \to BF$. This finishes the proof of
Theorem~\ref{the:extension_of_Zn_by_F}.
\end{proof}


\subsection{Topological $K$-theory of classifying spaces}
\label{subsec:Topological_K-theory_of_classifying_spaces}

Throughout this subsection we consider an extension
$1 \to \IZ^n \to \Gamma \to \IZ/m \to 1$ and assume that
the $\IZ/m$-action on $\IZ^n_{\rho}$ is free outside the origin.
We want to compute $K^1(B\Gamma)$ and $K^0(\bub{\Gamma})$.
These turn out to be finitely generated free abelian groups.

We will use the  \emph{Leray-Serre spectral sequence} for
topological $K$-theory (see~\cite[Chapter~15]{Switzer(1975)}) of the
fibration $B\IZ^r \to B\Gamma \to B\IZ/m$ associated to the extension $1 \to \IZ^n \to \Gamma \to \IZ/m \to 1$.  
Recall that its
$E_2$-term is $E^{i,j}_2 = H^i(\IZ/m; K^j(B\IZ^n))$ and it converges to
$K^{i+j}(B\Gamma)$ with no $\text{lim}^1$-term since the inverse system $\{K^{l}(B\Gamma^{(n)}) \mid n \ge 0\}$
given by the inclusions of skeletons of $B\Gamma$ satisfies the Mittag-Leffler condition 
because of~\cite[Theorem~6.5]{Lueck-Oliver(2001b)}.

\begin{lemma} \label{lem:K1(BG)_n_square-free}
  Suppose that the $\IZ/m$-action on   $\IZ^r$ is free outside the origin. Then
  \begin{enumerate}
  \item \label{lem:K1(BG)_n_square-free:collaps}
  All differentials in the Leray-Serre
  spectral sequence for topological $K$-theory associated to 
  the extension are trivial;

  \item \label{lem:K1(BG)_n_square-free:K1(BGamma)}
  The canonical map
  \[
K^1(B\Gamma) \xrightarrow{\cong} K^1(B\IZ^n)^{\IZ/m}
\]
  is bijective. 
  In particular 
  \[
K^1(B\Gamma) \cong \IZ^{s_1},
\]
  where 
  \[
s_1 = \sum_{i \in \IZ} \dim_{\IQ} \left(H^{2i+1}(\IZ^n;\IQ)^{\IZ/m}\right) 
  = \sum_{i \in \IZ} \rk_{\IZ} \left(\bigl(\Lambda^{2i+1} \IZ^n\bigr)^{\IZ/m}\right).
\]
\end{enumerate}
\end{lemma}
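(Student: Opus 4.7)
The plan is to analyze the Leray-Serre spectral sequence
$E_2^{i,j} = H^i(\IZ/m; K^j(B\IZ^n)) \Rightarrow K^{i+j}(B\Gamma)$
by combining the Tate vanishing of Theorem~\ref{the:Tate_cohomology}~\ref{the:Tate_cohomology:Tate}, the Tate periodicity of cohomology of the cyclic group $\IZ/m$, and the multiplicative structure of the spectral sequence.

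First, the computation $K^*(T^n) = \Lambda^*\IZ^n$ (with generators in degree $1$) combined with Bott periodicity identifies $K^j(B\IZ^n)$ with $\bigoplus_k \Lambda^{2k + \bar j} \IZ^n_{\rho}$ as a $\IZ[\IZ/m]$-module, where $\bar j \in \{0, 1\}$ denotes $j$ mod $2$. Applying Theorem~\ref{the:Tate_cohomology}~\ref{the:Tate_cohomology:Tate} termwise yields $E_2^{i, j} = 0$ whenever $i \geq 1$ and $i + \bar j$ is odd. A direct parity check on $d_r : E_r^{i, j} \to E_r^{i+r, j - r + 1}$ shows that the source and target always have opposite parities of $i + \bar j$, so $d_r$ is zero whenever $i \geq 1$, and also whenever $i = 0$ with $j$ even. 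This reduces assertion~\ref{lem:K1(BG)_n_square-free:collaps} to the vanishing of $d_r : E_r^{0, 1} \to E_r^{r, 2 - r}$ for each $r \geq 2$.

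For this remaining case I would invoke the multiplicative structure. Let $\alpha \in H^2(\IZ/m; \IZ) \subseteq E_2^{2, 0}$ be a generator coming from the summand $\Lambda^0 \IZ^n_{\rho} = \IZ$; the same parity argument shows that every $d_s(\alpha)$ lies in a group that vanishes, so $\alpha$ is a permanent cycle. By Tate periodicity for the cyclic group $\IZ/m$, cup product with $\alpha$ yields an isomorphism $\widehat H^i(\IZ/m; M) \xrightarrow{\cong} \widehat H^{i+2}(\IZ/m; M)$ for every $\IZ[\IZ/m]$-module $M$, so on the $E_2$-page multiplication by $\alpha$ is an isomorphism $E_2^{r, 2-r} \xrightarrow{\cong} E_2^{r+2, 2-r}$ for $r \geq 1$. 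A further parity check confirms $E_r^{r, 2-r} = E_2^{r, 2-r}$ and $E_r^{r+2, 2-r} = E_2^{r+2, 2-r}$ (all intermediate differentials at these positions have source or target killed by parity), so the isomorphism descends to the $E_r$-page. Since $x \cdot \alpha \in E_r^{2, 1} = 0$ by parity, the Leibniz rule yields
\[
0 = d_r(x \cdot \alpha) = d_r(x) \cdot \alpha \pm x \cdot d_r(\alpha) = d_r(x) \cdot \alpha,
\]
and injectivity of $-\cdot \alpha$ forces $d_r(x) = 0$.

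Once the spectral sequence collapses, assertion~\ref{lem:K1(BG)_n_square-free:K1(BGamma)} follows at once: parity vanishing gives $E_\infty^{i, 1-i} = 0$ for every $i \geq 1$, so the filtration on $K^1(B\Gamma)$ is concentrated at the single term $E_\infty^{0, 1} = E_2^{0, 1} = K^1(B\IZ^n)^{\IZ/m}$, and the edge map is precisely the restriction along $\IZ^n \hookrightarrow \Gamma$. The first rank formula reads off from $K^1(B\IZ^n)^{\IZ/m} = \bigoplus_k (\Lambda^{2k+1}\IZ^n_{\rho})^{\IZ/m}$, and the alternative expression via $\dim_\IQ H^{2i+1}(\IZ^n; \IQ)^{\IZ/m}$ uses the standard identification $H^*(B\IZ^n; \IQ) = \Lambda^*(\IZ^n \otimes \IQ)$. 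The main obstacle is promoting the Tate periodicity isomorphism from the $E_2$-page up to each later page $E_r$ so that the Leibniz trick applies; this is handled by exactly the parity analysis established in the first step, which shows that the relevant bidegrees are unaffected by earlier differentials.
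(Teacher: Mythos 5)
Your argument is correct and is essentially the approach the paper intends: the paper's proof simply cites the analogous argument in Davis--L\"uck, which is exactly the parity vanishing from Theorem~\ref{the:Tate_cohomology} combined with the Leibniz rule applied to a permanent periodicity class in $E_2^{2,0}$, and your write-up supplies those details faithfully (including the check that the relevant bidegrees persist unchanged to the $E_r$-page). The only point you leave implicit, which the paper handles just before the lemma, is convergence of the spectral sequence with no $\lim^1$-term via the Mittag-Leffler condition.
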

\begin{proof}\ref{lem:K1(BG)_n_square-free:collaps} 
  The proof is analogous to the
  proof of~\cite[Lemma~3.5~(iii)]{Davis-Lueck(2010)} but now using Theorem~\ref{the:Tate_cohomology}.
  instead  of~\cite[Lemma~3.5~(ii)]{Davis-Lueck(2010)}.  
  \\[1mm]~\ref{lem:K1(BG)_n_square-free:K1(BGamma)} The argument is analogous
  to the one of the proof
  of~\cite[Theorem~3.3~(iii)]{Davis-Lueck(2010)} using
  assertion~\ref{lem:K1(BG)_n_square-free:collaps} and Theorem~\ref{the:Tate_cohomology}.
  \end{proof}

\begin{lemma}\label{lem:K0(bubG)_n_square-free}
\begin{enumerate}
\item \label{lem:K0(bubG)_n_square-free:K0(bub(G)} 
We have
\[
K^0(\bub{\Gamma}) \cong \IZ^{t_0},
\]
where
\[
t_0 = \sum_{i \in \IZ} \dim_{\IQ} \left(H^{2i}(\IZ^r;\IQ)^{\IZ/m}\right) 
= \sum_{i \in \IZ} \rk_{\IZ} \left(\bigl(\Lambda^{2i}
  \IZ^r\bigr)^{\IZ/m}\right);
\]
\item \label{lem:K0(bubG)_n_square-freeKo_G(eub(G)} 
We have
\[
K^0_{\Gamma}(\eub{\Gamma}) \cong \IZ^{s_0}.
\]
where $s_0 = t_0 + \bigl(\sum_{(M) \in \calm} (|M|-1)\bigr)$.
\end{enumerate}
\end{lemma}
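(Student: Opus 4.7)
The plan for part (i) is to apply the Atiyah-Hirzebruch spectral sequence (AHSS)
$$E_2^{p,q} = H^p(\bub{\Gamma}; K^q(\pt)) \Longrightarrow K^{p+q}(\bub{\Gamma})$$
for the finite $n$-dimensional CW-complex $\bub{\Gamma}$ (which exists by Lemma~\ref{lem:extension_splits}). Since $K^q(\pt) = \IZ$ for $q$ even and vanishes for $q$ odd, only even rows contribute, with $E_2^{p,q} = H^p(\bub{\Gamma})$. By Theorem~\ref{the:Cohomology_of_BGamma_and_bub(Gamma)}\ref{the:Cohomology_of_BGamma_and_bub(Gamma):bub(Gamma)}, the even-degree cohomology $H^{2i}(\bub{\Gamma}) \cong \IZ^{r_{2i}}$ is torsion-free, while odd-degree cohomology carries torsion. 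Differentials $d_r$ for $r$ even vanish by parity of $q$. I will show by induction on $r$ that each entry $E_r^{p,-p}$ on the $K^0$-diagonal (with $p$ even) is torsion-free of rank $r_p$. Granting torsion-freeness of $E_r^{p,-p}$, any incoming differential $d_r\colon E_r^{p-r,-p+r-1} \to E_r^{p,-p}$ has image in a torsion-free group, and the Chern character forces the AHSS to degenerate rationally, so the image is rationally zero and hence torsion; being both torsion-free and torsion, the image vanishes. Outgoing differentials have image that is rationally zero, so $\ker(d_r^{p,-p})$ retains full rank $r_p$, and $E_{r+1}^{p,-p}=\ker(d_r^{p,-p})$ is a torsion-free subgroup of $E_r^{p,-p}$. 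Thus $E_\infty^{p,-p}\cong\IZ^{r_p}$, and since every associated graded piece of the filtration of $K^0(\bub{\Gamma})$ is free abelian (hence projective), all extensions split, yielding $K^0(\bub{\Gamma})\cong\bigoplus_{p\ \text{even}}\IZ^{r_p}\cong\IZ^{t_0}$.

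For part (ii), I will use the Mayer-Vietoris sequences associated to the pushouts~\eqref{Delta-pushout_for_eub(G)} and~\eqref{pushout_for_bub(G)_NM}. The first, combined with the induction isomorphism $K^*_\Gamma(\Gamma \times_M X)\cong K^*_M(X)$, the triviality of $W_\Gamma M$ under $(N\!M_{\triv\subseteq\calfin})$, the vanishing $K^1(BM)=0$, and Lemma~\ref{lem:K1(eub(Gamma)_to_K(BGamma)_iso_if_MN}, produces the short exact sequence
$$0 \to K^0_\Gamma(\eub{\Gamma}) \to K^0(B\Gamma) \oplus \bigoplus_{(M) \in \calm} R_{\IC}(M) \xrightarrow{\psi} \bigoplus_{(M) \in \calm} K^0(BM) \to 0.$$
The second yields
$$0 \to K^0(\bub{\Gamma}) \to K^0(B\Gamma) \oplus \bigoplus_{(M) \in \calm} \IZ \xrightarrow{\phi} \bigoplus_{(M) \in \calm} K^0(BM) \to K^1(\bub{\Gamma}) \to K^1(B\Gamma) \to 0.$$
The natural inclusion $\bigoplus\IZ\hookrightarrow\bigoplus R_{\IC}(M)$ via trivial representations gives a morphism of these sequences. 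Viewing them as a short exact sequence of two-term complexes with quotient complex $[\bigoplus\widetilde{R}_{\IC}(M)\to 0]$ and passing to the long exact cohomology produces the four-term exact sequence
$$0 \to K^0(\bub{\Gamma}) \to K^0_\Gamma(\eub{\Gamma}) \to \bigoplus_{(M) \in \calm} \widetilde{R}_{\IC}(M) \xrightarrow{\delta} \cok(\phi) \to 0.$$
From the second Mayer-Vietoris, $\cok(\phi)$ is the kernel of $K^1(\bub{\Gamma})\twoheadrightarrow K^1(B\Gamma)$. By Lemma~\ref{lem:K1(BG)_n_square-free}\ref{lem:K1(BG)_n_square-free:K1(BGamma)} and the rational Chern character applied to Theorem~\ref{the:Cohomology_of_BGamma_and_bub(Gamma)}\ref{the:Cohomology_of_BGamma_and_bub(Gamma):bub(Gamma)}, both $K^1(B\Gamma)$ and $K^1(\bub{\Gamma})$ have rational rank $s_1$, so $\cok(\phi)$ is a finitely generated torsion abelian group. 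Consequently $\ker(\delta)$ is a full-rank subgroup of the free abelian group $\bigoplus\widetilde{R}_{\IC}(M)$, hence free abelian of rank $\sum_{(M)}(|M|-1)$. The resulting short exact sequence
$$0 \to K^0(\bub{\Gamma}) \to K^0_\Gamma(\eub{\Gamma}) \to \ker(\delta) \to 0$$
splits because the quotient is free, yielding $K^0_\Gamma(\eub{\Gamma}) \cong \IZ^{t_0}\oplus\IZ^{\sum_{(M)}(|M|-1)} = \IZ^{s_0}$.

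The main obstacle is the vanishing argument for the AHSS differentials in part (i): one must combine the integral cohomology structure of $\bub{\Gamma}$ from Theorem~\ref{the:Cohomology_of_BGamma_and_bub(Gamma)} with the rational degeneracy from the Chern character to show simultaneously that the image of each odd-index differential targeting the $K^0$-diagonal is both torsion-free (lying inside a free group) and torsion (being rationally zero). The induction on $r$ requires a careful bookkeeping of the subquotient structure of successive pages, and verifying that outgoing differentials do not decrease the rank of the relevant entries.
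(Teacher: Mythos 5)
Your proof is correct and takes essentially the same route as the paper: for part (i) the Atiyah--Hirzebruch spectral sequence for $\bub{\Gamma}$ fed with Theorem~\ref{the:Cohomology_of_BGamma_and_bub(Gamma)}~\ref{the:Cohomology_of_BGamma_and_bub(Gamma):bub(Gamma)}, with differentials killed by playing rational degeneration against torsion-freeness of the $K^0$-diagonal (you spell out the argument the paper delegates to Davis--L\"uck), and for part (ii) the Mayer--Vietoris comparison of the two pushouts producing the exact sequence $0 \to K^0(\bub{\Gamma}) \to K^0_{\Gamma}(\eub{\Gamma}) \to \bigoplus_{(M) \in \calm} \overline{R}_{\IC}(M) \to \ker\bigl(K^1(\bub{\Gamma}) \to K^1(B\Gamma)\bigr) \to 0$ with finite last term. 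The only blemish is notational: the quotient term in your comparison of the two Mayer--Vietoris sequences is $\overline{R}_{\IC}(M) = \cok\bigl(R_{\IC}(\{1\}) \to R_{\IC}(M)\bigr)$ rather than $\widetilde{R}_{\IC}(M)$, but both are free abelian of rank $|M|-1$, so the rank count and the splitting argument are unaffected.
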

\begin{proof}~\ref{lem:K0(bubG)_n_square-free:K0(bub(G)} 
  We first show that $K^0(\bub{\Gamma})$ is a finitely generated free
  abelian group. There is a finite $CW$-complex model for $\bub{\Gamma}$
  since $\Gamma$ can be mapped with a finite kernel onto a crystallographic
  group.  This implies that $K^0(\bub{\Gamma})$ is finitely generated
  abelian. Next we show that $K^0(\bub{\Gamma})$ is isomorphic to $\IZ^{t_0}$.  We explain how one has to modify arguments
  in~\cite{Davis-Lueck(2010)} to prove this.

The \emph{Atiyah-Hirzebruch
  spectral sequence} (see~\cite[Chapter~15]{Switzer(1975)}) for topological
$K$-theory
\[
E^{i,j}_2 = H^{i}(\bub \Gamma ;K^j(\pt)) \Rightarrow K^{i+j}(\bub \Gamma)
\]
converges since $\bub \Gamma$ has a model which is a finite dimensional
$CW$-complex.  Because of the computations of 
$H^i(\bub{\Gamma})$ of 
Theorem~\ref{the:Cohomology_of_BGamma_and_bub(Gamma)}~\ref{the:Cohomology_of_BGamma_and_bub(Gamma):bub(Gamma)},
the $E^2$-term in the Atiyah-Hirzebruch spectral sequence
converging to $K^*(\bub{\Gamma})$ looks like
\[
E^{i,j}_{2} \cong
\begin{cases}
  \IZ^{\dim_{\IQ} (H^{i}(\IZ^n;\IQ)^{\IZ/m})} & i \; \text{even}, j\; \text{even};
  \\
  \IZ^{\dim_{\IQ} (H^{i}(\IZ^n;\IQ)^{\IZ/m})}  \oplus A_i' & i \; \text{odd}, i \ge 3, j\; \text{even};
  \\
  0 & i=1, j \;\text{even};
  \\
  0 & j\; \text{odd}.
\end{cases}
\]
where each $A_i'$ is a finite abelian group.  The argument in the proof of~\cite[Lemma~3.4]{Davis-Lueck(2010)}
carries over and shows
\[
E^{i,j}_{\infty} \cong
\begin{cases}
  \IZ^{\dim_{\IQ} (H^{i}(\IZ^n;\IQ)^{\IZ/m})} & i \; \text{even}, j\; \text{even};
  \\
  \IZ^{\dim_{\IQ} (H^{i}(\IZ^n;\IQ)^{\IZ/m})}  \oplus A_i & i \; \text{odd}, i \ge 3, j\; \text{even};
  \\
  0 & i=1, j \;\text{even};
  \\
  0 & j\; \text{odd}.
\end{cases}
\]
where each $A_i$ is a finite abelian group. 
Now assertion~\ref{lem:K0(bubG)_n_square-free:K0(bub(G)} 
follows by inspecting the Atiyah-Hirzebruch spectral sequence.
\\[1mm]~\ref{lem:K0(bubG)_n_square-freeKo_G(eub(G)} 
The group $\Gamma$ satisfies by~\cite[Lemma~6.3]{Lueck-Stamm(2000)}
the condition $(N\!M_{\triv \subseteq \calfin})$. Hence the $\Gamma$-pushout~\eqref{Delta-pushout_for_eub(G)} reduces to the $\Gamma$-pushout
\begin{eqnarray}
\xymatrix{
\coprod_{(M) \in \calm} \Gamma \times_{N_{\Gamma} M}  EM 
\ar[r]^-{i} 
\ar[d]
& E\Gamma \ar[d]
\\
\coprod_{(M) \in \calm} \Gamma/M
\ar[r]_-{j} 
& \eub{\Gamma}
}
\label{Delta-pushout_for_eub(G)_NM}
\end{eqnarray}
We obtain from the Mayer-Vietoris sequences
for $K^*_{\Gamma}$ and $K^*$ applied to the $\Gamma$-pushout~\eqref{Delta-pushout_for_eub(G)_NM}
and to the pushout~\eqref{pushout_for_bub(G)_NM}
analogous to the construction in~\cite[Lemma~7.2~(i)]{Davis-Lueck(2010)}
the long exact sequence 
\[
0 \to K^0(\bub{\Gamma}) \to K^0_{\Gamma}(\eub{\Gamma}) \to \bigoplus_{(M) \in \calm} \overline{R}_{\IC}(M) \to
K^1(\bub{\Gamma}) \to K^1_{\Gamma}(\eub{\Gamma}) \to 0,
\]
where $\overline{R}_{\IC}(M)$ is the cokernel of the homomorphism
$R_{\IC}(\{1\}) \to R_{\IC}(M)$ given by restriction with $M \to \{1\}$. 
We have already checked in the proof of Lemma~\ref{lem:K1(eub(Gamma)_to_K(BGamma)_iso_if_MN}
that the kernel of the map $K^1(\bub{\Gamma}) \to K^1_{\Gamma}(\eub{\Gamma})$ is finite.
Since $\overline{R}_{\IC}(M)  \cong \IZ^{|M|-1}$, the claim follows from
assertion~\ref{lem:K0(bubG)_n_square-free:K0(bub(G)}.
\end{proof}

Define for $i \in \IZ$
\[
s_i = 
\begin{cases}
  \bigl(\sum_{(M) \in \calm} (|M|-1)\bigr) + \sum_{l \in \IZ}
  \rk_{\IZ}\bigl((\Lambda^{2l} \IZ^n)^{\IZ/m}\bigr) & \text{if}\; i \;
  \text{even};
  \\
  \sum_{l \in \IZ} \rk_{\IZ}\bigl((\Lambda^{2l+1} \IZ^n)^{\IZ/m}\bigr)
  & \text{if}\; i \; \text{odd.}
\end{cases}
\]

\begin{lemma} \label{lem:equivariant_K-cohomology_of_eubG_square-free}
Consider $m \in \IZ$. Then
\begin{enumerate}
\item \label{lem:equivariant_K-cohomology_of_eubG_square-free:K_m(Zr)Zn}
The abelian group $K_i(C^*_r(\IZ^n))^{\IZ/m}$ is finitely generated free of rank\\
$\sum_{l \in \IZ} \rk_{\IZ}\bigl((\Lambda^{2l+i} \IZ^n)^{\IZ/m}\bigr)$;
\item \label{lem:equivariant_K-cohomology_of_eubG_square-free:cohomology}
The abelian group $K^i_{\Gamma}(\eub{\Gamma})$ is finitely generated free of rank $s_i$;
\item \label{lem:equivariant_K-cohomology_of_eubG_square-free:homology}
The abelian group $K_i^{\Gamma}(\eub{\Gamma})$ is finitely generated free  of rank $s_i$;
\item \label{lem:equivariant_K-cohomology_of_eubG_square-free:Cast}
The abelian group $K_i^{\Gamma}(C^*_r(\Gamma))$ is finitely generated free  of rank $s_i$.
\end{enumerate}
\end{lemma}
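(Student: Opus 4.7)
The plan is to handle the four parts by combining Baum-Connes with the equivariant results already established and with Corollary~\ref{cor:long_exact_sequence_for_K_ast(Cast(G))_for_G_satisfying_NM}. For part~\ref{lem:equivariant_K-cohomology_of_eubG_square-free:K_m(Zr)Zn}, I would identify $K_*(C^*_r(\IZ^n))$ with $K_*(T^n)$ via Baum-Connes for $\IZ^n$ (Pimsner-Voiculescu). The Chern character on the torus is an integral isomorphism, giving $K_0(C^*_r(\IZ^n)) \cong \bigoplus_{l \ge 0} \Lambda^{2l}\IZ^n_\rho$ and $K_1(C^*_r(\IZ^n)) \cong \bigoplus_{l \ge 0} \Lambda^{2l+1}\IZ^n_\rho$ as $\IZ[\IZ/m]$-modules, with $\IZ/m$ acting via the exterior powers of the conjugation representation $\rho$. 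Taking $\IZ/m$-invariants commutes with finite direct sums and produces a subgroup of a free abelian group, hence free of the stated rank.

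For part~\ref{lem:equivariant_K-cohomology_of_eubG_square-free:cohomology}, the case $i=0$ is exactly Lemma~\ref{lem:K0(bubG)_n_square-free}~\ref{lem:K0(bubG)_n_square-freeKo_G(eub(G)}. For $i=1$, I chain Lemma~\ref{lem:K1(eub(Gamma)_to_K(BGamma)_iso_if_MN} (giving $K^1_\Gamma(\eub\Gamma) \cong K^1(B\Gamma)$) with Lemma~\ref{lem:K1(BG)_n_square-free}~\ref{lem:K1(BG)_n_square-free:K1(BGamma)} (giving $K^1(B\Gamma) \cong K^1(B\IZ^n)^{\IZ/m}$), then apply the Chern character identification from part~\ref{lem:equivariant_K-cohomology_of_eubG_square-free:K_m(Zr)Zn} in cohomological form to conclude freeness of rank $s_1$.

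For parts~\ref{lem:equivariant_K-cohomology_of_eubG_square-free:homology} and~\ref{lem:equivariant_K-cohomology_of_eubG_square-free:Cast}, note that $\Gamma$ is crystallographic (Lemma~\ref{lem:extension_splits}) and hence amenable, so Baum-Connes holds by Higson-Kasparov and identifies $K_i^\Gamma(\eub\Gamma) \cong K_i(C^*_r(\Gamma))$; this reduces~\ref{lem:equivariant_K-cohomology_of_eubG_square-free:homology} to~\ref{lem:equivariant_K-cohomology_of_eubG_square-free:Cast}. Now apply Corollary~\ref{cor:long_exact_sequence_for_K_ast(Cast(G))_for_G_satisfying_NM} to obtain $K_1(C^*_r(\Gamma)) \cong K_1(\bub\Gamma)$ and the short exact sequence $0 \to \bigoplus_{(M)} \widetilde{R}_\IC(M) \to K_0(C^*_r(\Gamma)) \to K_0(\bub\Gamma) \to 0$. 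The ranks follow by applying the rational Chern character to the finite $CW$-complex $\bub\Gamma$ together with Theorem~\ref{the:Cohomology_of_BGamma_and_bub(Gamma)}~\ref{the:Cohomology_of_BGamma_and_bub(Gamma):bub(Gamma)}: $\operatorname{rank} K_i(\bub\Gamma) = \sum_l r_{2l+i}$, giving $\operatorname{rank} K_1(C^*_r(\Gamma)) = s_1$ and $\operatorname{rank} K_0(C^*_r(\Gamma)) = \sum_{(M)}(|M|-1) + \sum_l r_{2l} = s_0$. Freeness of $K_1(\bub\Gamma)$ is immediate from the universal coefficient theorem for topological $K$-theory on the finite $CW$-complex $\bub\Gamma$: since $K^0(\bub\Gamma)$ is free by Lemma~\ref{lem:K0(bubG)_n_square-free}~\ref{lem:K0(bubG)_n_square-free:K0(bub(G)}, the subgroup $\operatorname{Ext}(K_1(\bub\Gamma),\IZ)$ embeds into a free abelian group and hence vanishes, making $K_1(\bub\Gamma)$ torsion-free.

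The main obstacle is establishing the integral freeness of $K_0(C^*_r(\Gamma))$, since a priori $K_0(\bub\Gamma)$ may carry torsion (coming from possible torsion in $K^1(\bub\Gamma)$ through the UCT), and the short exact sequence from Corollary~\ref{cor:long_exact_sequence_for_K_ast(Cast(G))_for_G_satisfying_NM} only splits after inverting $m$. To settle this I would refine the Mayer-Vietoris analysis behind Lemma~\ref{lem:K0(bubG)_n_square-free}~\ref{lem:K0(bubG)_n_square-freeKo_G(eub(G)} to show that the map $K^0_\Gamma(\eub\Gamma) \to \bigoplus_{(M)} \overline{R}_\IC(M)$ is surjective; this forces $K^1(\bub\Gamma)$ to inject into the free group $K^1_\Gamma(\eub\Gamma) \cong \IZ^{s_1}$, hence to be torsion-free, whence $K_0(\bub\Gamma)$ is torsion-free by UCT and the short exact sequence yields $K_0(C^*_r(\Gamma)) \cong \IZ^{s_0}$. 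The key technical input for the surjectivity is the Tate cohomology vanishing of Theorem~\ref{the:Tate_cohomology}, which, as in Section~\ref{sec:The_cohomology_of_classifying_spaces}, controls the torsion behavior of all the comparison maps in sight.
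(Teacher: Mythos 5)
Your parts~\ref{lem:equivariant_K-cohomology_of_eubG_square-free:K_m(Zr)Zn} and~\ref{lem:equivariant_K-cohomology_of_eubG_square-free:cohomology} follow the paper's argument (Baum--Connes for $\IZ^n$ plus the Chern character, and the combination of Lemmas~\ref{lem:K1(eub(Gamma)_to_K(BGamma)_iso_if_MN}, \ref{lem:K1(BG)_n_square-free} and~\ref{lem:K0(bubG)_n_square-free}), so those are fine. The divergence is in parts~\ref{lem:equivariant_K-cohomology_of_eubG_square-free:homology} and~\ref{lem:equivariant_K-cohomology_of_eubG_square-free:Cast}: the paper deduces the $K$-homology statement from the $K$-cohomology statement~\ref{lem:equivariant_K-cohomology_of_eubG_square-free:cohomology} by the universal coefficient theorem for \emph{equivariant} $K$-theory (\cite[Theorem~8.3~(ii)]{Davis-Lueck(2010)}) applied to the finite proper $\Gamma$-$CW$-complex $\eub{\Gamma}$ --- since $K^*_{\Gamma}(\eub{\Gamma})$ is free, $K_*^{\Gamma}(\eub{\Gamma})\cong\Hom_{\IZ}(K^*_{\Gamma}(\eub{\Gamma}),\IZ)$ is free of the same rank --- and only then invokes Baum--Connes. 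You instead reduce~\ref{lem:equivariant_K-cohomology_of_eubG_square-free:homology} to~\ref{lem:equivariant_K-cohomology_of_eubG_square-free:Cast} and try to prove~\ref{lem:equivariant_K-cohomology_of_eubG_square-free:Cast} directly from the short exact sequence of Corollary~\ref{cor:long_exact_sequence_for_K_ast(Cast(G))_for_G_satisfying_NM}.

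That route has a genuine gap. The sequence $0 \to \bigoplus_{(M)} \widetilde{R}_{\IC}(M) \to K_0(C^*_r(\Gamma)) \to K_0(\bub{\Gamma}) \to 0$ gives freeness of the middle term only if the \emph{quotient} $K_0(\bub{\Gamma})$ is torsion-free, and everything you say hinges on the unproved claim that $K^0_{\Gamma}(\eub{\Gamma}) \to \bigoplus_{(M)}\overline{R}_{\IC}(M)$ is surjective, equivalently that $K^1(\bub{\Gamma})$ is torsion-free. Nothing in the paper establishes this: Theorem~\ref{the:Cohomology_of_BGamma_and_bub(Gamma)}~\ref{the:Cohomology_of_BGamma_and_bub(Gamma):bub(Gamma)} shows that $H^{\odd}(\bub{\Gamma})$ carries Tate-cohomology torsion, the Atiyah--Hirzebruch analysis in Lemma~\ref{lem:K0(bubG)_n_square-free} only pins down $E_{\infty}$ up to finite groups $A_i$ in odd total degree, and the proof of Lemma~\ref{lem:K1(eub(Gamma)_to_K(BGamma)_iso_if_MN} shows merely that the kernel of $K^1(\bub{\Gamma})\to K^1_{\Gamma}(\eub{\Gamma})$ is \emph{finite}, not zero --- indeed the whole design of that proof (the divisibility argument with $\II_p(M)\otimes\IZ\widehat{_p}$) is there precisely because one cannot assume that kernel vanishes. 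Saying that the Tate vanishing of Theorem~\ref{the:Tate_cohomology} "controls the torsion of all comparison maps" is not an argument; as it stands your surjectivity claim is neither proved nor obviously true. This is exactly the difficulty the paper's equivariant UCT step is designed to circumvent: the cohomological extension $0\to K^0(\bub{\Gamma})\to K^0_{\Gamma}(\eub{\Gamma})\to \im(\cdots)\to 0$ has free outer terms automatically (a subgroup of $\bigoplus_{(M)}\overline{R}_{\IC}(M)$ is free), whereas the homological one does not. Either supply an actual proof that $K^1(\bub{\Gamma})$ is torsion-free, or replace your argument for~\ref{lem:equivariant_K-cohomology_of_eubG_square-free:homology} by the equivariant universal coefficient theorem applied to the free groups from part~\ref{lem:equivariant_K-cohomology_of_eubG_square-free:cohomology}.
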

\begin{proof}~\ref{lem:equivariant_K-cohomology_of_eubG_square-free:K_m(Zr)Zn}
  Obviously $K_i(C^*_r(\IZ^n))^{\IZ/m}$ is finitely generated free since
  $K_i(C^*_r(\IZ^n))$ is finitely generated free. By the
  Baum-Connes Conjecture $K_i(C^*_r(\IZ^n))^{\IZ/m} \cong
  K_i(B\IZ^n)^{\IZ/m}$.  For every $CW$-complex $X$, e.g., $X =
  B\IZ^n$, the Chern character yields a natural isomorphism
\[
\bigoplus_{k \in \IZ} H_{i+2k}(X;\IQ) \xrightarrow{\cong} K_i(X)\otimes_{\IZ} \IQ.
\]
Now the claim follows from the isomorphisms
\[
H_i(B\IZ^n;\IQ)^{\IZ/m} \cong H^i(B\IZ^n;\IQ)^{\IZ/m}
\cong \left((\Lambda^i \IZ^n) \otimes_{\IZ} \IQ\right)^{\IZ/m}.
\]%
\ref{lem:equivariant_K-cohomology_of_eubG_square-free:cohomology} This
follows from Lemma~\ref{lem:K1(eub(Gamma)_to_K(BGamma)_iso_if_MN},
Lemma~\ref{lem:K1(BG)_n_square-free} and
Lemma~\ref{lem:K0(bubG)_n_square-free}.
\\[1mm]~\ref{lem:equivariant_K-cohomology_of_eubG_square-free:homology}
This follows from
assertion~\ref{lem:equivariant_K-cohomology_of_eubG_square-free:cohomology}
and the universal coefficient theorem for equivariant $K$-theory as
explained in~\cite[Theorem~8.3~(ii)]{Davis-Lueck(2010)}.
\\[1mm]~\ref{lem:equivariant_K-cohomology_of_eubG_square-free:Cast}
This follows from
assertion~\ref{lem:equivariant_K-cohomology_of_eubG_square-free:homology}
and the Baum-Connes Conjecture which holds for $\Gamma$~(see~\cite{Higson-Kasparov(2001)}).
\end{proof}

Now we can give the proof of Theorem~\ref{the:Topological_K-theory_for_Zr_rtimes_Z/n}.

\begin{proof}[Proof of
  Theorem~\ref{the:Topological_K-theory_for_Zr_rtimes_Z/n}]
  \ \\~\ref{the:Topological_K-theory_for_Zr_rtimes_Z/n:K1_abstract}
  The map $\omega_1\colon K_1(C^*_r(\Gamma)) \xrightarrow{\cong}
  K_1(\bub{\Gamma})$ is bijective by
  Theorem~\ref{the:extension_of_Zn_by_F}~\ref{the:extension_of_Zn_by_F:K1}.
  The composite
\[
\IZ \otimes_{\IZ[\IZ/m]} K_1(C_r^*(\IZ^n)) \xrightarrow{k_*}
K_1(C^*_r(\Gamma)) \xrightarrow{k^*} K_1(C^*_r(\IZ^n))^{\IZ/m}
\] 
is
multiplication with the norm element $N_{\IZ/m}$ by the Double Coset Formula
(see~\cite[Lemma~3.2]{Echterhoff-Lueck-Phillips-Walters(2010)}).
The kernel of this map is $\widehat{H}^{-1}(\IZ/m;K_1(C^*_r(\IZ^n)))$ and
its cokernel of this map is $\widehat{H}^0(\IZ/m;K_1(C^*_r(\IZ^n)))$.
The group $\widehat{H}^0(\IZ/m;K_1(C^*_r(\IZ^n)))$ vanishes 
by Theorem~\ref{the:Tate_cohomology}  and
$\widehat{H}^1(\IZ/m;K_1(C^*_r(\IZ^n)))$ is annihilated by $m$.  Hence
the composite above and therefore $k^* \colon K_1(C^*_r(\Gamma)) \to
K_1(C^*_r(\IZ^n))^{\IZ/m}$ are surjective.  The abelian group
$K_1(C^*_r(\Gamma))$ is a finitely generated free abelian group by
Lemma~\ref{lem:equivariant_K-cohomology_of_eubG_square-free}.
Obviously this is also true for $K_1(C^*_r(\IZ^n))^{\IZ/m}$.  They have
the same rank by
Lemma~\ref{lem:equivariant_K-cohomology_of_eubG_square-free}.  Hence
the epimorphism $k^* \colon K_1(C^*_r(\Gamma)) \to
K_1(C^*_r(\IZ^n))^{\IZ/m}$ must be bijective. This implies that the map
$k_* \colon \IZ \otimes_{\IZ[\IZ/m]} K_1(C_r^*(\IZ^n)  \to K_1(C^*_r(\Gamma))$
is surjective and its kernel is $\widehat{H}^{-1}(\IZ/m;K_1(C^*_r(\IZ^n)))$.
\\[1mm]~\ref{the:Topological_K-theory_for_Zr_rtimes_Z/n:K0_abstract}
The composite
\[
\overline{k^*} \circ \overline{k_*} \colon 
\IZ  \otimes_{\IZ[\IZ/m]} \widetilde{K}_{0}(C^*_r(\IZ^n))  
\to \widetilde{K}_0(C^*_r(\IZ^n))^{\IZ/m}
\]
is by   given by multiplication
with the norm element $N_{\IZ/m}$ of $\IZ/m$ by the Double Coset Formula 
(see~\cite[Lemma~3.2]{Echterhoff-Lueck-Phillips-Walters(2010)}). Since $\widehat{H}^{-1}(\IZ/m,K_0(C^*_r(\IZ^n))$
vanishes by Theorem~\ref{the:Tate_cohomology}, we conclude that it is injective.
Hence $ \IZ \otimes_{\IZ[\IZ/m]} \widetilde{K}_{0}(C^*_r(\IZ^n)) $
is finitely generated free. Now apply
Theorem~\ref{the:extension_of_Zn_by_F}~\ref{the:extension_of_Zn_by_F:K0}. 
\\[1mm]~\ref{the:Topological_K-theory_for_Zr_rtimes_Z/n:explicite}
This follows from
Lemma~\ref{lem:equivariant_K-cohomology_of_eubG_square-free}%
~\ref{lem:equivariant_K-cohomology_of_eubG_square-free:Cast}.
\\[1mm]~\ref{the:Topological_K-theory_for_Zr_rtimes_Z/n:non-torsion:n_even}
Since $m$ is even, $\IZ/2$ is a subgroup of $\IZ/m$.  Since the
$\IZ/m$-action on $\IZ^n$ is free outside the origin, the
$\IZ/2$-action on $\IZ^n$ must be given by $-\id$.  Hence the induced
$\IZ/2$-action on $\Lambda^i \IZ^n$ is given by $- \id$ for odd
$i$. Hence $(\Lambda^i \IZ^n)^{\IZ/m}$ vanishes for odd $i$.  This
implies $s_1 = 0$. Now the claim follows from
assertion~\ref{the:Topological_K-theory_for_Zr_rtimes_Z/n:explicite}.
This finishes the proof of
Theorem~\ref{the:Topological_K-theory_for_Zr_rtimes_Z/n}
\end{proof}


\typeout{-------- Section 5: Conjugacy classes of finite subgroups and cohomology  ---------------}

\section{Conjugacy classes of finite subgroups and cohomology}
\label{sec:Conjugacy_classes_of_finite_subgroups_and_cohomology}

Consider the group extension $1 \to \IZ^n \to \Gamma \xrightarrow{\pi} \IZ/m\to 1$ as it appears in
Theorem~\ref{the:Topological_K-theory_for_Zr_rtimes_Z/n}. In particular we assume that the conjugation action of
$\IZ/m$ on $\IZ^n$ is free outside the origin. For the remainder of this paper
we will abbreviate $L = \IZ^n$ and $G = \IZ/m$.

\begin{lemma}\label{lem:maximal_maximal_finite_subgroups} \
  \begin{enumerate}

  \item \label{lem:maximal_maximal_finite_subgroups:H_max} 
    Let $H \subseteq  \Gamma$ be a non-trivial finite subgroup. Then there is a subgroup 
   $H_{\max} \subseteq \Gamma$ which is uniquely determined by the properties
    that $H \subseteq H_{\max}$ and $H_{\max}$ is a maximal finite subgroup,
    i.e., for every finite subgroup $K \subseteq \Gamma$ with 
    $H_{\max} \subseteq K$ we have $K = H_{\max}$;

  \item \label{lem:maximal_maximal_finite_subgroups:N_H_max} If $H$ is a maximal
    finite subgroup of $\Gamma$, then $N_{\Gamma} H = H;$

  \item \label{lem:maximal_maximal_finite_subgroups:NH} If $H \subseteq \Gamma$
    is a non-trivial finite subgroup of $\Gamma$, then $H_{\max} = N_{\Gamma}H$.

  \end{enumerate}
\end{lemma}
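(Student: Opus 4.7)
The plan is to leverage two preliminary observations. First, because $L = \IZ^n$ is torsion-free, every non-trivial finite subgroup $H \subseteq \Gamma$ maps injectively to $G = \IZ/m$ under $\pi$, and is therefore cyclic of some order dividing $m$. Second, I would show that for any torsion element $h \in \Gamma$ with $\pi(h) \ne 0$ the centralizer $C_\Gamma h$ is finite of order at most $m$. Using the splitting $\Gamma = L \rtimes G$ provided by Lemma~\ref{lem:extension_splits}, write $h = (a,g)$ with $g \ne 0$; a direct computation in the semidirect product shows that $(b,g') \in \Gamma$ commutes with $h$ precisely when $(\rho(g) - 1)(b) = (\rho(g') - 1)(a)$. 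Since $\rho(g)$ has no non-zero fixed vectors on $L$, the map $\rho(g) - 1$ is injective on $L$, so each of the $m$ choices of $g' \in G$ admits at most one $b \in L$ satisfying the equation.

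For part~\ref{lem:maximal_maximal_finite_subgroups:H_max}, existence of a maximal finite subgroup containing $H$ is clear since all finite subgroups have order bounded by $m$. For uniqueness, suppose $K_1$ and $K_2$ are both maximal finite subgroups containing $H$, and pick a non-trivial $h \in H$. Each $K_i$ is abelian and contains $h$, hence $K_i \subseteq C_\Gamma h$; so $\langle K_1, K_2 \rangle$ lies in the finite group $C_\Gamma h$ and, by the first observation, is itself finite cyclic. Maximality of both $K_1$ and $K_2$ then forces $K_1 = \langle K_1, K_2 \rangle = K_2$, and this common subgroup is the desired $H_{\max}$.

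For part~\ref{lem:maximal_maximal_finite_subgroups:NH} I would first note that $H_{\max}$ is cyclic and hence abelian, which gives $H_{\max} \subseteq C_\Gamma H \subseteq N_\Gamma H$. The converse inclusion reduces to showing that $N_\Gamma H$ is finite: the quotient $N_\Gamma H / C_\Gamma H$ embeds into the finite group $\operatorname{Aut}(H)$, and for any non-trivial $h \in H$ we have $C_\Gamma H \subseteq C_\Gamma h$, which is finite by the centralizer estimate. Thus $N_\Gamma H$ is a finite subgroup of $\Gamma$ containing $H$, and by part~\ref{lem:maximal_maximal_finite_subgroups:H_max} it is contained in the unique maximal finite subgroup $H_{\max}$. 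Combining the two inclusions gives $N_\Gamma H = H_{\max}$. Part~\ref{lem:maximal_maximal_finite_subgroups:N_H_max} then drops out as the special case $H = H_{\max}$: applying~\ref{lem:maximal_maximal_finite_subgroups:NH} yields $N_\Gamma H = H_{\max} = H$.

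The heart of the argument is the centralizer estimate, which is an immediate consequence of the freeness hypothesis once the semidirect product description is used; everything else is elementary bookkeeping with cyclic subgroups and normalizers, and I do not anticipate any serious obstacle.
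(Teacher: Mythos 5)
Your argument is correct, but it takes a genuinely different route from the paper's. The paper disposes of parts~\ref{lem:maximal_maximal_finite_subgroups:H_max} and~\ref{lem:maximal_maximal_finite_subgroups:N_H_max} by citing \cite[Lemma~6.3]{Lueck-Stamm(2000)} and then deduces~\ref{lem:maximal_maximal_finite_subgroups:NH} in one line: the freeness hypothesis gives $N_\Gamma H\cap L=\{0\}$, so $N_\Gamma H$ is a finite subgroup containing $H$ and hence lies in $H_{\max}$, while $H_{\max}$ is abelian (finite subgroups inject into $\IZ/m$ under $\pi$, hence are cyclic) and therefore normalizes the subgroup $H$ it contains. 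You instead make the whole lemma self-contained by proving the centralizer bound $|C_\Gamma(h)|\le m$ for torsion $h$ with $\pi(h)\neq 0$ directly in the semidirect product; your identity $(\rho(g)-1)(b)=(\rho(g')-1)(a)$ together with the injectivity of $\rho(g)-1$ is exactly the mechanism the paper uses to get $N_\Gamma H\cap L=\{0\}$, just packaged differently. Your logical order --- uniqueness in~\ref{lem:maximal_maximal_finite_subgroups:H_max} via finiteness of $\langle K_1,K_2\rangle\subseteq C_\Gamma(h)$, then~\ref{lem:maximal_maximal_finite_subgroups:NH}, then~\ref{lem:maximal_maximal_finite_subgroups:N_H_max} as the special case $H=H_{\max}$ --- is sound; note only that deducing~\ref{lem:maximal_maximal_finite_subgroups:N_H_max} from~\ref{lem:maximal_maximal_finite_subgroups:NH} requires maximal finite subgroups to be non-trivial, which holds here because $\Gamma$ contains a copy of $\IZ/m$ by Lemma~\ref{lem:extension_splits}, and that the cyclicity of $\langle K_1,K_2\rangle$ is irrelevant, since finiteness alone suffices to invoke maximality. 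What your approach buys is independence from the external reference at the cost of some elementary bookkeeping; what the paper's citation buys is brevity.
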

\begin{proof}
Assertions~\ref{lem:maximal_maximal_finite_subgroups:H_max} and~\ref{lem:maximal_maximal_finite_subgroups:N_H_max}
follow from~\cite[Lemma~6.3]{Lueck-Stamm(2000)}. They imply assertion~\ref{lem:maximal_maximal_finite_subgroups:NH}
since the action of $G$ on $L$ is free outside the origin and therefore $N_{\Gamma}H \cap L = \{0\}$ and hence
$|N_{\Gamma}H|< \infty$.
\end{proof}

\begin{notation}
  Define for a subgroup $C \subseteq G$
  \[
  \calc(C) := \{(H) \mid H \subseteq \Gamma, \pi(H) = C\},
  \]
  where $(H)$ denotes the conjugacy class of the subgroup $H \subseteq \Gamma$
  within $\Gamma$.  Define
  \[
  \calm(C) := \{(H) \in \calc(C) \mid H \; \text{ is a maximal finite subgroup
    of} \; \Gamma\}.
  \]
  Put
  \[
   \calm := \{(H)  \mid H \; \text{ is a maximal finite subgroup of} \; \Gamma\}.
   \]
\end{notation}

 We have $\calm = \coprod_{\{1\} \subsetneq C \subseteq G} \calm(C)$.

If $K \subseteq G$ is a subgroup, then we will abbreviate $H^1(K; \res_G^HL)$
by $H^1(K;L)$
when it is clear from the context to which subgroup the $G$-action on $L$ is restricted.

\begin{lemma} \label{lem:bijection_cohomology_calc}
Let $t$ be a generator $t \in G$. Consider a non-trivial subgroup $C \subseteq G$.  
Put $k = [G:C]$. Then $t^k$ is a generator of $C$ and there are bijections
\[
H^1(C;L)/G \xrightarrow{\cong} \cok\bigl((t^k - 1) \colon L \to L\bigl)/G 
\xrightarrow{\cong} \calc(C),
\]
where the $G$-operation on the first and second term above comes
from the $G$-action on $L$.
\end{lemma}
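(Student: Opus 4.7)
The plan is to identify the three sets explicitly by means of a splitting of the extension provided by Lemma~\ref{lem:extension_splits}. First I would fix a splitting $\sigma\colon G \to \Gamma$, so that $\Gamma \cong L \rtimes G$ and $\pi^{-1}(C) \cong L \rtimes C$. Any finite subgroup $H \subseteq \Gamma$ with $\pi(H) = C$ must intersect $L$ trivially, since $L$ is torsion-free, so $\pi|_H \colon H \xrightarrow{\cong} C$ and $H$ is the image of a group-theoretic section $C \to L \rtimes C$. Such sections are in bijection with the set $Z^1(C;L)$ of $1$-cocycles, via the map sending $\phi \in Z^1(C;L)$ to the section $c \mapsto (\phi(c),\, c)$; the homomorphism property corresponds precisely to the cocycle condition $\phi(c_1 c_2) = \phi(c_1) + c_1 \cdot \phi(c_2)$.

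Next I would analyze the $\Gamma$-conjugation action on these sections. A direct computation inside $L \rtimes G$, using that $G$ is abelian (so that $g c g^{-1} = c$ for $g \in G$ and $c \in C$), shows that conjugation by $\gamma = (\ell, g)$ sends the section associated to $\phi$ to the section associated to the cocycle $c \mapsto g \cdot \phi(c) + (1 - c)\cdot \ell$. Passing to $H^1(C;L) = Z^1(C;L)/B^1(C;L)$, the $L$-part is absorbed as a coboundary and the $G$-part acts by its natural action on $L$. Consequently
\[
\calc(C) \;=\; Z^1(C;L)/\Gamma \;=\; H^1(C;L)/G,
\]
which yields the first bijection.

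For the second bijection I would use that $C = \langle t^k \rangle$ is cyclic of order $d := m/k$ and invoke the standard presentation of the cohomology of a cyclic group:
\[
H^1(C;L) \;=\; \ker\bigl(N\colon L \to L\bigr)\big/\im\bigl(t^k-1\colon L \to L\bigr),
\]
where $N = 1 + t^k + \cdots + (t^k)^{d-1}$ is the norm element. Since the $G$-action on $L$ is free outside the origin and $C$ is non-trivial, $L^C = \{0\}$; but $\im(N) \subseteq L^C$, so $N$ is the zero map and $\ker N = L$. Therefore $H^1(C;L) \cong L/(t^k-1)L = \cok\bigl((t^k-1)\colon L \to L\bigr)$, and this identification is visibly $G$-equivariant because $G$ is abelian and acts on $L$ commuting with $t^k - 1$. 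Taking $G$-orbits completes the second bijection.

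The main obstacle is the bookkeeping in the second step: one has to verify that $\Gamma$-conjugation on sections descends to the natural $G$-action on $H^1(C;L)$, and this rests on the explicit semidirect product computation together with the crucial fact that $G$ is abelian. The vanishing $L^C = 0$ used in the third step is the direct input from the freeness hypothesis on the conjugation action, and it both collapses the cyclic-group cohomology to a cokernel and ensures that the composite bijection takes the clean form stated. Chasing the maps through, the conjugacy class in $\calc(C)$ attached to $[x] \in \cok((t^k-1)\colon L \to L)/G$ is that of the subgroup $\langle (x, t^k)\rangle \subseteq L \rtimes G = \Gamma$.
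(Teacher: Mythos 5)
Your proof is correct and follows essentially the same route as the paper: both identify $H^1(C;L)$ with $\cok\bigl((t^k-1)\colon L \to L\bigr)$ by observing that the norm map vanishes because $L^C=\{0\}$, and both match classes of $l\in L$ with conjugacy classes of the cyclic subgroups generated by $l$ times a lift of a generator of $C$. The only difference is one of packaging: you organize the second bijection through the classical correspondence between complements of $L$ in $L\rtimes C$ and $Z^1(C;L)$, with $\Gamma$-conjugacy inducing the $G$-twisted coboundary relation, whereas the paper verifies well-definedness, injectivity and surjectivity of the explicit map $[l]\mapsto(\langle\gamma^{-k}l\rangle)$ by carrying out the same semidirect-product computation directly on generators.
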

\begin{proof}
  By definition $H^1(C;L)$ is the quotient of the kernel of the map $N
  \colon L \to L$ coming from multiplication with the norm element 
  $N_C = \sum_{g     \in C} g$ by the image of the map $(t^k - 1) \colon L \to L$. For any $l \in
  L$ the element $N_C\cdot l$ lies in $L^C$. Since the $G$-action on $L$ is free
  outside the origin and $C$ is non-trivial, $L^C = \{0\}$. We conclude that 
  $N_C   \colon L \to L$ is the trivial map. This explains the first isomorphism.

  Fix an element $\gamma \in \Gamma$ of order $m = |G|$ with $\pi(\gamma) = t$.
  The second map sends the class of
  $l$ to the conjugacy class of the subgroup $\langle \gamma^{-k} \cdot  l\rangle$ of
  $\Gamma$ generated by $\gamma^{-k} \cdot l$.  We have to show that it is well-defined.
  We compute $(\gamma^{-k} \cdot l)^{|C|} =  \gamma^{-k \cdot |C|} \cdot (N_C \cdot l) = \gamma^{-|G|} = 1$. 
  Hence $\langle \gamma^{-k} \cdot  l\rangle$
  is a finite subgroup of $\Gamma$ whose image under the projection $\pi \colon \Gamma \to G$
  is $C$.   Now suppose for two elements $l_0,l_1 \in L$ that they yield 
  the same class in  $\cok\bigl((t^k - 1) \colon L \to L\bigl)/G$.
  Then there exists $j \in \IZ/m$ and $l' \in L$ with $l_1 = t^j \cdot l_0 + (t^k -1) \cdot l'$. Put
  $l'' = t^{-j} \cdot l'$. Then $t^j \cdot \bigl((t^k -1) \cdot  l'' + l_0\bigr) = l_1$ and hence
  $(\gamma^j \cdot l'')(\gamma^{-k}   l_0)(\gamma^j \cdot l'')^{-1} = \gamma^{-k} l_1$. This implies
  that $\langle \gamma^{-k} l_0\rangle$ and $\langle \gamma^{-k} l_1\rangle$ are
  conjugated in $\Gamma$. 

  The second map is surjective since any finite subgroup $H \subseteq \Gamma$
  with $\pi(H) = C$ can be written $\langle \gamma^{-k} \cdot l\rangle $ for an
  appropriate element $l \in L$.
  
  Finally we prove injectivity.  Notice for the sequel that any element in
  $\Gamma$ can be written uniquely in the form $\gamma^j \cdot l'$ for some $j
  \in \IZ/m$ and $l' \in L$. Suppose that $\langle \gamma^{-k} l_0\rangle$ and
  $\langle \gamma^{-k} l_1\rangle$ are conjugated in $\Gamma$ for $l_0,l_1 \in
  L$. Then there exists a natural number $i \in \IZ/|C|^{\times}$ such that for
  appropriate $j \in \IZ/m$ and $l' \in L$ we have $(\gamma^j \cdot l')(\gamma^{-k}
  l_0)(\gamma^j \cdot l')^{-1} = (\gamma^{-k} l_1)^i$.  Applying $\pi$ to this
  equation, yields $t^{-k} = t^{-ki}$ and hence $i = 1 \mod |C|$. Hence get
  $(\gamma^j \cdot l')(\gamma^{-k} l_0)(\gamma^j \cdot l')^{-1} = \gamma^{-k} l_1$.
  We conclude $\gamma^j \cdot \bigl(\gamma^k l'\gamma^{-k}  l_0(l')^{-1}\bigr)\gamma^{-j} = l_1$. 
  This yields, when we use in $L$ the
  additive notation, $t^j \cdot \bigl((t^k -1) l' + l_0\bigr) = l_1$. If we put
  $l'' = t^j \cdot l'$, we conclude
  \[
  l_1 = t^j \cdot l_0 + (t^k -1) \cdot l''.
  \]
  This means that $l_0$ and $l_1$ define the same class in 
  $\cok\bigl((t^k - 1)  \colon L \to L\bigl)/G$.
   
\end{proof}

Let $C \subseteq D \subseteq G$ be  subgroups of $G$. Since 
$\pi^{-1}(C) \subseteq \Gamma$ is normal in $\Gamma$, we can define a map
\begin{eqnarray}
i_{C \subseteq D} \colon \calc(D) \to \calc(C) & & (H) \mapsto (H \cap \pi^{-1}(C)).
\label{calc(D)_to_calc(C)}
\end{eqnarray}

\begin{lemma} \label{lem:calc(D)_to_calc(C)} The map $i_{C\subseteq D} \colon
  \calc(D) \to \calc(C)$ is injective if $C$ is non-trivial.
\end{lemma}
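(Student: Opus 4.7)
The plan is to transport the question via Lemma~\ref{lem:bijection_cohomology_calc} to a statement about a specific endomorphism of the $\IZ[G]$-module $L$. Let $j := [G:D]$, $k := [G:C]$, and $s := [D:C]$, so that $k = js$. Set $\sigma := t^j$, a generator of $D$, and introduce the \emph{partial norm}
\[
\Sigma := 1 + \sigma + \sigma^2 + \cdots + \sigma^{s-1} \in \IZ[G],
\]
which satisfies the identity $\Sigma \cdot (t^j - 1) = t^k - 1$.

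The first step is to identify $i_{C \subseteq D}$ with the map
\[
\bar\Sigma\colon \cok\bigl((t^j - 1)\colon L \to L\bigr)/G \to \cok\bigl((t^k - 1)\colon L \to L\bigr)/G
\]
induced by $\Sigma$, under the parametrizations of Lemma~\ref{lem:bijection_cohomology_calc}. For $H = \langle \gamma^{-j} l\rangle$, a short induction using $l\gamma^{-j} = \gamma^{-j}(t^j \cdot l)$ gives $(\gamma^{-j} l)^s = \gamma^{-k}\Sigma(l)$, so $H \cap \pi^{-1}(C) = \langle \gamma^{-k}\Sigma(l)\rangle$ corresponds to $[\Sigma(l)]$; well-definedness and $G$-equivariance of $\bar\Sigma$ follow from the identity above and the commutativity of $\IZ[G]$. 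The second step reduces injectivity of $\bar\Sigma$ to injectivity of $\Sigma\colon L \to L$: if $h \cdot \Sigma(l_1) - \Sigma(l_0) = (t^k - 1) l' = \Sigma(t^j - 1) l'$ for some $h \in G$ and $l' \in L$, then commutativity yields $\Sigma\bigl(h \cdot l_1 - l_0 - (t^j - 1) l'\bigr) = 0$, and injectivity of $\Sigma$ forces $[l_0] = [l_1]$ in $\cok\bigl((t^j - 1) L\bigr)/G$.

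It remains to show that $\Sigma\colon L \to L$ is injective, which I would do after extending scalars to $\IQ$. The free-action hypothesis gives $(L \otimes \IQ)^d = L^d \otimes \IQ = 0$ for every non-identity $d \in D$, so in the decomposition of the $\IQ[D]$-module $L \otimes \IQ$ only the unique faithful irreducible $\IQ[D]$-representation $\IQ[\zeta_{|D|}]$ can occur; hence $L \otimes \IQ \cong \IQ[\zeta_{|D|}]^r$ for some $r$, with $\sigma$ acting on each summand as multiplication by $\zeta_{|D|}$. Therefore $\Sigma$ acts on each summand as $(\zeta_{|D|}^s - 1)/(\zeta_{|D|} - 1) = (\zeta_{|C|} - 1)/(\zeta_{|D|} - 1)$, a non-zero element of the field $\IQ[\zeta_{|D|}]$ precisely because the hypothesis that $C$ is non-trivial forces $\zeta_{|C|} \ne 1$. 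Consequently $\Sigma \otimes \IQ$ is invertible, and so $\Sigma$ is injective on the torsion-free group $L$. The main subtlety I expect is keeping clear that $\Sigma$ is a \emph{partial} norm rather than the full norm $N_D$: indeed $N_D = \Sigma \cdot N_C$ annihilates $L$ because $L^D = 0$, so it is precisely the assumption $C \ne \{1\}$ that keeps $\Sigma$ shorter than $N_D$, and hence invertible.
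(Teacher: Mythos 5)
Your proof is correct, but it takes a genuinely different route from the paper's. The paper argues purely group-theoretically in three lines: choosing representatives with $H_1 \cap \pi^{-1}(C) = H_2 \cap \pi^{-1}(C)$, this common intersection is a non-trivial finite subgroup, hence by Lemma~\ref{lem:maximal_maximal_finite_subgroups}~\ref{lem:maximal_maximal_finite_subgroups:H_max} it lies in a unique maximal finite subgroup $K$, which must then contain both $H_1$ and $H_2$; since $\pi$ is injective on $K$ and $\pi(H_1)=\pi(H_2)=D$, both equal $K \cap \pi^{-1}(D)$. You instead transport the map through the parametrization of Lemma~\ref{lem:bijection_cohomology_calc}, identify $i_{C\subseteq D}$ with the map induced by the partial norm $\Sigma = 1 + t^j + \cdots + t^{(s-1)j}$, and prove $\Sigma$ acts invertibly on $L \otimes \IQ$ using the decomposition $L\otimes\IQ \cong \IQ[\zeta_{|D|}]^r$ forced by the free-action hypothesis. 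Both arguments are sound; all the individual steps of yours check out, including the computation $(\gamma^{-j}l)^s = \gamma^{-k}\Sigma(l)$ and the observation that non-triviality of $C$ is exactly what makes $\Sigma$ a \emph{proper} partial norm with $\zeta_{|D|}^s \ne 1$. The trade-off: the paper's proof is shorter and needs only the $(M_{\triv\subseteq\calfin})$-structure already imported from L\"uck--Stamm, whereas yours is longer and quietly re-derives the rational isotypic structure of $L$ that the paper only establishes later (in the proof of Theorem~\ref{the:H(C;L)_in_A(G)}~\ref{the:H(C;L)_in_A(G):k}); on the other hand, your approach yields an explicit formula for $i_{C\subseteq D}$ in the cohomological coordinates, which would be genuinely useful if one wanted to identify the subsets $\calc(C;D)$ of Notation~\ref{calc(C,D)} concretely rather than merely count them.
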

\begin{proof}
  Consider $(H_1)$ and $(H_2)$ such that $i_{C\subseteq D}((H_1)) =
  i_{C\subseteq D}((H_2))$ holds.  This means $(H_1 \cap \pi^{-1}(C)) = (H_2  \cap \pi^{-1}(C))$. 
  We can choose the representatives $H_1$ and $H_2$ such
  that $H_1 \cap \pi^{-1}(C) = H_2 \cap \pi^{-1}(C)$. Since $\pi$ maps $H_1 \cap
  \pi^{-1}(C)$ onto $C$ and $C$ is non-trivial, $H_1 \cap \pi^{-1}(C)$ is
  non-trivial. Let $K$ be the maximal finite subgroup uniquely determined by the
  property $H_1 \cap \pi^{-1}(C) \subseteq K$. The existence of $K$ and the fact
  that $K$ contains both $H_1$ and $H_2$ as subgroups follows from
  Lemma~\ref{lem:maximal_maximal_finite_subgroups}~\ref{lem:maximal_maximal_finite_subgroups:H_max}.
  Hence $H_1 = K \cap \pi^{-1}(D) = H_2$. This implies $(H_1) = (H_2)$.
\end{proof}

\begin{notation} \label{calc(C,D)}
For subgroups $C \subseteq D \subseteq G$ put
\[
\calc(C;D) = \im\bigl(i_{C \subseteq D} \colon \calc(D) \to \calc(C)\bigr)
\]
\end{notation}
\begin{lemma}
\label{lem:calc(C;d_1)_capcalc(C,D_2)}
Let $C, D_1, D_2$ be subgroups of $G$ with $C \not= \{1\}$, $C \subseteq D_1$
and $C \subseteq D_2$. Then
\[
\calc(C;D_1) \cap \calc(C,D_2) = \calc(C;\langle D_1,D_2\rangle).
\]
\end{lemma}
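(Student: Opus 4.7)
The plan is to prove both inclusions by exhibiting explicit representatives. The case $C = \{1\}$ is vacuous because $L$ is torsion-free, so every finite $K\subseteq\Gamma$ satisfies $K\cap L = \{1\}$, giving $\calc(\{1\};D) = \calc(\{1\})$ for every $D$. Hence I will assume throughout that $C\neq\{1\}$.

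For the inclusion $\calc(C;\langle D_1,D_2\rangle)\subseteq \calc(C;D_1)\cap\calc(C;D_2)$, I would start with a representative $(H) = (K\cap\pi^{-1}(C))$ with $\pi(K) = \langle D_1,D_2\rangle$ and set $K_i := K\cap\pi^{-1}(D_i)$. A short verification shows $\pi(K_i) = \pi(K)\cap D_i = D_i$ and, since $C\subseteq D_i$,
\[
K_i\cap\pi^{-1}(C) \;=\; K\cap\pi^{-1}(D_i\cap C) \;=\; K\cap\pi^{-1}(C) \;=\; H,
\]
so $(H)\in\calc(C;D_i)$ for $i=1,2$.

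The reverse inclusion is the substantive direction. Given $(H)\in\calc(C;D_1)\cap\calc(C;D_2)$, I first arrange representatives. Pick $K_1,K_2\subseteq\Gamma$ finite with $\pi(K_i)=D_i$ and $(K_i\cap\pi^{-1}(C)) = (H)$. Since $\pi^{-1}(C)$ is normal in $\Gamma$ (the group $G=\IZ/m$ being abelian), conjugation commutes with intersection: $\gamma(K_i\cap\pi^{-1}(C))\gamma^{-1} = \gamma K_i\gamma^{-1}\cap\pi^{-1}(C)$. Replacing each $K_i$ by a suitable conjugate, I may assume $K_i\cap\pi^{-1}(C) = H$ on the nose for both $i$; the projection $\pi(\gamma K_i\gamma^{-1})$ remains $D_i$ by normality of $D_i$ in $G$.

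Now $H$ is non-trivial because $\pi(H)=C\neq\{1\}$, so by Lemma~\ref{lem:maximal_maximal_finite_subgroups}~\ref{lem:maximal_maximal_finite_subgroups:H_max} there is a unique maximal finite subgroup $H_{\max}\subseteq\Gamma$ containing $H$, and every finite subgroup containing $H$ lies in $H_{\max}$; in particular $K_1,K_2\subseteq H_{\max}$. Therefore $K := \langle K_1,K_2\rangle\subseteq H_{\max}$ is still finite, and $\pi(K) = \langle\pi(K_1),\pi(K_2)\rangle = \langle D_1,D_2\rangle$. Finally I claim $K\cap\pi^{-1}(C)=H$. The inclusion $H\subseteq K\cap\pi^{-1}(C)$ is clear. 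For the reverse, $K$ is finite while $L$ is torsion-free, so $K\cap L=\{1\}$, whence $\pi$ restricted to the finite group $K\cap\pi^{-1}(C)$ is injective with image contained in $C$. Thus $|K\cap\pi^{-1}(C)|\le |C| = |H|$, and the inclusion $H\subseteq K\cap\pi^{-1}(C)$ forces equality. This shows $(H) = (K\cap\pi^{-1}(C))\in\calc(C;\langle D_1,D_2\rangle)$, completing the proof. The main obstacle is the simultaneous choice of representatives $K_1,K_2$ with the \emph{same} $H$; the resolution exploits the normality of $\pi^{-1}(C)$ and $D_i$ coming from abelianness of $G$, after which the uniqueness of $H_{\max}$ does all the remaining work.
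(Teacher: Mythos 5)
Your proof is correct and follows essentially the same route as the paper: choose common representatives with $K_1\cap\pi^{-1}(C)=K_2\cap\pi^{-1}(C)=H$, invoke the uniqueness of the maximal finite subgroup containing the non-trivial group $H$ to see that $\langle K_1,K_2\rangle$ is finite, and then identify its intersection with $\pi^{-1}(C)$. You merely supply details the paper leaves implicit (the conjugation adjustment via normality of $\pi^{-1}(C)$, the counting argument giving $\langle K_1,K_2\rangle\cap\pi^{-1}(C)=H$, and the easy inclusion).
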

\begin{proof}
Consider $(H) \in \calc(C;D_1) \cap \calc(C,D_2)$. Choose $(H_1) \in
\calc(D_1)$ and $(H_2) \in \calc(D_2)$ with $(H_1 \cap \pi^{-1}(C)) = (H_2
\cap \pi^{-1}(C)) = (H)$. We can choose the representatives $H$, $H_1$ and $H_2$
such that $H_1 \cap \pi^{-1}(C) = H_2 \cap \pi^{-1}(C) = H$. The group $H$ is
non-trivial because of $\pi(H) = C$.  We conclude from
Lemma~\ref{lem:maximal_maximal_finite_subgroups}~\ref{lem:maximal_maximal_finite_subgroups:H_max}
that there exists a maximal finite subgroup $K$ which contains $H$, $H_1$ and
$H_2$ as subgroup.  In particular $K$ contains $\langle H_1,H_2 \rangle$. Hence
$\langle H_1,H_2\rangle$ is a finite subgroup of $\Gamma$. It 
satisfies $\pi(\langle H_1,H_2\rangle) = \langle D_1,D_2\rangle$.  This implies that
$\langle H_1,H_2 \rangle \cap \pi^{-1}(C) = H$ and hence that $(H)$ belongs to
$\calc(C;\langle D_1,D_2 \rangle)$. We conclude
$\calc(C;D_1) \cap \calc(C;D_2) \subseteq \calc(C;\langle D_1, D_2\rangle)$.
Since $\calc(C;D_1) \cap \calc(C;D_2) \supseteq \calc(C;\langle D_1,
D_2\rangle)$ is obviously true, Lemma~\ref{lem:calc(C;d_1)_capcalc(C,D_2)}
follows.
\end{proof}

\begin{theorem}[The order of $\calc(M)$] \label{the:calm(C)} Let $C \subseteq G$
  be a non-trivial subgroup. Let $p_1, p_2, \ldots , p_s$ be the prime numbers
  dividing $[G:C]$ for $i = 1,2 \ldots , s$. Denote by $D_i \subseteq G$ the
  subgroup such that $C \subseteq D$ and $[D:C] = p_i$.  For a non-trivial
  subset $I \subseteq \{1,2, \ldots, s\}$ denote by $D_I$ the subgroup of $G$
  uniquely determined by $[D:C] = \prod_{i \in I} p_i$.  Then
\begin{enumerate}

\item \label{the:calm(C):calm(C)}
$\calm(C) = \calc(C) \setminus \left(\bigcup_{i=1}^s \calc(C;D_i)\right);$

\item \label{the:calm(C):|calm(C)|}
$|\calm(C)| = |\calc(C)| + \sum_{I \subseteq \{1,2, \ldots, s\}, I \not= \emptyset} \;(-1)^{|I|} \cdot |\calc(D_I)|$.

\item \label{the:calm(C):|calm(C)|_and_H} 
We have
\[
|\calc(C)| = |H^1(C; L)/(G/C)| = \left|\cok\bigl((t^k - 1) \colon L \to L\bigl)/(G/C)\right|,
\]
where $t$ is a generator of $G$, $k = [G:C]$ and the $G/C$-action comes from the $G$-action on $L$;

\item \label{the:calm(C):|calm(C)|_and_H:|calm(C)_and_homology}
Put $k = [G:C]$. For $I \subseteq \{1,2, \ldots, s\}, I \not= \emptyset$ 
define $k_I := \frac{k}{\prod_{i \in I} p_i}$. Then
\begin{eqnarray*}
|\calm(C)| 
& = &
 |H^1(C; L)/(G/C)| 
\\
& & 
\quad \quad + \sum_{I \subseteq \{1,2, \ldots, s\}, I \not= \emptyset} \;(-1)^{|I|} \cdot |H^1(D_I;L)/(G/D_I)|
\\
& = &  
\left|\cok\bigl((t^{k} - 1) \colon L \to L\bigl)/(G/C)\right|
\\
& & 
\quad \quad + \sum_{I \subseteq \{1,2, \ldots, s\}, I \not= \emptyset} \;(-1)^{|I|} 
\cdot \left|\cok\bigl((t^{k_I} - 1) \colon L \to L\bigl)/(G/D_I)\right|.
\end{eqnarray*}
\end{enumerate}
\end{theorem}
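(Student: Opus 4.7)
My plan is to prove the four assertions in order, deducing each from the previous ones together with the lemmas already established.

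For part~\ref{the:calm(C):calm(C)}, I would first characterize when an element $(H) \in \calc(C)$ fails to be maximal. Suppose $H \subseteq \Gamma$ is finite with $\pi(H) = C$ and is not maximal. By Lemma~\ref{lem:maximal_maximal_finite_subgroups}~\ref{lem:maximal_maximal_finite_subgroups:H_max}, $H$ is properly contained in a unique maximal finite $H_{\max}$; set $D = \pi(H_{\max}) \supsetneq C$. Since $\Gamma$ acts freely on $L$ outside the origin, $H \cap L = H_{\max} \cap L = \{0\}$, so $|H| = |C|$ and $|H_{\max} \cap \pi^{-1}(C)| = |C|$, forcing $H = H_{\max} \cap \pi^{-1}(C)$. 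Any prime $p_i$ dividing $[D:C]$ yields $D_i \subseteq D$, and then writing $K = H_{\max} \cap \pi^{-1}(D_i)$ gives $\pi(K) = D_i$ and $H = K \cap \pi^{-1}(C)$, so $(H) \in \calc(C;D_i)$. Conversely, if $(H) = i_{C \subseteq D_i}((K))$ with $\pi(K) = D_i$, then $K \supsetneq H$ (by order count), so $H$ is not maximal. Part~\ref{the:calm(C):calm(C)} follows.

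For part~\ref{the:calm(C):|calm(C)|}, I would apply inclusion-exclusion to~\ref{the:calm(C):calm(C)}:
\[
|\calm(C)| = |\calc(C)| + \sum_{\emptyset \neq I \subseteq \{1,\ldots,s\}} (-1)^{|I|} \,\left|\bigcap_{i \in I} \calc(C;D_i)\right|.
\]
Iterating Lemma~\ref{lem:calc(C;d_1)_capcalc(C,D_2)} gives $\bigcap_{i \in I} \calc(C;D_i) = \calc(C;D_I)$, and Lemma~\ref{lem:calc(D)_to_calc(C)} applied to $C \subseteq D_I$ shows the map $i_{C \subseteq D_I} \colon \calc(D_I) \to \calc(C;D_I)$ is a bijection, so $|\calc(C;D_I)| = |\calc(D_I)|$. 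This yields the stated formula.

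Part~\ref{the:calm(C):|calm(C)|_and_H} is essentially Lemma~\ref{lem:bijection_cohomology_calc}, but with the $G$-action replaced by the $G/C$-action. The key observation is that $C = \langle t^k \rangle$ acts trivially both on $H^1(C;L)$ (by the standard fact that a group acts trivially on its own cohomology) and on $\cok((t^k-1)\colon L \to L)$ (since $t^k$ and $\id$ agree modulo the image of $t^k-1$); therefore the quotients by $G$ and by $G/C$ coincide. Finally, part~\ref{the:calm(C):|calm(C)|_and_H:|calm(C)_and_homology} follows by combining part~\ref{the:calm(C):|calm(C)|} with part~\ref{the:calm(C):|calm(C)|_and_H} applied to each $D_I$ in place of $C$, noting that $[G:D_I] = k_I$. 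The main subtleties I would need to be careful about are the order-counting argument showing $H = H_{\max} \cap \pi^{-1}(C)$ in part~\ref{the:calm(C):calm(C)}, and verifying that the $C$-action is trivial in the two settings of part~\ref{the:calm(C):|calm(C)|_and_H}.
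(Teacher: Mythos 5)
Your proposal is correct and follows essentially the same route as the paper: part~\ref{the:calm(C):calm(C)} via identifying the non-maximal classes with $\bigcup_i \calc(C;D_i)$, part~\ref{the:calm(C):|calm(C)|} via inclusion-exclusion together with Lemmas~\ref{lem:calc(D)_to_calc(C)} and~\ref{lem:calc(C;d_1)_capcalc(C,D_2)}, and parts~\ref{the:calm(C):|calm(C)|_and_H} and~\ref{the:calm(C):|calm(C)|_and_H:|calm(C)_and_homology} from Lemma~\ref{lem:bijection_cohomology_calc}. You actually supply more detail than the paper at two points --- the order-counting argument identifying $H$ with $H_{\max}\cap\pi^{-1}(C)$, and the verification that $C$ acts trivially on $H^1(C;L)$ and on $\cok(t^k-1)$ so that the $G$- and $G/C$-quotients agree --- both of which are correct and welcome.
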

\begin{proof}~\ref{the:calm(C):calm(C)}
If $D \subseteq G$ is any subgroup with $C \subsetneq D$, 
then there is $i \in \{1,2, \ldots, s\}$ with $C \subseteq D_i \subseteq D$.
Since $\calm(C)$ is the complement in $\calc(C)$ of the union of the subsets
$\calc(C;D)$ for all $D$ with $C \subsetneq D$, assertion~\ref{the:calm(C):calm(C)} follows.
\\[1mm]~\ref{the:calm(C):|calm(C)|} Obviously
\[
|\calm(C)| = |\calc(C)| - \left|\bigcup_{i = 1}^s \calc(C,D_i) \right|.
\]
By the classical Inclusion-Exclusion Principle we get
\[
\left|\bigcup_{i = 1}^s \calc(C,D_i) \right|
= \sum_{I \subseteq \{1,2, \ldots, s\}, I \not= \emptyset} \;(-1)^{|I|} \cdot 
\left|\bigcap_{i \in I} \calc(C;D_i) \right|.
\]
Since $\bigcap_{i \in I} \calc(C;D_i)  = \calc(C;D_I)$ 
by Lemma~\ref{lem:calc(C;d_1)_capcalc(C,D_2)}
and $\calc(C;D_I) = \calc(D_I)$ by Lemma~\ref{lem:calc(D)_to_calc(C)},
assertion~\ref{the:calm(C):|calm(C)|} follows.
\\[1mm]~\ref{the:calm(C):|calm(C)|_and_H}
This is a direct consequence of Lemma~\ref{lem:bijection_cohomology_calc}.
\\[1mm]~\ref{the:calm(C):|calm(C)|_and_H:|calm(C)_and_homology}
This follows directly from assertions~\ref{the:calm(C):|calm(C)|}
and~\ref{the:calm(C):|calm(C)|_and_H}.
\end{proof}

\begin{remark} Theorem~\ref{the:calm(C)} gives an explicit
formula how one can determine $|\calm(C)|$ for all subgroups $C \subseteq G$ if one knows
the numbers  
\[
|H^1(C; L)/(G/C)| = \left|\cok\bigl((t^{[G:C]} - 1) \colon L \to L\bigl)/(G/C)\right|
\]
for all subgroups $C \subseteq G$. One can even identify
$\calm(C)$ for all subgroups $C \subseteq G$ if one knows
the sets
\[
H^1(C; L)/(G/C) = \cok\bigl((t^{[G:C]} - 1) \colon L \to L\bigl)/(G/C)
\]
for all subgroups $C \subseteq G$.
\end{remark}


\typeout{-------------------------------- Section 6: The prime power case  --------------------}

\section{The prime power case}
\label{sec:The_prime_power_case}

The situation simplifies if one considers the case where $m = |G|$ is a prime power.
We conclude from Theorem~\ref{the:calm(C)}:

\begin{example}\label{exa:p-power-order}
Suppose that $|G| = p^r$ for some prime $p$ and some natural number $r$. 
Let $G_j \subseteq G$ be the subgroup of order $p^j$ for $j = 1,2, \ldots r$.
Then
\[
|\calm(G_j)| 
 = 
\begin{cases}
 |\calc(G_j)| - |\calc(G_{j+1})| & \text{if}\;  1 \le j < r;
\\
|\calc(G)| & \text{if}\;  j =  r,
\end{cases}
\]
and we have
\[
|\calc(G_j)| = \left|H^1\bigl(G_j; L)/(G/G_j\bigr)\right| 
= \left|\cok\bigl((t^{p^{r-j}} - 1) \colon L \to L\bigl)/(G/G_j)\right|.
\]
\end{example}

\begin{lemma} \label{lem:z[zeta]}
Suppose that $m = |G|$ is a prime power $p^r$. Put $\zeta = \exp(2 \pi i/p^r)$.
Let $G_j \subseteq G$ be the subgroup of order $p^j$ for $j = 1,2, \ldots , r$. 

\begin{enumerate}

\item \label{lem:z[zeta]:ZG/G_1j_iso}
There is an isomorphism
\[
H^1\bigl(G_j;\IZ[\zeta]\bigr) \cong \IZ/p[G/G_j],
\]
which is compatible with the $G/G_j$-action on the source coming from the $G$-action on
$L=\IZ[\zeta]$ and the  $G/G_j$-permutation action on the target;

\item \label{lem:z[zeta]:order}
We get for all $j = 1,2, \ldots, r$ and natural numbers $k$
\[
\left|H^1\bigl(G_j;\IZ[\zeta]^k\bigr)/(G/G_j)\right| = \left|\IZ/p[G/G_j]^k/(G/G_j)\right|,
\]
where $\IZ[\zeta]^k$ and $\IZ/p[G/G_j]^k$ denote the $k$-fold direct sum, or, 
equivalently, $k$-fold direct product.
\end{enumerate}
\end{lemma}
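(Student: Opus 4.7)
The plan is to compute $H^1(G_j;\IZ[\zeta])$ directly from the cyclic group cohomology formula, identify the result as $\IZ/p[G/G_j]$ with its regular $G/G_j$-action, and then deduce part~(ii) by additivity in the coefficients.

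For part~(i), first note that $G_j$ is generated by $t^{p^{r-j}}$, which acts on $L = \IZ[\zeta]$ by multiplication by the primitive $p^j$-th root of unity $\zeta_j := \zeta^{p^{r-j}}$. For the cyclic group $G_j$ of order $p^j$ one has
\[
H^1(G_j;\IZ[\zeta]) \;=\; \ker(N_{G_j})/\im(\zeta_j - 1),
\]
where $N_{G_j}$ acts on $\IZ[\zeta]$ as multiplication by $\sum_{i=0}^{p^j-1} \zeta_j^{i}$. Since $\zeta_j$ is a primitive $p^j$-th root of unity, this sum vanishes, so $N_{G_j}$ acts as zero and $H^1(G_j;\IZ[\zeta]) = \IZ[\zeta]/(\zeta_j - 1)\IZ[\zeta]$.

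Next I would exploit the tower $\IZ[\zeta_j] \subseteq \IZ[\zeta]$, in which $\IZ[\zeta]$ is free of rank $p^{r-j}$ over $\IZ[\zeta_j]$ on the basis $\{1, \zeta, \ldots, \zeta^{p^{r-j}-1}\}$. Evaluating the minimal polynomial of $\zeta_j$ at $1$ yields $\IZ[\zeta_j]/(\zeta_j - 1) \cong \IZ/\Phi_{p^j}(1) = \IZ/p$, so base change gives a $\IZ/p$-module isomorphism from $\IZ[\zeta]/(\zeta_j-1)\IZ[\zeta]$ onto the free $\IZ/p$-module of rank $p^{r-j}$ spanned by the reduction of the above basis. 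Since $G$ is abelian, the induced $G/G_j$-action on $H^1(G_j;\IZ[\zeta])$ is simply multiplication by $\zeta$ on this quotient, and because $\zeta^{p^{r-j}} = \zeta_j$ has been killed, multiplication by $\zeta$ cyclically permutes the basis. Identifying $\zeta^{i}$ with $t^{i} G_j \in G/G_j$ then yields the desired $G/G_j$-equivariant isomorphism with $\IZ/p[G/G_j]$ carrying its regular representation.

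Part~(ii) follows immediately: the functor $H^1(G_j;-)$ commutes with finite direct sums, so part~(i) produces a $G/G_j$-equivariant isomorphism $H^1(G_j;\IZ[\zeta]^k) \cong \IZ/p[G/G_j]^k$, and counting $G/G_j$-orbits on both sides gives the equality of cardinalities. No step presents a serious obstacle; the only mildly delicate point is the well-definedness of the $G/G_j$-action on $H^1(G_j;\IZ[\zeta])$, which is handled by the standard fact that a normal subgroup acts trivially on its own cohomology combined with the triviality of conjugation in the abelian group $G$.
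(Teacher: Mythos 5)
Your proof is correct, and it takes a genuinely different route from the paper's. You compute $H^1(G_j;\IZ[\zeta])$ directly from the cyclic cohomology formula $\ker(N_{G_j})/\im(\zeta_j-1)$, observe that the norm acts as $\sum_i \zeta_j^i = 0$, and then identify $\IZ[\zeta]/(\zeta_j-1)$ using the arithmetic of the cyclotomic tower: $\IZ[\zeta]$ is free over $\IZ[\zeta_j]$ on $1,\zeta,\dots,\zeta^{p^{r-j}-1}$ (since $\zeta^{p^{r-j}}=\zeta_j$ and the degree is $p^{r-j}$), and $\IZ[\zeta_j]/(\zeta_j-1)\cong\IZ/\Phi_{p^j}(1)=\IZ/p$, after which multiplication by $\zeta$ visibly permutes the basis cyclically, giving the regular $\IZ/p[G/G_j]$-module. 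The paper instead dimension-shifts along the exact sequence $0\to\IZ[G/G_1]\to\IZ G\to\IZ[\zeta]\to 0$ to get $H^1(G_j;\IZ[\zeta])\cong H^2(G_j;\IZ[G/G_1])$, decomposes $G/G_1$ as a $G_j$-set into copies of $G_j/G_1$ indexed by $G/G_j$, and applies Shapiro's lemma, with most of the effort spent verifying that this decomposition is compatible with the residual $G/G_j$-action. Your argument is more elementary and makes the $G/G_j$-action transparent from the outset (the only point needing care, which you address, is that the module action of $G_j$ itself on $H^1(G_j;-)$ is trivial so the action descends); the paper's homological approach avoids any explicit use of the ring structure of $\IZ[\zeta]$ and fits the pattern of arguments used elsewhere in the paper (e.g.\ the resolutions in Lemma~\ref{lem:Q[zeta]-in_A(G)_p_group_G}). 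Part~(ii) is handled identically in both, by additivity of $H^1(G_j;-)$ in the coefficients.
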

\begin{proof}~\ref{lem:z[zeta]:ZG/G_1j_iso}
 There is an exact sequence of $\IZ G$-modules
$0 \to \IZ[G/G_1] \to \IZ G \to \IZ[\zeta] \to 0$
which comes from the $G$-isomorphism $\IZ G / T \cdot \IZ G \cong \IZ[\zeta]$, where
$t \in G$ is  a generator and $T=1+t^{p^{r-1}}+t^{2p^{r-1}}+\dots+t^{(p-1)p^{r-1}}\in\IZ G$.
The associated long exact cohomology sequence induces an isomorphism, 
compatible with the $G/G_j$-actions coming from the $G$-actions on 
$\IZ[G/G_1]$ and $\IZ[\zeta]$,
\[
H^1\bigl(G_j;\IZ[\zeta]\bigr) 
\cong 
H^2\bigl(G_j;\IZ[G/G_1] \bigr). 
\]

Let $q \colon G/G_1 \to G/C_j$ be the projection. Fix a map of sets
$\sigma \colon G/G_j \to G/G_1$ such that $eG_j$ is sent to $eG_1$ and $q \circ \sigma = \id$. 
Next we define  to one another inverse $G_j/G_1$-maps
\begin{align*}
\phi \colon G/G_1 & \xrightarrow{\cong}  \coprod_{G/G_j} G_j/G_1;
\\
\psi \colon \coprod_{G/G_j} G_j/G_1 & \xrightarrow{\cong} G/G_1.
\end{align*}
The map $\phi$ sends $g G_1$ to the element $g \cdot \sigma \circ q(gG_1)^{-1} \in G_j/G_1$ 
in the summand belonging to $q(g G_1)$. The map $\psi$ sends the
element $u G_1 \in G_j/G_1$ in the summand belonging to $g G_j$ to $u
\sigma(gG_j)$. There is an obvious $G$-action on $G/G_1$.  There is precisely
one $G$-action on $\coprod_{G/G_j} G_j/G_1$ for which $\phi$ and $\psi$ become
$G$-maps.  Namely, given $g_0 \in G$ and an element $u G_1 \in G_j/G_1$ in the
summand belonging to $g G_j$, the action of $g_0$ on $u G_1 \in G_j/G_1$ yields
the element $u_0 \cdot uG_1$ in the summand belonging to $g_0g G_j$ for the
element $u_0 = g_0 \cdot \sigma(gG_j) \cdot \sigma(q(g_0) \cdot
gC_{p^{j}})^{-1}$ in $G_j$.  The map $H^2\bigl(G_j,\IZ[G_j/G_1]\bigr) \to
H^2\bigl(G_j,\IZ[G_j/G_1]\bigr)$ induced by multiplication with $u_0$ on
$\IZ[G_j/G_1]$ is the identity since for a projective $\IZ[G_j]$-resolution
$P_*$ the $\IZ[G_j]$-chain map $P_* \to P_*$ given by multiplication with $u_0$
induces the identity on homology and hence is $\IZ[G_j]$-chain homotopic to the
identity. Hence the isomorphism induced by $\phi$
\[
H^2\bigl(G_j;\IZ[G/G_1] \bigr) \xrightarrow{\cong}
\bigoplus_{G/G_j} H^2\bigl(G_j;\IZ[G_j/G_1] \bigr)
\]
is compatible with the $G/G_j$-actions if we use on the source the one coming from the $G$-action on 
$\IZ[G/G_1]$ and on the target the $G/G_j$-permutation action. By Shapiro's lemma
(see~\cite[(5.2) in VI.5 on page~136]{Brown(1982)})
\[
H^2\bigl(G_j;\IZ[G_j/G_1] \bigr) \cong H^2\bigl(G_1;\IZ\bigr) \cong H^2/\IZ/p) \cong \IZ/p.
\]
Now assertion~\ref{lem:z[zeta]:ZG/G_1j_iso} follows.
\\[1mm]~\ref{lem:z[zeta]:order} 
We get from assertion~\ref{lem:z[zeta]:ZG/G_1j_iso} bijection of $G/G_j$-sets
\[
H^1\bigl(G_j;\IZ[\zeta]^k\bigr)
\cong 
H^1\bigl(G_j;\IZ[\zeta]\bigr)^k
\cong \IZ/p\bigl[G/G_j\bigr]^k.
\]
It induces a bijection
\[
H^1\bigl(G_j;\IZ[\zeta]^k\bigr)/(G/G_j) \xrightarrow{\cong}
\IZ/p\bigl[G/G_j\bigr]^k/(G/G_j).
\]
\end{proof}

\begin{lemma} \label{lem:map(Hk,P)/H)_parity} 
   Let $p$ be a prime. Put $P = \{0,1, \ldots, (p-1)\}$. 
  Let $H$ be a finite cyclic group of order $p^r$ for $r \ge 1$. Let $k$ be a natural number. 
  Define for a set $T$
  \begin{eqnarray*}
  map(T,P)_{\ev} &:= & \bigl\{f \colon T \to P \mid \sum_{h \in H} f(h) \equiv 0 \mod 2\bigr\};
   \\
  map(T,P)_{\odd} &:= & \bigl\{f \colon T \to P \mid \sum_{h \in H} f(h) \equiv 1 \mod 2\bigr\}.
  \end{eqnarray*}
  Denote by $H_j$ the subgroup of $H$ of order $p^j$
  for $j = 0,1,2, \ldots, r$. Then

\begin{enumerate}

\item \label{lem:map(Hk,P)/H):ev-odd} 
We get in the Burnside ring $A(H)$ for $p \not= 2$
\begin{eqnarray*}
\bigl(\map(H,P)_{\ev} - \map(H,P)_{\odd}\bigr)^k 
& = & 
[H/H], 
\end{eqnarray*}
and for $p = 2$
\begin{multline*}
\left(\bigl[\map(H,P)_{\ev}\bigr] - \bigl[\map(H,P)_{\odd}\bigr]\right)^k
\\
 =  
2^k \cdot [H/H]  - 2^{k2^{r-1}-r} \cdot [H] +  \sum_{i=1}^{r-1} \bigl(2^{k2^{r - i}-r +i} - 2^{k2^{r-i-1}-r+i}\bigr) \cdot [H/H_i];
\end{multline*}

\item \label{lem:map(Hk,P)/H):all} 
We get for all primes $p$ in $A(H)$
\begin{eqnarray*}
\bigl[\map(H,P)^k\bigr] 
& = & 
p^k \cdot [H/H] + \sum_{i=0}^{r-1} \bigl(p^{kp^{r - i}-r +i} - p^{kp^{r-i-1}-r +i}\bigr) \cdot [H/H_i].
\end{eqnarray*}
Moreover, we have for all primes $p$ the equality of integers
\[
\bigl|\map(H,P)^k/H\bigr| = p^k + \sum_{i=0}^{r-1} \bigl(p^{kp^{r - i}-r+i} - p^{kp^{r-i-1}-r+i}\bigr).
\]
\end{enumerate}
\end{lemma}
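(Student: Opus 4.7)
My plan is to compute everything via the marks homomorphism on the Burnside ring $A(H)$. Since $H = \langle t \rangle$ is cyclic of order $p^r$, its subgroups form the single chain $H_0 \subset H_1 \subset \cdots \subset H_r = H$, so any virtual $H$-set $Y$ has a unique expansion $[Y] = \sum_{j=0}^{r} a_j \cdot [H/H_j]$ in $A(H)$, recoverable from fixed-point counts by the triangular relation
\[
|Y^{H_i}| = \sum_{j=i}^{r} a_j \cdot p^{r-j},
\]
which inverts to $a_r = |Y^{H_r}|$ and $a_i = p^{i-r}\bigl(|Y^{H_i}| - |Y^{H_{i+1}}|\bigr)$ for $0 \le i < r$. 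Multiplicativity of fixed points, $|(Y \cdot Y')^{H_j}| = |Y^{H_j}|\cdot |Y'^{H_j}|$, reduces every mark of a $k$-th power to the $k$-th power of a single mark.

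For part~\ref{lem:map(Hk,P)/H):all} I would observe that a map $f \colon H \to P^k$ is $H_i$-fixed iff it is constant on $H_i$-cosets, so $|\map(H,P^k)^{H_i}| = (p^k)^{p^{r-i}} = p^{k p^{r-i}}$. Inserting this into the inversion formula gives immediately $a_r = p^k$ and, for $i<r$,
\[
a_i = p^{i-r}\bigl(p^{k p^{r-i}} - p^{k p^{r-i-1}}\bigr) = p^{k p^{r-i}-r+i} - p^{k p^{r-i-1}-r+i},
\]
which is the claimed Burnside-ring identity. Summing coefficients (each $[H/H_j]$ contributes exactly one $H$-orbit) produces the stated value of $|\map(H,P)^k/H|$.

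For part~\ref{lem:map(Hk,P)/H):ev-odd}, set $Y := \map(H,P)_{\ev} - \map(H,P)_{\odd} \in A(H)$; this makes sense because the translation $H$-action preserves $\sum_{h \in H} f(h)$ and hence its parity. For an $H_j$-fixed $f$ encoded by $\bar f \colon H/H_j \to P$ we have $\sum_h f(h) = p^j \sum_{\bar h} \bar f(\bar h)$, so the even-minus-odd count factorises:
\[
|Y^{H_j}| \;=\; \Bigl(\sum_{a=0}^{p-1}(-1)^{p^j a}\Bigr)^{p^{r-j}}.
\]
The inner sum equals $1$ whenever $p$ is odd, equals $0$ when $p=2$ and $j=0$, and equals $2$ when $p=2$ and $j \ge 1$. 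For $p$ odd all marks of $Y^k$ therefore equal $1$, and the inversion formula collapses to $[Y^k] = [H/H]$. For $p=2$ one has $|Y^{H_0}|^k = 0$ and $|Y^{H_j}|^k = 2^{k\cdot 2^{r-j}}$ for $j \ge 1$; plugging these into the inversion formula yields $a_r = 2^k$, the middle terms $a_i = 2^{k\cdot 2^{r-i}-r+i} - 2^{k\cdot 2^{r-i-1}-r+i}$ for $1 \le i < r$, and the boundary term $a_0 = 2^{-r}\bigl(0 - 2^{k\cdot 2^{r-1}}\bigr) = -2^{k\cdot 2^{r-1}-r}$, matching the claimed expansion once one identifies $[H] = [H/H_0]$.

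The only non-routine ingredient is the generating-function identity that isolates the parity weight; after that the argument is linear algebra against an upper-triangular mark matrix, and the main obstacle is simply bookkeeping, particularly segregating the $j=0$ boundary case when $p=2$ where the inner sum vanishes and forces a minus sign that breaks the otherwise uniform telescoping pattern.
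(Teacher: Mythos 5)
Your proof is correct and follows essentially the same route as the paper: both compute the marks $|Y^{H_j}|$ at each subgroup of the chain $H_0 \subset \cdots \subset H_r$, exploit that the character map is a ring homomorphism to reduce powers to powers of marks, and invert the triangular relation $|Y^{H_i}| = \sum_{j \ge i} a_j p^{r-j}$. The only cosmetic difference is that you evaluate the even-minus-odd mark in one stroke via the sign sum $\bigl(\sum_{a=0}^{p-1}(-1)^{p^j a}\bigr)^{p^{r-j}}$, whereas the paper counts $\map(H,P)_{\ev}^{H_j}$ and $\map(H,P)_{\odd}^{H_j}$ separately before subtracting; the content is identical.
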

\begin{proof}
 For $i = 0,1,2, \ldots ,r$ we have the ring homomorphism
  \[
   \ch_i \colon A(H) \to  \IZ, \quad [S] \mapsto \bigl|S^{H_i}\bigr|.
   \]
  Consider the element in $A(H)$
  \[
  x = \sum_{j = 0}^r a_j \cdot [H/H_j].
  \]
  Then we get $\ch_i(x) = \sum_{j = i}^r a_j \cdot p^{r-j}$ for $i = 0,1,2, \ldots, r$.
  This implies 
  \begin{eqnarray}
   a_j = 
   \begin{cases}
    \ch_j(x) & j = r;
    \\
    \frac{\ch_j(x) - \ch_{j+1}(x)}{p^{r-j}}
    & j = 0,1,2, \ldots, (r-1).
  \end{cases}
  \label{a_j_from_ch}
  \end{eqnarray}  
 One easily checks for a set $T$ by induction over its cardinality
 \begin{eqnarray}
 \map(T,P)_{\ev} 
  & = &
 \begin{cases}
  \frac{p^{|T|}+1}{2} & p \not= 2;
  \\
  2^{|T|-1}& p =2; 
\end{cases}
  \label{cardinality_even}
  \\
  \map(T,P)_{\odd} 
  & = &
 \begin{cases}
  \frac{p^{|T|}-1}{2} & p \not= 2;
  \\
  2^{|T|-1} & p =2.
\end{cases}
\label{cardinality_odd}
\end{eqnarray}
Let $\pr_j \colon H\to H/H_j$ be the projection. It induces a bijection
\begin{eqnarray}
& \pr_j^* \colon \map(H/H_j,P) \xrightarrow{\cong} \map(H,P)^{H_j}. &
\label{map(H,P)H__andmap(H(H_j,P)}
\end{eqnarray}
Given $f \colon H/H_j \to P$, we get
\[
|H_j| \cdot \sum_{hH_j \in H/H_j} f(hH_j) = \sum_{h \in H} \pr_j^*(f)(h).
\]
Hence $\pr_j^*$ induces bijections
\begin{eqnarray}
\map(H/H_j,P)_{\ev} & \cong & \bigl(\map(H,P)_{\ev}\bigr)^{H_j};
\label{ev_and_quotients_p_not_2}
\\
\map(H/H_j,P)_{\odd} & \cong & \bigl(\map(H,P)_{\odd}\bigr)^{H_j},
\label{odd_and_quotients_p_not_2}
\end{eqnarray}
provided that $p \not= 2$. If $p = 2$, we get for $j \ge 1$
\begin{eqnarray}
\bigl(\map(H,P)_{\odd}\bigr)^{H_j} = \emptyset.
\label{odd_for_p_is_2}
\end{eqnarray}      
We conclude from~\eqref{cardinality_even} and~\eqref{ev_and_quotients_p_not_2} for $p \not= 2$
\begin{eqnarray}
\ch_j\bigl([\map(H,P)_{\ev}]\bigr)
\label{ch_j(ev_p_odd)}
& = & 
\left|\map(H,P)_{\ev}^{H_j}\right|
\\
& = & 
\left|\map(H/H_j,P)_{\ev}\right|
\nonumber
\\
& = & 
\frac{p^{p^{r-j}} + 1}{2}.
\nonumber
\end{eqnarray}
Analogously we conclude from~\eqref{cardinality_odd} and~\eqref{odd_and_quotients_p_not_2} for $p\not= 2$
\begin{eqnarray}
\ch_j\bigl([\map(H,P)_{\odd}]\bigr)
& = &
\frac{p^{p^{r-j}}-1}{2},
\label{ch_j(odd_p_odd)}
\end{eqnarray}
We conclude from~\eqref{map(H,P)H__andmap(H(H_j,P)} for all primes $p$
\begin{eqnarray}
\ch_j\bigl([\map(H,P)^k]\bigr)
=
p^{kp^{r-j}}.
\label{ch_j_all}
\end{eqnarray}
We conclude from~\eqref{odd_for_p_is_2} for $p = 2$ for $j \ge 1$
\begin{eqnarray}
\ch_j\bigl(\map(H,P)_{\odd}\bigr)
& = &
0. 
\label{ch_j_p_is_2)}
\end{eqnarray}
Now we are ready to prove assertion~\ref{lem:map(Hk,P)/H):ev-odd}. 
We conclude from~\eqref{ch_j(ev_p_odd)} and~\eqref{ch_j(odd_p_odd)} for $p\not= 2$ and $j = 0,1,\ldots, r$
\begin{eqnarray*}
\lefteqn{\ch_j\left(\big([\map(H,P)_{\ev}] - [\map(H,P)_{\odd}]\bigr)^k\right)}
& & 
\\
& = &
\left(\ch_j\bigl([\map(H,P)_{\ev}]\bigr) - \ch_j\bigl(l[\map(H,P)_{\odd}]\bigr)\right)^k
\\
& = &
\left(\frac{p^{p^{r-j}}+1}{2}  - \frac{p^{p^{r-j}}-1}{2}\right)^k
\\
& = & 
1.
\end{eqnarray*}
Hence we obtain  from~\eqref{a_j_from_ch} for $p\not= 2 $  the equation in $A(H)$
\begin{eqnarray*}
\left([\map(H,P)_{\ev}] - [\map(H,P)_{\odd}]\right)^k 
& = & 
[H/H].
\end{eqnarray*}
We conclude from~\eqref{ch_j_all}  and~\eqref{ch_j_p_is_2)} for $p = 2$ and $j \ge 1$
\begin{eqnarray}
\label{chi_j_even-odd_p_is_2_j_ge_1}
\lefteqn{\ch_j\left(\bigl([\map(H,P)_{\ev}] - [\map(H,P)_{\odd}]\bigr)^k\right)}
& & 
\\
& = & 
\ch_j\left(\bigl([\map(H,P)] - 2 \cdot [\map(H,P)_{\odd}]\bigr)^k\right)
\nonumber
\\ 
& = & 
\left(\ch_j\bigl([\map(H,P)]\bigr) - 2 \cdot \ch_j\bigl([\map(H,P)_{\odd}]\bigr)\right)^k 
\nonumber
\\
& = & 
\ch_j\bigl([\map(H,P)^k]\bigr)
\nonumber
\\ 
& = & 
2^{k2^{r-j}}.
\nonumber
\end{eqnarray}
For $p = 2 $ and $j = 0$ we conclude from~\eqref{cardinality_even} and~\eqref{cardinality_odd}
\begin{eqnarray}
\label{chi_j_even-odd_p_is_2_j_is-0}
\lefteqn{\ch_0\left(\bigl([\map(H,P)_{\ev}] - [\map(H,P)_{\odd}]\bigr)^k\right)}
& & 
\\
& = & \left(\bigl|\map(H,P)_{\ev}\bigr| - \bigl|\map(H,P)_{\odd}\bigr|\right)^k
\nonumber
\\
& = & 
0^k
\nonumber
\\ 
& =& 
0.
\nonumber
\end{eqnarray}
Equations~\eqref{a_j_from_ch}, \eqref{chi_j_even-odd_p_is_2_j_ge_1} and~\eqref{chi_j_even-odd_p_is_2_j_is-0} imply for $p = 2$
\begin{multline*}
\bigl([\map(H,P)_{\ev}] - [\map(H,P)_{\odd}]\bigr)^k
\\
 =  
2^k \cdot [H/H]  - 2^{k2^{r-1}-r} \cdot [H] +  \sum_{i=1}^{r-1} \bigl(2^{k2^{r - i}-r +i} - 2^{k2^{r-i-1}-r+i}\bigr) \cdot [H/H_i]. 
\end{multline*}
This finishes the proof of assertion~\ref{lem:map(Hk,P)/H):ev-odd}.

Finally we prove assertion~\ref{lem:map(Hk,P)/H):all}.
We conclude from~\eqref{a_j_from_ch}  and~\eqref{ch_j_all}
\begin{eqnarray}
\quad \quad [\map(H,P)^k] 
& = & 
p^k \cdot [H/H] + \sum_{i=0}^{r-1} \frac{p^{kp^{r - i}} - p^{kp^{r-i-1}}}{p^{r-i}} \cdot [H/H_i]
\label{[map(H,P)k_in_A(H)}
\\
& = & 
p^k \cdot [H/H] + \sum_{i=0}^{r-1} \bigl(p^{kp^{r - i}-r+i} - p^{kp^{r-i-1}-r +i}\bigr)\cdot [H/H_i].
\nonumber
\end{eqnarray}
The last  equation implies
\[
\bigl|\map(H,P)^k/H\bigr| = p^k + \sum_{i=0}^{r-1} \left(p^{kp^{r - i}-r+i} - p^{kp^{r-i-1}-r+i}\right).
\]
This finishes the proof of Lemma~\ref{lem:map(Hk,P)/H)_parity}.
\end{proof}

\begin{theorem}[Prime power case]\label{the:prime_power_case}
Suppose that $m = |G|$ is a prime power $p^r$ for $r \ge 1$. Let $G_j \subseteq G$ be the subgroup of order
$p^j$ for $j = 0,1,2 \ldots,  r$.

Then there is natural number $k$ uniquely determined by $n = p^{r-1} \cdot (p-1) \cdot k$. 
We obtain  for  $ j = 1,2, \ldots,  r$ in $A(G)$
\[
\bigl[H^1(G_j;L)\bigr] = 
p^k \cdot [G/G] + \sum_{i = 0}^{r-j-1} \bigl(p^{kp^{r-j-i}-r +j+i} - p^{kp^{r-j-1-i}-r+j+i}\bigr) \cdot [G/G_{i+j}],
\]
and 
\[
\bigl|\calm(G_j)\bigr| 
=  \begin{cases}
p^{kp^{r-j}-r+j} - p^{kp^{r-j -1}-r+j} & j= 1,2 \ldots, (r-1);
\\
p^k   & j = r.
\end{cases}
\]
\end{theorem}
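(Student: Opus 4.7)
The plan is to reduce the computation of $H^1(G_j;L)$ to the case $L=\IZ[\zeta]^k$ already treated in Lemma~\ref{lem:z[zeta]}, and then to apply Lemma~\ref{lem:map(Hk,P)/H)_parity} to read off the decomposition in $A(G)$. I would begin by pinning down $L$ rationally. Since $L^{G_j}=0$ for every non-trivial subgroup $G_j\subseteq G$, the $\IQ G$-module $L\otimes_\IZ\IQ$ can contain no summand of type $\IQ(\zeta_{p^i})$ with $i<r$, because on such a summand $G_1$ would act trivially. Hence $L\otimes\IQ\cong\IQ(\zeta)^k$ with $\zeta=\zeta_{p^r}$, which forces $n=k\varphi(p^r)=kp^{r-1}(p-1)$ and establishes the existence of $k$. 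Since $L$ is torsion-free and $\Phi_{p^r}(t)$ annihilates $L\otimes\IQ$, it annihilates $L$, so that $L$ becomes a module over $\IZ G/\Phi_{p^r}(t)=\IZ[\zeta]$. It is torsion-free of rank $k$ over the Dedekind ring $\IZ[\zeta]$, hence $L\cong\IZ[\zeta]^{k-1}\oplus I$ for some fractional ideal $I$.

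Next I would identify $H^1(G_j;L)$. Writing $\alpha_j=\zeta^{p^{r-j}}-1$, the element $t^{p^{r-j}}-1$ acts on $L$ as multiplication by $\alpha_j$. Since $N_{G_j}\cdot L\subseteq L^{G_j}=0$, the norm map is zero and $H^1(G_j;L)=L/\alpha_j L$. To reduce to the free case, I would observe that $\alpha_j=\eta-1$ for the primitive $p^j$-th root of unity $\eta=\zeta^{p^{r-j}}$, that $N_{\IQ(\zeta)/\IQ}(\eta-1)=p^{p^{r-j}}$, and that $\mathfrak{p}=(\zeta-1)$ is the unique prime of $\IZ[\zeta]$ above $p$. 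These facts force $(\alpha_j)=\mathfrak{p}^{p^{r-j}}$, so $\IZ[\zeta]/\alpha_j\IZ[\zeta]$ is a local ring. Any invertible module over a local ring is free of rank one, so $I/\alpha_jI\cong\IZ[\zeta]/\alpha_j$, and therefore
\[
H^1(G_j;L)\;\cong\;(\IZ[\zeta]/\alpha_j\IZ[\zeta])^k
\]
as $\IZ[\zeta]$-modules, and a fortiori as $G/G_j$-modules, independently of the class of $I$.

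Finally, Lemma~\ref{lem:z[zeta]}~\ref{lem:z[zeta]:ZG/G_1j_iso} identifies $\IZ[\zeta]/\alpha_j\cong H^1(G_j;\IZ[\zeta])$ with $\IZ/p[G/G_j]$ as $G/G_j$-sets carrying the permutation action, so that $H^1(G_j;L)\cong\map(G/G_j,P^k)$ with $P=\{0,1,\ldots,p-1\}$. Applying Lemma~\ref{lem:map(Hk,P)/H)_parity}~\ref{lem:map(Hk,P)/H):all} with $H=G/G_j$ of order $p^{r-j}$ (so the exponent $r$ in the lemma is replaced by $r-j$), and then pushing forward along the projection $G\to G/G_j$, which sends $[H/H_i]$ to $[G/G_{j+i}]$, produces the stated identity for $[H^1(G_j;L)]$ in $A(G)$. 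For $|\calm(G_j)|$, Example~\ref{exa:p-power-order} reduces everything to the computation of $|\calc(G_j)|=|H^1(G_j;L)/(G/G_j)|$, which is read off from Lemma~\ref{lem:map(Hk,P)/H)_parity}~\ref{lem:map(Hk,P)/H):all}; the differences $|\calc(G_j)|-|\calc(G_{j+1})|$ then telescope, leaving only the top term $p^{kp^{r-j}-r+j}-p^{kp^{r-j-1}-r+j}$, while for $j=r$ only the constant $p^k$ survives. The main obstacle is precisely this insensitivity of $H^1(G_j;L)$ to the ideal class of $I$: the hypotheses do not supply a $\IZ G$-isomorphism $L\cong\IZ[\zeta]^k$, and such an isomorphism need not exist; the locality of the quotient ring $\IZ[\zeta]/\alpha_j\IZ[\zeta]$ is what makes the cohomology computation collapse to the free case.
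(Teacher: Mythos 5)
Your argument is correct, and its overall skeleton coincides with the paper's: identify $H^1(G_j;L)$ with $H^1(G_j;\IZ[\zeta]^k)$, invoke Lemma~\ref{lem:z[zeta]} and Lemma~\ref{lem:map(Hk,P)/H)_parity}~\ref{lem:map(Hk,P)/H):all} for $H=G/G_j$, and then telescope the differences $|\calc(G_j)|-|\calc(G_{j+1})|$ via Example~\ref{exa:p-power-order}. The one place where you genuinely diverge is the step you yourself single out as the main obstacle, namely why the answer is insensitive to the fact that $L$ need not be $\IZ G$-isomorphic to $\IZ[\zeta]^k$. The paper handles this by localizing at $p$: since $G_j$ is a $p$-group, $H^1(G_j;-)$ only sees $L_{(p)}$ (citing~\cite[Theorem~10.3 in~III.10]{Brown(1982)}), and $L_{(p)}\cong(\IZ[\zeta]_{(p)})^k$ because $\IZ[\zeta]_{(p)}$ is a discrete valuation ring. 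You instead work integrally, writing $L\cong\IZ[\zeta]^{k-1}\oplus I$ by the Steinitz structure theorem and observing that $I/\alpha_j I$ is an invertible module over the local ring $\IZ[\zeta]/(\alpha_j)=\IZ[\zeta]/\mathfrak{p}^{p^{r-j}}$, hence free of rank one; your identification $H^1(G_j;L)=L/\alpha_j L$ (the norm map vanishing because $L^{G_j}=0$) is exactly the paper's Lemma~\ref{lem:bijection_cohomology_calc}. Both routes ultimately exploit that $p$ is totally ramified in $\IQ(\zeta)$, so the difference is one of packaging rather than substance; your version has the small merit of making explicit where the ideal class of $L$ gets killed, while the paper's is shorter because the localization lemma does all the work in one stroke. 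All the supporting computations you use (the norm of $\eta-1$, the telescoping, the passage from $A(G/G_j)$ to $A(G)$) check out against the paper's.
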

\begin{proof}
We get from Theorem~\ref{the:calm(C)}~\ref{the:calm(C):|calm(C)|_and_H}, 
\begin{eqnarray}
\bigl|\calc(G_j)\bigr|
 =  
\bigl|H^1(G_j;L)/(G/G_j)\bigr|.
\label{calc(G_j)_and_H(G_j;L)}
\end{eqnarray}
Choose $k$ such that the $\IZ_{(p)}[G]$-module $L_{(p)}$ is isomorphic to 
$(\IZ[\zeta]_{(p)})^k$. Then 
\begin{multline*}
n = \rk_{\IZ}(L) = \rk_{\IZ_{(p)}}(L_{(p)}) = \rk_{\IZ_{(p)}}(\IZ[\zeta]_{(p)}^k) 
= \rk_{\IZ}(\IZ[\zeta]^k)  
\\
= k \cdot \rk_{\IZ}(\IZ[\zeta]) = k \cdot p^{r-1} \cdot (p-1).
\end{multline*}
We obtain from~\cite[Theorem~10.3 in~III.10. on page~84]{Brown(1982)} an isomorphism of $\IZ [G/G_j]$-modules
\[
H^1(G_j;L)
\cong
H^1(G_j; \IZ[\zeta]^k).
\]
Hence we conclude from~\eqref{calc(G_j)_and_H(G_j;L)}
\begin{eqnarray}
\bigl|\calc(G_j)\bigr|
& =&
\bigl|H^1(G_j; \IZ[\zeta]^k)/(G/G_j)\bigr|,
\label{calc_and_h1}
\end{eqnarray}
and we get in $A(G/G_j)$
\begin{eqnarray}
\bigl[H^1(G_j; L)\bigr] 
& = & 
\bigl[H^1(G_j; \IZ[\zeta]^k)\bigr].
\label{eqa_in_A/G(G_j)_L_to_zeta}
\end{eqnarray}
From Lemma~\ref{lem:z[zeta]}~\ref{lem:z[zeta]:ZG/G_1j_iso}
and Lemma~\ref{lem:map(Hk,P)/H)_parity}~\ref{lem:map(Hk,P)/H):all}   applied to $H = G/G_j$ we obtain
the equation in $A(G/G_j)$ for $j = 1,2, \ldots, r$
\begin{eqnarray}
\label{eqa_in_A/G(G_j)_A(G/G_j)}
& & 
\\
\bigl[H^1(G_j;\IZ[\zeta]^k)\bigr]
& = & 
\bigl[H^1(G_j;\IZ[\zeta])^k\bigr] 
\nonumber
\\
& = & 
\bigl[\map(G/G_j,\IZ/p)^k\bigr]
\nonumber
\\
& =  &
p^k \cdot [(G/G_j)/(G/G_j)] 
\nonumber
\\& & \quad  + \sum_{i = 0}^{r-j-1} \bigl(p^{kp^{r-j-i}-r+j+i}-p^{kp^{r-j-i-1}-r+j+i}\bigr) \cdot [(G/G_j)/G_{i+j}/G_j].
\nonumber
\end{eqnarray}

From~\eqref{eqa_in_A/G(G_j)_L_to_zeta} and~\eqref{eqa_in_A/G(G_j)_A(G/G_j)} we obtain the following equality in $A(G)$
for $j = 1,2, \ldots, r$
\[
\bigl[H^1(G_j;L)\bigr] 
= p^k \cdot [G/G] + \sum_{i = 0}^{r-j-1} \bigl(p^{kp^{r-j-i}-r+j-i}-p^{kp^{r-j-i-1}-r+j+i}\bigr) \cdot [G/G_{i+j}].
\]
This together with~\eqref{calc(G_j)_and_H(G_j;L)}  implies for $j = 1,2,\ldots, r$
\begin{eqnarray*}
\bigl|\calc(G_j)\bigr|
& = & 
 p^k  + \sum_{i=0}^{r-j-1} p^{kp^{r-j - i}-r +j+i} - p^{kp^{r-j - i-1}-r+j+i}.
\end{eqnarray*}
Now the claim follows from Example~\ref{exa:p-power-order} by the following calculation  for $j = 1,2, \ldots, (r-1)$
\begin{eqnarray*}
\lefteqn{p^k  + \sum_{i=0}^{r-j-1} \bigl(p^{kp^{r-j - i}-r+j+i} - p^{kp^{r-j - i-1}-r+j+i}\bigr)}
\\ 
& & \quad -p^k  - \sum_{i=0}^{r-j-2} \bigl(p^{kp^{r-j -1 - i}+r +j+1+i} - p^{kp^{r-j -1 - i-1}-r+j+1+i}\bigr)
\\
& = & 
\sum_{i=0}^{r-j-1} \bigl(p^{kp^{r-j - i}-r+j+i} - p^{kp^{r-j - i-1}-r+j+i}\bigr)
\\
& & \quad - \sum_{i=0}^{r-j-2} \bigl(p^{kp^{r-j -1 - i}+r +j+1+i} - p^{kp^{r-j -1 - i-1}-r+j+1+i}\bigr)
\\
& = & 
\sum_{i=0}^{r-j-1} \bigl(p^{kp^{r-j - i}-r+j+i} - p^{kp^{r-j - i-1}-r+j+i}\bigr)
\\
& & \quad - \sum_{i=1}^{r-j-1} \bigl(p^{kp^{r-j - i}+r +j+i} - p^{kp^{r-j - i-1}-r+j+i}\bigr)
\\
& = & 
p^{kp^{r-j}-r+j} - p^{kp^{r-j -1}-r+j}.
\end{eqnarray*}
\end{proof}

\begin{notation}\label{not:lambda_something}
Given a commutative ring $R$ and an $R$-module $M$, denote by $\Lambda^j M = \Lambda_R^j M$
its $j$-th exterior power. Define $R$-modules
\begin{eqnarray*}
\Lambda^{\odd} M & := & \bigoplus_{l \ge 0} \Lambda^{2l + 1} M;
\\
\Lambda^{\ev} M & := & \bigoplus_{l \ge 0} \Lambda^{2l} M;
\\
\Lambda^{\all} M & := &  \bigoplus_{j\ge 0} \Lambda^{j} M.
\end{eqnarray*}
Define for a rational $G$-representation $V$  classes in $R_{\IQ}(G)$
\begin{eqnarray*}
\bigl[\Lambda^{\all} V\bigr] & := &  \sum_{j\ge 0} \bigl[ \Lambda^{j} V\bigr];
\\
\bigl[\Lambda^{\alt} V\bigr] & := &  \sum_{j\ge 0} (-1)^j \cdot \bigl[ \Lambda^{j} V\bigr];
\end{eqnarray*}
\end{notation}

\begin{lemma} \label{lem:Q[zeta]-in_A(G)_p_group_G}
Suppose that $m = p^r$ for a prime $p$ and an integer $r \ge 1$. Let $k$ be a natural number. 
Put $\xi = \exp(2\pi i/m)$. Let $\IQ$ be the trivial $\IQ G$-module of dimension one.

\begin{enumerate}
\item \label{lem:Q[zeta]-in_A(G)_p_group_G:all_p_not_2}
If $p \not= 2$, we get in $R_{\IQ}(G)$
\begin{eqnarray*}
\bigl[\Lambda^{\all} (\IQ[\zeta]^k)\bigr]
& = & 
[\IQ] +  \frac{2^{k \cdot (p-1) \cdot p^{r-1}}-1}{p^r} \cdot [\IQ G];
\end{eqnarray*}
\item \label{lem:Q[zeta]-in_A(G)_p_group_G:all_p_is_2}
Suppose $p = 2$. If $r = 1$, we get in $R_{\IQ}(G)$ 
\[
\bigl[\Lambda^{\all} (\IQ[\zeta]^k)\bigr]
=  2^{k-1} \cdot [\IQ G]. 
\]
If $r \ge 2$, we get in $R_{\IQ}(G)$ 
\begin{multline*}
\bigl[\Lambda^{\all} (\IQ[\zeta]^k)\bigr]
 =
2^k \cdot [\IQ ]   - 2^{k2^{r-2} -r +1} \cdot \bigl[\IQ[G/G_1]\bigr]
\\
+  \sum_{i=2}^{r-1} \bigl(2^{k2^{r - i}-r +i} - 2^{k2^{r-1-i}-r+i}\bigr) \cdot \bigl[\IQ[G/G_{i}]\bigr]  + 2^{k \cdot 2^{r-1}-r} \cdot  [\IQ G];
\end{multline*}
\item \label{lem:Q[zeta]-in_A(G)_p_group_G:alternating}
We get in $R_{\IQ}(G)$
\begin{multline*}
\bigl[\Lambda^{\alt} (\IQ[\zeta]^k)\bigr]
= 
p^k \cdot [\IQ] - p^{kp^{r -1}-r} \cdot \bigl[\IQ G\bigr]
\\
+\sum_{i=1}^{r-1} \bigl(p^{kp^{r -i}-r+i} 
- p^{kp^{r-i-1}-r + i}\bigr)\cdot \bigl[\IQ[G/G_{i}]\bigr]. 
\end{multline*}
\end{enumerate}
\end{lemma}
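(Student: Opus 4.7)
The plan is to use that the restriction map $R_\IQ(G) \to R_\IC(G)$ is injective (a standard fact for cyclic groups, or more generally because distinct irreducible $\IQ G$-modules have distinct $\IC G$-character supports), so that a class in $R_\IQ(G)$ is determined by its complex character. For a $\IQ G$-module $V$ and $g \in G$, the characters of the two classes in question are
\[
\chi_{[\Lambda^{\all} V]}(g) = \det(1 + g \mid V \otimes_\IQ \IC), \qquad \chi_{[\Lambda^{\alt} V]}(g) = \det(1 - g \mid V \otimes_\IQ \IC).
\]
After computing these characters on a generator of each subgroup $G_s \subseteq G$, verifying the three asserted formulas reduces to checking that the right-hand sides produce the same character values.

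First I would decompose $\IC \otimes_\IQ \IQ[\zeta]$ over $\IC$ as $\bigoplus_{a \in (\IZ/p^r)^\times} \IC_{(a)}$, where $t$ acts on $\IC_{(a)}$ by multiplication by $\zeta^a$. For $s \ge 1$, fix $g := t^{p^{r-s}}$, a generator of $G_s$, and set $\eta := \zeta^{p^{r-s}}$, a primitive $p^s$-th root of unity. Because the reduction $(\IZ/p^r)^\times \twoheadrightarrow (\IZ/p^s)^\times$ is $p^{r-s}$-to-one, the product $\det(1 \pm g \mid \IQ[\zeta]^k) = \det(1 \pm g \mid \IQ[\zeta])^k$ equals
\[
\Bigl(\prod_{b \in (\IZ/p^s)^\times} (1 \pm \eta^b)\Bigr)^{k p^{r-s}}.
\]
Since $\Phi_{p^s}(x) = \prod_{b}(x - \eta^b) = \Phi_p(x^{p^{s-1}})$, this product evaluates to $\Phi_{p^s}(1) = p$ in the minus sign case, and in the plus sign case to $(-1)^{\phi(p^s)} \Phi_{p^s}(-1)$, which equals $1$ when $p$ is odd, equals $0$ when $p = 2$ and $s = 1$, and equals $2$ when $p = 2$ and $s \ge 2$. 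At the identity element, $\det(1+e \mid \IQ[\zeta]^k) = 2^{k(p-1)p^{r-1}}$ and $\det(1-e \mid \IQ[\zeta]^k) = 0$.

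With these character values in hand, I would identify the classes by comparing characters in the basis $\{[\IQ[G/G_j]] : j = 0, \ldots, r\}$ of $R_\IQ(G)$, using that $\chi_{[\IQ[G/G_j]]}(g) = p^{r-j}$ if $g \in G_j$ and $0$ otherwise (fixed-point counts on $G/G_j$). Writing $c_s$ for the character value on a generator of $G_s$, a class $\sum_j a_j [\IQ[G/G_j]]$ has character $c_s = \sum_{j \ge s} a_j p^{r-j}$, giving a triangular system in the $a_j$. For (i), the simple identification $[\IQ] + c [\IQ G]$ with $c = (2^{k(p-1)p^{r-1}} - 1)/p^r$ is forced by the two distinct character values. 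For (iii), substituting the proposed $a_j$ from the statement into $\sum_{j \ge s} a_j p^{r-j}$ telescopes to $p^{kp^{r-s}}$ for $s \ge 1$ and to $0$ for $s = 0$, matching the computed characters. The argument for (ii) proceeds analogously, where the characters at generators of $G_s$ are $2^{k 2^{r-s}}$ for $s \ne 1$ and $0$ for $s = 1$.

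The main obstacle is the bookkeeping for $p=2$ in part (ii): the vanishing of $\Phi_2(-1)$ forces the anomalous negative coefficient of $[\IQ[G/G_1]]$ and splits the formula into the cases $r = 1$ and $r \ge 2$, so the telescoping verification must handle the $s = 0$, $s = 1$, and $2 \le s \le r$ ranges separately. Once this is done, parts (i) and (iii) follow from the same character calculation with the uniform cyclotomic values $\Phi_{p^s}(-1) = 1$ (respectively $\Phi_{p^s}(1) = p$).
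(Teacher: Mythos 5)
Your proof is correct, but it takes a genuinely different route from the paper's. The paper deduces the lemma from the resolution $0 \to \bigoplus_{\sigma} \Gamma^j \IQ[G/G_1]_{\sigma} \to F_1 \to \cdots \to F_j \to \Lambda^j\IQ[\zeta] \to 0$ of Langer--L\"uck, which expresses $[\Lambda^{\all}\IQ[\zeta]]$ and $[\Lambda^{\alt}\IQ[\zeta]]$ modulo free modules in terms of the permutation modules on $\map(G/G_1,P)_{\ev}$ and $\map(G/G_1,P)_{\odd}$; it then invokes the Burnside-ring computation of Lemma~\ref{lem:map(Hk,P)/H)_parity} together with the exponential law for the $k$-th power, and pins down the coefficient of $[\IQ G]$ by a dimension count. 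You instead use the injectivity of $R_{\IQ}(G) \to R_{\IC}(G)$ and the classical identities $\chi_{[\Lambda^{\all}V]}(g) = \det(1+g)$, $\chi_{[\Lambda^{\alt}V]}(g) = \det(1-g)$, reducing everything to the cyclotomic evaluations $\Phi_{p^s}(1)=p$ and $(-1)^{\phi(p^s)}\Phi_{p^s}(-1) \in \{1,0,2\}$ and a telescoping check of fixed-point counts on $G/G_j$; I verified that your character values ($2^{k2^{r-s}}$ resp.\ $0$ resp.\ $1$ for $\Lambda^{\all}$, $p^{kp^{r-s}}$ for $\Lambda^{\alt}$ on a generator of $G_s$) do match the asserted right-hand sides in all cases, including the anomalous $p=2$, $s=1$ vanishing that produces the negative $[\IQ[G/G_1]]$ coefficient. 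Your argument is more elementary and self-contained; what the paper's approach buys is that the intermediate objects $\map(G/G_1,P)_{\ev/\odd}$ and Lemma~\ref{lem:map(Hk,P)/H)_parity} are reused elsewhere (notably in Theorem~\ref{the:prime_power_case}), so the two computations are carried out with the same bookkeeping.
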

\begin{proof}~\ref{lem:Q[zeta]-in_A(G)_p_group_G:all_p_not_2} and~\ref{lem:Q[zeta]-in_A(G)_p_group_G:all_p_is_2}
Consider an integer $j \ge 1$. We conclude from~\cite[(1.5) and Lemma~1.6]{Langer-Lueck(2011_homology)} 
(using the notation of this paper)  that there exists
a sequence of $\IQ G$-modules
\[
0 \to \bigoplus_{\sigma \in S_{<p}} \Gamma^j \IQ[G/G_1]_{\sigma} \to F_1 
\to F_2 \to \cdots \to F_j \to \Lambda^j \IQ[\zeta] \to 0
\]
where each $F_l$ is a finitely generated free $\IQ G$-module and $S_{< p}$ is
the subset of $S$ consisting of elements represented by functions $f \colon
\IZ/p^{r-1} \to \{0,1, \ldots , (p-1)\}$ and the $G$-action on $G/G_1$ comes
from the canonical projection $G \to G/G_1$. Hence we get for some $a_j \in\IZ$
in $R_{\IQ}(G)$
\[
\bigl[\Lambda^{j} \IQ[\zeta]\bigr]
= (-1)^j \cdot \left[\bigoplus_{\sigma \in S_{<p}}  \Gamma^j \IQ[G/G_1]_{\sigma}\right]
+ a_j \cdot [\IQ G].
\]
Thus we get for an appropriate integer $a'$ in $R_{\IQ}(G)$
\begin{eqnarray*}
\bigl[\Lambda^{\all} \IQ[\zeta]\bigr]
& = & 
\left[\bigoplus_{l \ge 0} \bigoplus_{\sigma \in S_{<p}}  \Gamma^{2l} \IQ[G/G_1]_{\sigma}\right] 
-
\left[\bigoplus_{l \ge 0} \bigoplus_{\sigma \in S_{<p}}  \Gamma^{2l+1} \IQ[G/G_1]_{\sigma}\right] + a' \cdot [\IQ G].
\end{eqnarray*}

Fix a generator $t \in G$. Denote by $[t^s]$ the class of $t^s$ in $G/G_1$.
A $\IZ$-basis for $\bigoplus_{\sigma \in S_{<p}} \Gamma^j \IQ[G/G_1]_{\sigma}$ is given by the set
\[
\left\{[t^0]^{[e_0]}[t^1]^{[e_1]}\ldots [t^{p^{r-1}-1}]^{[e_{p^{r-1}}]} \,\left|\,
e_0,e_1, \ldots, e_{p^{r-1}} \in P, \sum_{i=0}^{p^{r-1}-1} e_i = j\right.\right\}.
\]
Hence a $\IZ$-basis for 
$\bigoplus_{l \ge 0} \bigoplus_{\sigma \in S_{<p}} \Gamma^{2l} \IQ[G/G_1]_{\sigma}$ is given by the set
\[
\left\{[t^0]^{[e_0]}[t^1]^{[e_1]}\ldots [t^{p^{r-1}-1}]^{[e_{p^{r-1}}]} \,\left|\, 
e_0,e_1, \ldots, e_{p^{r-1}} \in P, \sum_{i=0}^{p^{r-1}-1} e_i \equiv 0 \mod 2\right.\right\}.
\]
We can identify this set with $\map(G/G_1,P)_{\ev}$ in the obvious way. There is
an obvious $G/G_1$-action on $\map(G/G_1,P)_{\ev}$ which yields a $G$-action by
the projection $G \to G/G_1$. Thus we get in $R_{\IQ}(G)$
\begin{eqnarray*}
\left[\bigoplus_{l \ge 0} \bigoplus_{\sigma \in S_{<p}} \Gamma^{2l} \IQ[G/G_1]_{\sigma}\right] 
& =  &
\bigl[\IQ \map(G/G_1,P)_{\ev} \bigr];
\\
\left[\bigoplus_{l \ge 0} \bigoplus_{\sigma \in S_{<p}} \Gamma^{2l+1} \IQ[G/G_1]_{\sigma} \right] 
& = &
\bigl[\IQ \map(G/G_1,P)_{\odd} \bigr].
\end{eqnarray*}
Hence we obtain the following equation in $R_{\IQ}(H)$ for some integer $a'$
\begin{eqnarray}
\quad\quad  \quad\bigl[\Lambda^{\all} \IQ[\zeta]\bigr]
& = & 
\bigl[\IQ \map(G/G_1,P)_{\ev} \bigr] 
- \bigl[\IQ \map(G/G_1,P)_{\odd} \bigr] +  a' \cdot [\IQ G].
\label{Lambda_and_Gamma}
\end{eqnarray}

By the exponential law we get an isomorphism of $\IQ G$-modules
\[
\Lambda^{\all} (\IQ[\zeta]^k) 
\cong \bigotimes_{l = 1} ^k \Lambda^{\all} \IQ[\zeta]
\]
and hence we get in $R_{\IQ}(G)$
\[
\bigl[\Lambda^{\all} (\IQ[\zeta]^k)\bigr]
= \prod_{l= 1}^k \bigl[\Lambda^{\all} \IQ[\zeta]\bigr].
\]

For every $\IQ G$-module $V$ the $\IQ G$-module $V \otimes_{\IQ} \IQ G$  is
a free $\IQ G$-module. Hence we obtain in $R_{\IQ}(G)$ 
for an appropriate integer $a$
\begin{multline}
\label{sum_allLambdaj}
\bigl[\Lambda^{\all} (\IQ[\zeta]^k)\bigr]
\\
 = 
\left(\bigl[\IQ \map(G/G_1,P)_{\ev} \bigr]  - \bigl[\IQ \map(G/G_1,P)_{\odd} \bigr]\right)^k +  a \cdot [\IQ G].
\end{multline}
Next we consider the case $p \not= 2$. Lemma~\ref{lem:map(Hk,P)/H)_parity}~\ref{lem:map(Hk,P)/H):ev-odd}   and~\eqref{sum_allLambdaj} imply
\begin{eqnarray*}
\bigl[\Lambda^{\all} (\IQ[\zeta]^k)\bigr]
& = & 
[\IQ] +  a \cdot [\IQ G] 
\end{eqnarray*}
Since
\begin{eqnarray*}
\sum_{j \ge 0}  \dim_{\IQ}\left(\Lambda^j \IQ[\zeta]^k \right) & = & 2^{\dim_{\IQ}(\IQ[\zeta]^k)} = 2^{k \cdot (p-1) \cdot p^{r-1}},
\\
\dim_{\IQ}\left(\IQ G \right) & = & p^r,
\end{eqnarray*}
we conclude
\[
a = 
\frac{2^{k \cdot (p-1) \cdot p^{r-1}}-1}{p^r}
\]
This finishes the proof of assertion~\ref{lem:Q[zeta]-in_A(G)_p_group_G:all_p_not_2}.

Next we treat assertion~\ref{lem:Q[zeta]-in_A(G)_p_group_G:all_p_is_2}, i.e., the case $p = 2$.
We begin with the case $r = 1$, Then $G/G_1$ is trivial and~\eqref{sum_allLambdaj} implies
\[
\bigl[\Lambda^{\all} (\IQ[\zeta]^k)\bigr] = a \cdot \IQ G.
\] 
Taking rational dimensions yields $2^k = 2a$ and hence $a  = 2^{k-1}$. Hence we get
\begin{eqnarray*}
\bigl[\Lambda^{\all} (\IQ[\zeta]^k)\bigr]
& = & 
2^{k-1} \cdot [\IQ G] \quad \text{if}\; r = 1.
\end{eqnarray*}
If $r \ge 2$, then $G/G_1$ is non-trivial and hence 
Lemma~\ref{lem:map(Hk,P)/H)_parity}~\ref{lem:map(Hk,P)/H):ev-odd}  and~\eqref{sum_allLambdaj} imply for some integer $a$
\begin{eqnarray}
\label{Lambda_all_in_A(G)}
& & 
\\
\bigl[\Lambda^{\all} (\IQ[\zeta]^k)\bigr]
& = & 
2^k \cdot [\IQ ]   - 2^{k2^{r-2} -r +1} \cdot \bigl[\IQ[G/G_1]\bigr]
\nonumber
\\ & & 
\quad +  \sum_{i=1}^{r-2} \bigl(2^{k2^{r -1 - i}-r + 1+i} - 2^{k2^{r-2-i}-r+1 +i}\bigr) \cdot \bigl[\IQ[G/G_{i+1}]\bigr]  + a \cdot  [\IQ G]
\nonumber
\\
& = & 
2^k \cdot [\IQ ]   - 2^{k2^{r-2} -r +1} \cdot \bigl[\IQ[G/G_1]\bigr]
\nonumber
\\ & & 
\quad +  \sum_{i=2}^{r-1} \bigl(2^{k2^{r - i}-r +i} - 2^{k2^{r-1-i}-r+i}\bigr) \cdot \bigl[\IQ[G/G_{i}]\bigr]  + a \cdot  [\IQ G].
\nonumber
\end{eqnarray}
If $ r = 2$, then taking rational dimensions in~\eqref{Lambda_all_in_A(G)} yields
\[
2^{2k} = 2^k - 2^{k-1} \cdot 2 + 2^2 \cdot a
\]
and hence $a  = 2^{2k-2}$.  If $r \ge 3$, taking rational dimension in~\eqref{Lambda_all_in_A(G)} yields
\begin{eqnarray*}
2^{k \cdot 2^{r-1}} 
& = & 
2^k -  2^{k2^{r-2} -r +1} \cdot 2^{r-1} 
+  \sum_{i=2}^{r-1} \bigl(2^{k2^{r - i}-r +i} - 2^{k2^{r-1-i}-r+i}\bigr) \cdot 2^{r-i}  + a \cdot  2^r
\\
& = & 
2^k -  2^{k2^{r-2}} 
+  \sum_{i=2}^{r-1} \bigl(2^{k2^{r - i}} - 2^{k2^{r-1-i}}\bigr)   + a \cdot  2^r
\\
& = & 
2^k -  2^{k2^{r-2}} +  a \cdot  2^r
+  \sum_{i=2}^{r-1} 2^{k2^{r - i}} -   \sum_{i=2}^{r-1}  2^{k2^{r-1-i}}
\\
& = & 
2^k -  2^{k2^{r-2}} +  a \cdot  2^r
+  \sum_{i=2}^{r-1} 2^{k2^{r - i}} -   \sum_{i=3}^{r}  2^{k2^{r-i}}
\\
& = & 
2^k -  2^{k2^{r-2}} +  a \cdot  2^r + 2^{k2^{r - 2}} -  2^{k}
+  \sum_{i=3}^{r-1} 2^{k2^{r - i}} -   \sum_{i=3}^{r-1}  2^{k2^{r-i}}
\\
& = & 
 a \cdot  2^r
\end{eqnarray*}
and hence
\[
a = 2^{k \cdot 2^{r-1}-r} \quad \text{if}\; r \ge 2.
\]
This together with~\eqref{Lambda_all_in_A(G)} implies for $r \ge 2$
\begin{multline*}
\bigl[\Lambda^{\all} (\IQ[\zeta]^k)\bigr]
 =
2^k \cdot [\IQ ]   - 2^{k2^{r-2} -r +1} \cdot \bigl[\IQ[G/G_1]\bigr]
\\
+  \sum_{i=2}^{r-1} \bigl(2^{k2^{r - i}-r +i} - 2^{k2^{r-1-i}-r+i}\bigr) \cdot \bigl[\IQ[G/G_{i}]\bigr]  + 2^{k \cdot 2^{r-1}-r} \cdot  [\IQ G].
\end{multline*}
This finishes the proof of assertion~\ref{lem:Q[zeta]-in_A(G)_p_group_G:all_p_is_2}.
\\[1mm]~\ref{lem:Q[zeta]-in_A(G)_p_group_G:alternating}
One shows analogously to~\eqref{Lambda_and_Gamma} for some integer $b'$
\begin{eqnarray*}
\bigl[\Lambda^{\alt} \IQ[\zeta]\bigr]
& = & 
\bigl[\IQ \map(G/G_1,P)_{\ev} \bigr] 
+ \bigl[\IQ \map(G/G_1,P)_{\odd} \bigr] +  b' \cdot [\IQ G].
\\
& = & 
\bigl[\IQ \map(G/G_1,P) \bigr] +  b' \cdot [\IQ G].
\end{eqnarray*}
Again by the exponential law, we get
\begin{eqnarray*}
\bigl[\Lambda^{\alt} (\IQ[\zeta]^k)\bigr]
& = &
\prod_{l = 1}^k\, \bigl[\Lambda^{\alt} \IQ[\zeta]\bigr].
\end{eqnarray*}
Hence we get for  some appropriate integer $b \in \IZ$ in $R_{\IQ}(G)$.
\begin{eqnarray*}
\bigl[\Lambda^{\alt} (\IQ[\zeta]^k)\bigr]
& = &
\bigl[\IQ \map(G/G_1,P)^k \bigr] +  b \cdot [\IQ G].
\end{eqnarray*}
This implies by Lemma~\ref{lem:map(Hk,P)/H)_parity}~\ref{lem:map(Hk,P)/H):all} 
 applied to $H = G/G_1$
\begin{eqnarray*}
\bigl[\Lambda^{\alt} (\IQ[\zeta]^k)\bigr]
& = &
p^k \cdot [\IQ] + \sum_{i=0}^{r-2} \bigl(p^{kp^{r -i-1}-r+i+1} - p^{kp^{r-i-2}-r + i +1}\bigr)\cdot \bigl[\IQ[G/G_{i+1}]\bigr] +  b \cdot [\IQ G]
\\
& = &
p^k \cdot [\IQ] + \sum_{i=1}^{r-1} \bigl(p^{kp^{r -i}-r+i} - p^{kp^{r-i-1}-r + i }\bigr)\cdot \bigl[\IQ[G/G_{i}]\bigr] +  b \cdot [\IQ G].
\end{eqnarray*}
Since
\begin{eqnarray*}
\sum_{j \ge 0}  (-1)^j \cdot \dim_{\IQ}\left(\Lambda^{j} (\IQ[\zeta]^k\right) & = & 0,
\end{eqnarray*}
we get
\[
b = - p^{k-r} - \sum_{i=1}^{r-1} \big(p^{kp^{r - i}-r} - p^{kp^{r-i-1}-r}\bigr) = -p^{kp^{r -1}-r}.
\]
This finishes the proof of Lemma~\ref{lem:Q[zeta]-in_A(G)_p_group_G}.
\end{proof}

\begin{lemma}
\label{lem:sum_rk_Z(Lambda_j_L_G)}
Suppose that $m = p^r$ for a prime $p$ and an integer $r \ge 1$. Let $k$ be a natural number
uniquely determined by $n = k \cdot (p-1) \cdot p^{r-1}$.
Then
\[
\sum_{l \ge 0} \, \rk_{\IZ}(\Lambda^l L)^G
= 
\begin{cases}
1 +  \frac{2^{n}-1}{p^r} & \text{if}\; p \not= 2;
\\
2^{k-1} & \text{if}\; p = 2, r = 1;
\\
 2^{2k -2} + 2^{k-1}  & \text{if}\; p = 2, r = 2;
\\
2^{k-1}   +  2^{k2^{r-2} -r+1} + 2^{k \cdot 2^{r-1}-r}  +  \sum_{i=3}^{r-1}  2^{k2^{r-i}-r+i-1}
& \text{if}\; p = 2, r \ge 3.
\end{cases}
\]
\end{lemma}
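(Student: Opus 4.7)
The plan is to reduce the computation to a $\IQ G$-module calculation and then invoke Lemma~\ref{lem:Q[zeta]-in_A(G)_p_group_G}.

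First I would observe that $(\Lambda^l L)^G$ is a subgroup of the free abelian group $\Lambda^l L$, hence itself finitely generated free, and its rank equals $\dim_{\IQ}\bigl((\Lambda^l L \otimes_{\IZ} \IQ)^G\bigr) = \dim_{\IQ}\bigl((\Lambda^l (L \otimes_{\IZ} \IQ))^G\bigr)$. Since the conjugation action of $G = \IZ/p^r$ on $L = \IZ^n$ is free outside the origin, there is a $\IQ G$-isomorphism $L \otimes_{\IZ} \IQ \cong \IQ[\zeta]^k$, where $k$ is determined by $n = k(p-1)p^{r-1}$. Summing over $l \geq 0$ gives
\[
\sum_{l \ge 0} \rk_{\IZ}\bigl((\Lambda^l L)^G\bigr) = \dim_{\IQ}\bigl((\Lambda^{\all}(\IQ[\zeta]^k))^G\bigr).
\]

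Next I would use the fact that $V \mapsto \dim_{\IQ}(V^G)$ is an additive homomorphism $R_{\IQ}(G) \to \IZ$, and that for every subgroup $H \subseteq G$ one has $\dim_{\IQ}\bigl((\IQ[G/H])^G\bigr) = 1$ (the fixed vectors are spanned by the norm element $\sum_{gH \in G/H} gH$). Consequently, if in $R_{\IQ}(G)$ we can write $[\Lambda^{\all}(\IQ[\zeta]^k)] = \sum_i c_i [\IQ[G/H_i]]$ for some integers $c_i$, then the desired rank is simply $\sum_i c_i$.

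The explicit decompositions of $[\Lambda^{\all}(\IQ[\zeta]^k)]$ in $R_{\IQ}(G)$ are precisely the content of Lemma~\ref{lem:Q[zeta]-in_A(G)_p_group_G}~\ref{lem:Q[zeta]-in_A(G)_p_group_G:all_p_not_2} and~\ref{lem:Q[zeta]-in_A(G)_p_group_G:all_p_is_2}, which I would now plug in. For $p$ odd this gives immediately $1 + (2^n-1)/p^r$. For $p = 2$, $r = 1$ it gives $2^{k-1}$, and for $p = 2$, $r = 2$ it gives $2^k - 2^{k-1} + 2^{2k-2} = 2^{k-1} + 2^{2k-2}$.

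The only non-routine step is the case $p = 2$, $r \ge 3$: summing the coefficients in the formula from Lemma~\ref{lem:Q[zeta]-in_A(G)_p_group_G}~\ref{lem:Q[zeta]-in_A(G)_p_group_G:all_p_is_2} yields
\[
2^k - 2^{k2^{r-2}-r+1} + \sum_{i=2}^{r-1}\bigl(2^{k2^{r-i}-r+i} - 2^{k2^{r-1-i}-r+i}\bigr) + 2^{k2^{r-1}-r},
\]
and a telescoping re-indexing (shift $j = i+1$ in the second internal sum) is needed to bring this into the stated form $2^{k-1} + 2^{k2^{r-2}-r+1} + 2^{k2^{r-1}-r} + \sum_{i=3}^{r-1} 2^{k2^{r-i}-r+i-1}$. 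The main obstacle is purely bookkeeping: carefully matching the two families of exponents so that all cancellations and sign flips come out right, in particular checking that the boundary terms at $i=2$ and $i=r-1$ produce the isolated summands $2^{k-1}$ and $2^{k2^{r-2}-r+1}$ (with the right signs) rather than being absorbed into the residual telescoping sum. Once that is verified term by term, the lemma follows.
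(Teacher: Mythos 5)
Your proposal is correct and follows essentially the same route as the paper's proof: reduce to $\dim_{\IQ}\bigl((\Lambda^{\all}(\IQ[\zeta]^k))^G\bigr)$, plug in Lemma~\ref{lem:Q[zeta]-in_A(G)_p_group_G}, note that each permutation module $[\IQ[G/H]]$ contributes $1$ to the fixed-point dimension, and telescope the resulting sum for $p=2$, $r\ge 3$. The only difference is cosmetic: you make explicit the additivity of $V \mapsto \dim_{\IQ}(V^G)$ on $R_{\IQ}(G)$, which the paper leaves implicit.
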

\begin{proof}
Notice that $L \otimes_{\IZ}\IQ$ is isomorphic as $\IQ G$-module to $\IQ[\zeta]^k$ for $\zeta = \exp(2\pi i/m)$. 
This implies
\begin{eqnarray*}
n & = & k \cdot (p-1) \cdot p^{r-1};
\\
\sum_{l \ge 0} \, \rk_{\IZ}(\Lambda^l L)^G
& = & 
\dim_{\IQ}\left(\bigl(\Lambda^{\all} (\IQ[\zeta]^k)\bigr)^G\right).
\end{eqnarray*}
Now the  case $p \not= 2$ follows from Lemma~\ref{lem:Q[zeta]-in_A(G)_p_group_G}~\ref{lem:Q[zeta]-in_A(G)_p_group_G:all_p_not_2}.
Next we treat the case $p = 2$. Then the claim follows for $r =1$ directly and for
$r = 2$ and for $r \ge 3$ by the following calculations from 
Lemma~\ref{lem:Q[zeta]-in_A(G)_p_group_G}~\ref{lem:Q[zeta]-in_A(G)_p_group_G:all_p_is_2}.
Namely,  if $r = 2$, we get
\begin{eqnarray*}
\sum_{l \ge 0} \rk_{\IZ}(\Lambda^l L)^G)
& = & 
2^k    - 2^{k2^{2-2} -2 +1} + 2^{k \cdot 2^{2-1}-2}
\\
& = &
2^k    - 2^{k-1} + 2^{2k -2}
\\
& = &  2^{2k -2} + 2^{k-1}.
\end{eqnarray*} 
Suppose $r \ge 3$. Then 
\begin{eqnarray*}
\sum_{l \ge 0} \rk_{\IZ}(\Lambda^l L)^G)
& = & 
2^k    - 2^{k2^{r-2} -r +1} + 2^{k \cdot 2^{r-1}-r} 
+  \sum_{i=2}^{r-1} \bigl(2^{k2^{r - i}-r +i} - 2^{k2^{r-1-i}-r+i}\bigr)
\\
& = & 
2^k    - 2^{k2^{r-2} -r +1} + 2^{k \cdot 2^{r-1}-r} 
+  \sum_{i=2}^{r-1} 2^{k2^{r - i}-r +i} - \sum_{i=2}^{r-1}  2^{k2^{r-1-i}-r+i}
\\
& = & 
2^k    - 2^{k2^{r-2} -r +1} + 2^{k \cdot 2^{r-1}-r} 
+  \sum_{i=2}^{r-1} 2^{k2^{r - i}-r +i} - \sum_{i=3}^{r}  2^{k2^{r-i}-r+i-1}
\\
& = & 
2^k    - 2^{k2^{r-2} -r +1} + 2^{k \cdot 2^{r-1}-r} + 2^{k2^{r - 2}-r +2} - 2^{k-1}
\\
& & \quad +  \sum_{i=3}^{r-1} 2^{k2^{r - i}-r +i} - \sum_{i=3}^{r-1}  2^{k2^{r-i}-r+i-1}
\\
& = & 
2^{k-1}   +  2^{k2^{r-2} -r+1} + 2^{k \cdot 2^{r-1}-r}  +  \sum_{i=3}^{r-1}  2^{k2^{r-i}-r+i-1}.
\end{eqnarray*}
\end{proof}


\typeout{-------------------------------- Section 7: The general  case  --------------------------}

\section{The general case}
\label{sec:The_general_product_case}

In this section we consider the general case, where $m$ is not necessarily a prime power.

\begin{notation}\label{not:tensor_product_case}
We write
\[
m = |G| = p_1 ^{r_1} \cdot p_2 ^{r_2} \cdot \cdots \cdot p_s ^{r_s},
\]
for distinct primes $p_1, p_2, \ldots , p_s$ and integers $s, r_1, r_2, \ldots ,r_s$, all of which are greater or equal to $1$.
If $C \subseteq G$ is a subgroup, denote by $C[i]$ the $p_i$-Sylow subgroup.

Given  a subgroup $H \subseteq G[j]$,  we denote in the sequel  by $H'$ the subgroup of
$G = G[1] \times \cdots \times G[j-1] \times G[j] \times G[j+1] \times \cdots \times G[s]$
given by 
\[
H' := \{1\} \times \cdots \times \{1\} \times H \times \{1\} \cdots \times \{1\},
\]
and analogously  we define $H' \subseteq C$ for a subgroup $H$ of $C[i]$. (Obviously $H\cong H'$.) 
\end{notation}

\begin{lemma}\label{lem:H(C;L)}
  Let $C \subseteq G$ and $H \subseteq G$ be subgroups. Then  we have 
  \[
    H^1(C; L )^H \cong \{0\}
    \] 
   unless there exists $i \in \{1,2, \ldots , s\}$ such that
    $C[j] = H[j] = \{0\}$ holds for all $j \in \{1,2, \ldots, s\}$ with $j \not= i$.
\end{lemma}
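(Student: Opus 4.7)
The plan is to prove the contrapositive: assume that no index $i$ satisfies $C[j] = H[j] = \{0\}$ for all $j \neq i$, and deduce $H^1(C;L)^H = \{0\}$. The case $C = \{1\}$ is trivial, so I may assume $C \neq \{1\}$. The key technical observation used throughout is the following: for \emph{any} nontrivial subgroup $K \subseteq G$ the norm $N_K \colon L \to L$ is the zero map, since $N_K(\ell) \in L^K = \{0\}$ because $G$ acts freely on $L \setminus \{0\}$.

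First I would treat the case where $C$ itself has nontrivial Sylow subgroups at two distinct primes. Then $C = C[i] \times C''$ with $C'' := \prod_{l \neq i} C[l] \neq \{1\}$ and $\gcd(|C[i]|,|C''|) = 1$. I would use the Lyndon--Hochschild--Serre spectral sequence
\[
E_2^{p,q} = H^p(C''; H^q(C[i]; L)) \Longrightarrow H^{p+q}(C; L).
\]
The row $q = 0$ vanishes because $L^{C[i]} = \{0\}$. For $q \geq 1$ the $C''$-module $H^q(C[i]; L)$ is a $p_i$-group on which $|C''|$ is invertible, so the averaging idempotent $|C''|^{-1} N_{C''}$ projects onto the $C''$-invariants; since $N_{C''}$ is zero on $L$ it is zero on $H^q(C[i]; L)$, forcing $E_2^{0,q} = 0$. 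For $p,q \geq 1$ the entries are simultaneously $|C''|$- and $|C[i]|$-torsion, hence zero by coprimality. Thus already $H^1(C;L) = 0$, so certainly $H^1(C;L)^H = 0$.

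Second, suppose $C$ has support at a single prime, i.e.\ $C \subseteq G[i]$. The failure of the hypothesis then forces $H[j] \neq \{0\}$ for some $j \neq i$. Let $s$ generate the cyclic subgroup $H[j]$, of order a power of $p_j \neq p_i$. Since $N_{\langle s \rangle}$ is zero on $L$, it is also zero on the subquotient $H^1(C;L)$. Because $C$ is a cyclic $p_i$-group, $H^1(C;L)$ is annihilated by $|C|$ and is therefore a $p_i$-group, on which $|\langle s \rangle|$ is a unit. The averaging identity $M^{\langle s \rangle} = |\langle s \rangle|^{-1} N_{\langle s \rangle} M$ then yields $H^1(C;L)^{\langle s \rangle} = \{0\}$, and hence $H^1(C;L)^H \subseteq H^1(C;L)^{H[j]} = H^1(C;L)^{\langle s \rangle} = \{0\}$.

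I do not foresee a serious obstacle; once the zero-norm observation $N_K|_L = 0$ is noted, the rest is formal. The mild point deserving care is the averaging identity, which is enabled by coprimality of $|C''|$ (respectively $|\langle s \rangle|$) with the $p_i$-power order annihilating the ambient group $H^q(C[i]; L)$ (respectively $H^1(C;L)$).
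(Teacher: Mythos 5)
Your proof is correct. It does, however, take a genuinely different route from the paper's. The paper argues forward from a hypothetical nonzero element: it localizes $H^1(C;L)$ at a prime $p_i$ where the group is nonzero, identifies $H^1(C;L)_{(p_i)}$ with $H^1(C[i];L)^{C/C[i]}$ via restriction to the Sylow subgroup (Brown, III.10.3), and then uses the isomorphism $H^1(C[i];L)^{K}\cong H^1\bigl(C[i];L^{K}\bigr)$ for $K$ of order prime to $p_i$ to force $L^{C[j]'}\neq\{0\}$ and $L^{H[j]'}\neq\{0\}$, contradicting freeness of the action. You instead prove the contrapositive with a case split on whether $C$ is a $p$-group, and your engine is the observation that $N_K\colon L\to L$ vanishes for every nontrivial $K$ (since $N_K(\ell)\in L^K=\{0\}$), combined with the averaging idempotent on modules of coprime torsion; in the non-$p$-group case you get the stronger conclusion $H^1(C;L)=0$ outright from the Lyndon--Hochschild--Serre spectral sequence, and in the $p$-group case you kill $H^1(C;L)^{H[j]'}$ directly. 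Both arguments ultimately rest on the same two facts --- $L^K=\{0\}$ for nontrivial $K$ and coprimality of Sylow orders --- but yours is more self-contained (no appeal to the Sylow-restriction theorem, at the cost of invoking the spectral sequence), and it isolates the zero-norm trick that the paper also exploits elsewhere (in the proof of Lemma~\ref{lem:bijection_cohomology_calc}). The case analysis is exhaustive: if the exceptional condition fails and $C$ is supported at a single prime $p_i$, then indeed some $H[j]\neq\{0\}$ with $j\neq i$ is forced, which is exactly what your second case needs.
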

\begin{proof}
Suppose that $H^1(C; L )^H \not= \{0\}$. Since $H^1(C; L )$ is annihilated by multiplication with
$m$ (see~\cite[Corollary~10.2 in~III.10 on page~84]{Brown(1982)}), there exists
$i \in \{1,2 ,\ldots ,s\}$ with
\[
\left(H^1(C; L )^H\right)_{(p_i)} \not = 0.
\]
We conclude for all $j \in \{1,2 \ldots, s\}$ because of $H[j]' \subseteq H$
\[
\left(H^1(C; L )^{H[j]'}\right)_{(p_i)} \not = 0.
\]
Assume in the sequel $j \not= i$. Then $p_i$ and $|H[j]'|$ are prime to one another and 
we conclude using~\cite[Theorem~10.3 in~III.10 on page~84]{Brown(1982)}
\begin{eqnarray*}
\left(H^1(C; L )^{H[j]'}\right)_{(p_i)} 
& \cong &
\left(H^1(C; L )_{(p_i)}\right)^{H[j]'}
\\
& \cong&
\left(H^1(C[i];L )^{C/C[i]}\right)^{H[j]'}.
\end{eqnarray*}
In particular 
\[
\left(H^1(C[i]; L )^{C/C[i]}\right)^{H[j]'} \not= \{0\}.
\]
This implies $H^1(C[i]; L )^{C[j]'} \not= 0$ and $H^1(C[i]; L)^{H[j]'}  \not= 0$. 
Since $|C[j]'|$ and $|C[i]|$ are prime to one another, we get
\[
H^1(C[i];L )^{C[j]'}  \cong H^1\bigl(C[i];L^{C[j]'}\bigr).
\]
This implies $H^1\bigl(C[i];L^{C[j]'}\bigr) \not= \{0\}$ and hence $L^{C[j]'} \not= \{0\}$.
Since the $G$-action on $L$ is free outside the origin, we conclude  $C[j] = 0$. 

Since $|H[j]|$ and $|C[i]|$ are prime to one another, we have
\[
H^1(C[i];L )^{H[j]'} \cong
H^1\bigl(C[i];L^{H[j]'}\bigr). 
\]
This implies $H^1\bigl(C[i];L^{H[j]'}\bigr) \not= \{0\}$ and hence $L^{H[j]'} \not= 0$.
Since the $G$-action on $L$ is free outside the origin, we conclude  $H[j] = 0$. 
\end{proof}

Let $C \subseteq G$ be a subgroup.  Let 
\[\bigl[H^1(C[i];L)\bigr] \in A(G[i])
\] 
be the class of the finite $G[i]$-set $H^1(C[i];L)$, and let
\[
\bigl[H^1(C;L)\bigr] \in A(G)
\]
 be the class of the finite $G$-set $H^1(C; L)$,
where the actions come from the $G$-action on $L$.
Denote by
\[
\ind_{G[i]}^G \colon A(G[i]) \to A(G)
\] 
 the homomorphism of abelian groups coming from induction with the inclusion of groups $G[i] \to G$.

\begin{lemma} \label{lem:H(C;L)_in_A(G)_prep}
Suppose that there exists $i \in \{1,2, \ldots ,s\}$
such that $C[j] = \{0\}$ holds for  all $j \in \{1,2, \ldots ,s\}$ with $j \not= i$.
Then we get in $A(G)$
\[
\ind_{G[i]}^G\left(\bigl[H^1(C[i];L)\bigr] - \bigl[G[i]/G[i]\bigr]\right) 
=  \frac{m}{p_i^{r_i}} \cdot \left(\bigl[H^1(C;L)\bigr] -[G/G]\right).
\]
\end{lemma}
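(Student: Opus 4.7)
The plan is to compare both sides in $A(G)$ via the characters $\chi_H \colon A(G) \to \IZ$, $[X] \mapsto |X^H|$, one for each subgroup $H \subseteq G$; collectively these separate elements of $A(G)$. Set $P = G[i]$ and $Q = \prod_{j \ne i} G[j]$, so that $G = P \times Q$ and $[G:P] = |Q| = m/p_i^{r_i}$. Our hypothesis on $C$ forces $C = C[i]'$ (with $C[i] \subseteq P$); in the degenerate case $C[i] = \{1\}$ both sides are manifestly zero, so we may assume $C[i] \ne \{1\}$.

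Since $G$ is abelian, induction in the Burnside ring satisfies, for any $P$-set $X$ and any $H \subseteq G$,
\[
\chi_H\bigl(\ind_{P}^{G}[X]\bigr) =
\begin{cases} |Q|\cdot |X^H| & \text{if } H \subseteq P,\\ 0 & \text{if } H \not\subseteq P. \end{cases}
\]
Thus the character of the left hand side at $H$ equals $|Q|\bigl(|H^1(C[i];L)^H| - 1\bigr)$ when $H \subseteq P$ and $0$ otherwise, while the character of the right hand side at $H$ equals $|Q|\bigl(|H^1(C;L)^H| - 1\bigr)$. It therefore remains to verify (a) that $|H^1(C[i];L)^H| = |H^1(C;L)^H|$ for $H \subseteq P$, and (b) that $|H^1(C;L)^H| = 1$ for $H \not\subseteq P$.

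Claim (a) is a matter of identifications: under the isomorphism $C[i] \xrightarrow{\cong} C[i]' = C$, the $\IZ C[i]$-module $L$ (via the $P$-action) agrees with the $\IZ C$-module $L$ (via the restricted $G$-action), and for $H \subseteq P$ the induced $H$-action on $H^1$ on either side is given by the (trivial, as $G$ is abelian) conjugation action of $H$ together with the action of $H$ on the coefficient module $L$; these coincide. For (b), pick $j \ne i$ with $H[j] \ne \{1\}$; then $H[j]' \subseteq H$ and so $H^1(C;L)^H \subseteq H^1(C;L)^{H[j]'}$. Applying Lemma~\ref{lem:H(C;L)} to the subgroup $H[j]' \subseteq G$ would require a single index $i'$ with $C[j''] = (H[j]')[j''] = \{1\}$ for all $j'' \ne i'$; but the nontriviality of $(H[j]')[j]$ forces $i' = j$, while $C[i] \ne \{1\}$ forces $i' = i$, contradicting $j \ne i$. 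Hence $H^1(C;L)^{H[j]'} = \{0\}$, so only the zero class is $H$-fixed and (b) follows. The main subtlety is the verification of (a), which requires a careful identification of the two $H$-module structures coming from $P$ and from $G$; once this bookkeeping is set up, the rest is formal via the Burnside-ring character criterion.
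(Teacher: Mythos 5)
Your proposal is correct and follows essentially the same route as the paper: both arguments evaluate the marks $\chi_H$ (character maps) of the Burnside ring on both sides, use that $\ind_{G[i]}^G$ of a $G[i]$-set is that set times $\prod_{j\ne i}G[j]$ to get the factor $m/p_i^{r_i}$ together with the vanishing for $H \not\subseteq G[i]$, and invoke Lemma~\ref{lem:H(C;L)} to see that $H^1(C;L)^H$ is trivial in the second case. Your treatment is if anything slightly more careful than the paper's, which handles your step (a) by simply writing $C=C[i]$ and $H=H[i]$ and does not isolate the degenerate case $C=\{1\}$.
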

\begin{proof}
We have the isomorphism of $G$-sets
\begin{multline*}
\ind_{G[i]}^G\left(H^1(C[i];L)\right)
\\
\cong  
G[1] \times \cdots \times G[i-1] \times H^1(C[i];L) \times G[i+1] \times \cdots \times G[r].
\end{multline*}
Consider a subgroup $H \subseteq G$. If $H[j] = \{0\}$ for each  $j \in \{1, \ldots ,s\}$ with $j \not= i$, then 
we get for every finite $G[i]$-set $S$
\begin{eqnarray*}
\ch^G_H\bigl(\ind_{G[i]}^G([S])\bigr)
& = &
\left|(\ind_{G[i]}^GS)^H\right| 
\\
& =  &
\left|G[1] \times \cdots \times G[i-1] \times S^{H[i]} \times G[i+1] \times \cdots \times G[r]\right|
\\
 & = & 
|G[1]| \cdot \cdots \cdot |G[i-1]| \cdot \bigl|S^{H[i]}\bigr| \cdot |G[i+1]| \cdot \cdots \cdot |G[r]|
\\
& = &
\frac{m}{p_i^{r_i}} \cdot \bigl|S^{H[i]}\bigr|,
\end{eqnarray*}
and hence because of $C = C[i]$ and $H = H[i]$
\begin{eqnarray}
\label{ch_case_A}
\lefteqn{\ch^G_H\left(\ind_{G[i]}^G\bigl(\bigl[H^1(C[i];L)\bigr]-\bigl[G[i]/G[i]\bigr]\bigr)\right)}
& &
\\
&  = &
\frac{m}{p_i^{r_i}} \cdot \left(\left|H^1(C[i];L)^{H[i]}\right|- \left|G[i]/G[i]^{H[i]}\right|\right)
\nonumber
\\
& = & 
\frac{m}{p_i^{r_i}} \cdot \left(\left|H^1(C[i];L)^{H[i]}\right|- 1\right)
\nonumber
\\
& = & \frac{m}{p_i^{r_i}} \cdot \left(\left|H^1(C;L)^H\right|- \left|G/G^H\right|\right)
\nonumber
\\
& = & 
\frac{m}{p_i^{r_i}} \cdot\ch^G_H \left(\bigl[H^1(C;L)\bigr] - \bigl[G/G\bigr] \right).
\nonumber
\end{eqnarray}

Suppose that there exists $j \in \{1, \ldots ,s\}$ with $j \not= i$ and $H[j] \not= \{0\}$.
Then we get for every finite $G[i]$-set $S$
\begin{eqnarray*}
\lefteqn{\ch^G_H\bigl(\ind_{G[i]}^G([S])\bigr)}
& & 
\\
& = &
\left|(\ind_{G[i]}^G S)^H\right| 
\\
& =  &
\left|G[1]^{H[1]} \times \cdots \times G[i-1]^{H[i-1]} \times S^{H[i]} \times G[i+1]^{H[i+1]} \times \cdots \times G[r]^{H[r]}\right|
\\
 & = & 
|\emptyset|
\\
& = &
0,
\end{eqnarray*}
and we compute using Lemma~\ref{lem:H(C;L)}
\begin{eqnarray}
\label{ch_case_B}
\lefteqn{\ch^G_H\left(\ind_{G[i]}^G\bigl(\bigl[H^1(C[i];L)\bigr]-\bigl[G[i]/G[i]\bigr]\bigr)\right)}
& &
\\
&  = &
0
\nonumber
\\
& = & 
\frac{m}{p_i^{r_i}} \cdot \left(\bigl|H^1(C;L)^H\bigr| - 1\right).
\nonumber
\\
& = & 
\frac{m}{p_i^{r_i}} \cdot \ch^G_H\bigl(\bigl[H^1(C;L)\bigr]- \bigl[G/G\bigr]\bigl).
\nonumber
\end{eqnarray}
Since the character map 
\[\ch^G \colon A(G) \to \prod_{H \subseteq G} \IZ, \quad [S] \mapsto |S^H|
\]
is injective, Lemma~\ref{lem:H(C;L)_in_A(G)_prep}  follows from~\eqref{ch_case_A} and~\eqref{ch_case_B}.
\end{proof}

\begin{theorem}[General case]\label{the:H(C;L)_in_A(G)}\
Then we get using Notation~\ref{not:tensor_product_case}:
\begin{enumerate}

\item \label{the:H(C;L)_in_A(G):k}
There exists an natural number $k$ uniquely determined by the property
\[
n = k \cdot \prod_{i=1}^s (p_i-1) \cdot p_i^{r_i-1};
\]

\item \label{the:H(C;L)_in_A(G):p-power} Suppose that there exists 
  $i \in \{1,2,  \ldots ,s\}$ such that $C[j] = \{0\}$ holds for all $j \in \{1,2, \ldots ,s\}$
  with $j \not= i$. Define the natural numbers $r_i$ and $c_i$ by 
 $|G[i]| =   p_i^{r_i}$ and $|C[i]| = p_i^{c_i}$. Let $k[i]$ be the integer uniquely
  determined by $n = (p_i -1) \cdot p_i^{r_i-1} \cdot k[i]$.  (Its existence
  follows from assertion~\ref{the:H(C;L)_in_A(G):k}.) Let $G[i]_l$ be the
  subgroup of $G[i]$ of order $p_i^l$.

Then we get in $A(G)$
\begin{multline*}
\bigl[H^1(C;L)\bigr] =  
[G/G]  +  \frac{\bigl(p_i^{k[i]} -1\bigr) \cdot p_i^{r_i}}{m} \cdot \bigl[G/G[i]'\bigr] 
\\ 
+  \sum_{l = 0}^{r_i-c_i-1} \frac{p_i^{k[i]p_i^{r_i-c_i-l}+c_i+l} - p_i^{k[i]p_i^{r_i-c_i-l-1}+c_i+l}}{m} \cdot \bigl[G/G[i]_{c_i + l}'\bigr];
\end{multline*}

\item \label{the:H(C;L)_in_A(G):not_p-power}
Suppose that there exists no $i \in \{1,2, \ldots ,s\}$
such that $C[j] = \{0\}$ holds for  all $j \in \{1,2, \ldots ,s\}$ with $j \not= i$. Then
we get in $A(G)$.
\[
\bigl[H^1(C;L)\bigr] = [G/G].
\]
\item \label{the:H(C;L)_in_A(G):maximal_subgroups}\
\begin{enumerate}

\item \label{the:H(C;L)_in_A(G):maximal_subgroups:G}
If $C = G$, we have
\[
|\calm(C)| = |\calc(C)| =\begin{cases}
p^k & s = 1;
\\
1 & s \ge 2;
\end{cases}
\]

\item \label{the:H(C;L)_in_A(G):maximal_subgroups:non-p-power}
If $C$ is different from $G$ and is not a  $p$-group, then 
\begin{eqnarray*}
|\calc(C)| & = & 1;
\\
|\calm(C)| & = & 0;
\end{eqnarray*}

\item \label{the:H(C;L)_in_A(G):maximal_subgroups:p-power_and_not_G}
If $C$ is different from $G$ and $|C| = p_i^j$ for some $i \in \{1,2, \ldots, s\}$ and 
$j \in \{1,2, \ldots,  r_i\}$, then
\[
|\calm(C)|
= \begin{cases}
\frac{(p_i^{k[i]}-1) \cdot p_i^{r_i}}{m} 
& 
j = r_i;
\\
\frac{p_i^{k[i]p_i^{r_i-j}+j} - p_i^{k[i]p_i^{r_i-j-1}+j}}{m}
&
j \le r_i -1.
\end{cases}
\]
\end{enumerate}

\end{enumerate}
\end{theorem}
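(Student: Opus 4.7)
The plan is to derive all four assertions from the prime-power case (Theorem~\ref{the:prime_power_case}), Lemmas~\ref{lem:H(C;L)} and~\ref{lem:H(C;L)_in_A(G)_prep}, and the inclusion-exclusion formula of Theorem~\ref{the:calm(C)}. For~\ref{the:H(C;L)_in_A(G):k}, I will observe that $L\otimes_{\IZ}\IQ$ is a $\IQ G$-module with no nonzero fixed points under any nontrivial subgroup of $G$. Since the rational irreducible representations of the cyclic group $G$ are the fields $\IQ(\zeta_d)$ for $d\mid m$ and only $\IQ(\zeta_m)$ is faithful, this forces $L\otimes_{\IZ}\IQ\cong \IQ(\zeta_m)^k$ for a unique $k\ge 0$, and comparison of $\IQ$-dimensions gives $n = k\cdot\prod_i(p_i-1)p_i^{r_i-1}$.

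Next, for~\ref{the:H(C;L)_in_A(G):not_p-power}, Lemma~\ref{lem:H(C;L)} implies $H^1(C;L)^H = \{0\}$ for every $H\subseteq G$, so $|H^1(C;L)^H| = 1$ for all $H$, and injectivity of the character map $A(G)\hookrightarrow\prod_{H\subseteq G}\IZ$ forces $[H^1(C;L)] = [G/G]$. For~\ref{the:H(C;L)_in_A(G):p-power}, the idea is to restrict the $G$-action on $L$ to the Sylow subgroup $G[i]$: the action remains free outside the origin and $L\otimes\IQ$ becomes $\IQ(\zeta_{p_i^{r_i}})^{k[i]}$ where $k[i] = k\prod_{j\ne i}(p_j-1)p_j^{r_j-1}$. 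Theorem~\ref{the:prime_power_case}, applied to the $p_i$-group $G[i]$ with parameter $k[i]$, then provides a formula for $[H^1(C[i];L)]\in A(G[i])$. Applying the induction functor $\ind_{G[i]}^G$, which sends $[G[i]/G[i]_l]$ to $[G/G[i]_l']$, and rescaling via Lemma~\ref{lem:H(C;L)_in_A(G)_prep} by the factor $p_i^{r_i}/m$, transports the formula into $A(G)$ and yields the claimed expression.

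Part~\ref{the:H(C;L)_in_A(G):maximal_subgroups} will then follow by combining the above with Theorem~\ref{the:calm(C)}. For~(a), any $H\in\calc(G)$ satisfies $H\cap L = \{0\}$, so $|H| = m$ and $H$ is automatically maximal; hence $|\calm(G)| = |\calc(G)| = |H^1(G;L)|$, which equals $p^k$ by Theorem~\ref{the:prime_power_case} when $s=1$ and equals $1$ by~\ref{the:H(C;L)_in_A(G):not_p-power} when $s\ge 2$. For~(b), part~\ref{the:H(C;L)_in_A(G):not_p-power} yields $|\calc(C)| = 1$; moreover every proper overgroup $D_I$ of $C$ also fails to be a $p$-group, so every $|\calc(D_I)| = 1$ and the inclusion-exclusion in Theorem~\ref{the:calm(C)} collapses to $1 + \sum_{I\ne\emptyset}(-1)^{|I|} = 0$. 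For~(c), part~\ref{the:H(C;L)_in_A(G):p-power} gives an explicit expression for $|\calc(G[i]_j')|$ by summing orbit cardinalities, and the key observation for the inclusion-exclusion computing $|\calm(G[i]_j')|$ is that the only overgroup $D_I$ that is a $p$-group is $D_{\{i\}} = G[i]_{j+1}'$ (for $j<r_i$); all other $D_I$ contain two distinct prime divisors and contribute $|\calc(D_I)| = 1$. The binomial identity $\sum_{k=1}^{s-1}\binom{s-1}{k}(-1)^k = -1$ then cancels the non-$p$-group contributions against the leading $+1$, and the formula reduces to $|\calc(G[i]_j')| - |\calc(G[i]_{j+1}')|$, which telescopes to the single displayed term $(p_i^{k[i]p_i^{r_i-j}+j} - p_i^{k[i]p_i^{r_i-j-1}+j})/m$; the boundary case $j = r_i$ is handled analogously without the $D_{\{i\}}$ term.

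The main obstacle is the combinatorial bookkeeping in part~(c): one must correctly identify which of the $2^s-1$ overgroups $D_I$ are $p$-groups, verify that the non-$p$-group contributions collapse through the binomial identity, and check that the remaining simplification of $|\calc(G[i]_j')| - |\calc(G[i]_{j+1}')|$ produces precisely the single-term formula rather than a messier expression.
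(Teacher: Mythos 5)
Your proposal is correct and follows essentially the same route as the paper: part (i) via the classification of rational representations of a cyclic group, part (iii) via Lemma~\ref{lem:H(C;L)}, part (ii) by applying Theorem~\ref{the:prime_power_case} to the Sylow subgroup $G[i]$ and transporting the result to $A(G)$ through $\ind_{G[i]}^G$ and Lemma~\ref{lem:H(C;L)_in_A(G)_prep}, and part (iv) by the inclusion--exclusion of Theorem~\ref{the:calm(C)} together with the observation that only $D_{\{i\}}$ is a $p$-group. The only caveat is in (iv)(c): the alternating sum over the non-$p$-group overgroups $D_I$ (all $I\ne\emptyset,\{i\}$) equals $0$, which one sees by splitting according to whether $i\in I$, so be sure your binomial bookkeeping accounts for both halves; the resulting reduction to $|\calc(C)|-|\calc(C_{j+1})|$ is exactly what the paper obtains.
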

\begin{proof}~\ref{the:H(C;L)_in_A(G):k} The rational $G$-representation $L
  \otimes_{\IZ} \IQ$ has by assumption the property that $(L \otimes_{\IZ}\IQ)^C  = \{0\}$ 
   for all subgroups $C \subseteq G$ with $C \not= \{0\}$. Hence the
  exists a natural number $k$ such that $L \otimes_{\IZ}\IQ$ is $\IQ
  G$-isomorphic to $\IQ[\exp(2\pi i/m)]^k$ 
  (see~\cite[Exercise~13.1 on  page~104]{Serre(1977)}).  Since $n = \dim_{\IQ}(L \otimes_{\IZ}\IQ)$ and
  $\dim_{\IQ}\bigl(\IQ[\exp(2\pi i/m)]\bigr) = \prod_{i=1}^s (p_i-1) \cdot
  p_i^{r_i-1}$, assertion~\ref{the:H(C;L)_in_A(G):k} follows.
  \\[1mm]~\ref{the:H(C;L)_in_A(G):p-power} From
  Lemma~\ref{lem:H(C;L)_in_A(G)_prep} we obtain the following equality in $A(G)$
\[
\bigl[H^1(C;L)\bigr] =  
[G/G]  -  \frac{p_i^{r_i}}{m} \cdot  \ind_{G[i]}^G\left(\bigl[G[i]/G[i]\bigr]\right)
+  \frac{p_i^{r_i}}{m} \cdot \ind_{G[i]}^G\left(\bigl[H^1(C[i];L)\bigr]\right).
\]
We get from Theorem~\ref{the:prime_power_case} applied to $G[i]$ the following equation in $A(G[i])$
\[
\bigl[H^1(C[i];L)\bigr] = 
p^{k[i]} \cdot \bigl[G[i]/G[i]\bigr] + 
\sum_{l = 0}^{r_i-c_i-1} \frac{p_i^{k[i]p_i^{r_i-c_i-l}} - p_i^{k[i]p_i^{r_i-c_i-1-l}}}{p_i^{r_i -c_i-l}} \cdot \bigl[G[i]/G_{c_j + l}\bigr].
\]
Since $\ind_{G[i]}^G\left(\bigl[G[i]/G_{c_j + l}\bigr]\right) 
= G/G[i]_{c_i + l}'$, assertion~\ref{the:H(C;L)_in_A(G):p-power} follows.
\\[1mm]~\ref{the:H(C;L)_in_A(G):not_p-power}
By Lemma~\ref{lem:H(C;L)} we have $H^1(C;L) =  \{0\}$ and hence $\bigl[H^1(C;L)\bigr] = [G/G]$.
\\[1mm]~\ref{the:H(C;L)_in_A(G):maximal_subgroups}
Claim~\ref{the:H(C;L)_in_A(G):maximal_subgroups:G} follows from 
Theorem~\ref{the:prime_power_case} if $G$ is a $p$-group.
If $G$ is not a $p$-group, claim~\ref{the:H(C;L)_in_A(G):maximal_subgroups:G} follows 
from Theorem~\ref{the:calm(C)}~\ref{the:calm(C):|calm(C)|_and_H:|calm(C)_and_homology} since 
$|H^1(G;L)| = 1$ holds by  assertion~\ref{the:H(C;L)_in_A(G):not_p-power}.
The claim~\ref{the:H(C;L)_in_A(G):maximal_subgroups:non-p-power} from
Theorem~\ref{the:calm(C)}~\ref{the:calm(C):calm(C)} and~\ref{the:calm(C):|calm(C)|_and_H}
since $|H^1(C;L)| = 1$ holds by  assertion~\ref{the:H(C;L)_in_A(G):not_p-power}. 

It remains to prove
claim~\ref{the:H(C;L)_in_A(G):maximal_subgroups:p-power_and_not_G}.
We begin with the case $j = r_i$. Then we get from Theorem~\ref{the:calm(C)}~\ref{the:calm(C):|calm(C)|_and_H}
and assertions~\ref{the:H(C;L)_in_A(G):p-power},~\ref{the:H(C;L)_in_A(G):maximal_subgroups:G}
and~\ref{the:H(C;L)_in_A(G):maximal_subgroups:non-p-power}
\[
\begin{array}{lclcl}
|\calc(C)| & = & 1 + \frac{(p_i^{k[i]} -1) \cdot p_i^{r_i}}{m} & & 
\\
|\calc(D)| & = & 1 & & \text{for}\; C \subsetneq D \subseteq G.
\end{array}
\]
Now apply Theorem~\ref{the:calm(C)}~\ref{the:calm(C):calm(C)}.  Finally we
consider the case $j \le r_i-1$. Let $D$ be an subgroup of $G$ with $C
\subsetneq D$.  If $D$ is not a $p$-group, we get $|\calc(D)| = 1$ from
Theorem~\ref{the:calm(C)}~\ref{the:calm(C):|calm(C)|_and_H} and
assertion~\ref{the:calm(C):|calm(C)|_and_H}. Hence the image of 
 $\calc(D) \to \calc(C)$ is contained in the image of $\calc(G) \to \calc(C)$. Let 
$C_l \subseteq G$ be the subgroup of order $p_i^{l}$ for $l = 0,1,2, \ldots, r_i$.
Then for every subgroup $D \subseteq G$ with $C \subsetneq D$ the image of
$\calc(D) \to \calc(C)$ is contained in the image of $\calc(C_{j+1}) \to
\calc(C)$. We conclude from Theorem~\ref{the:calm(C)}~\ref{the:calm(C):calm(C)}
and~\ref{the:calm(C):|calm(C)|_and_H} and
assertion~\ref{the:H(C;L)_in_A(G):p-power}
\begin{eqnarray*}
|\calm(C)| 
& = &
|\calc(C)| - |\calc(C_{j+1})|
\\
& = &
1 +  \frac{p_i^{k[i]} -1\bigr) \cdot p_i^{r_i}}{m} +  
\sum_{l = 0}^{r_i-j-1} \frac{p_i^{k[i]p_i^{r_i-j-l}+j+l} - p_i^{k[i]p_i^{r_i-j-l-1}+j+l}}{m}
\\
& & 
\quad  -1 - \frac{(p_i^{k[i]} -1\bigr) \cdot p_i^{r_i}}{m}  
- \sum_{l = 0}^{r_i-j-2} \frac{(p_i^{k[i]p_i^{r_i-j-1-l}+j+1+l} - p_i^{k[i]p_i^{r_i-j-1-l-1}+j+1+l}}{m}
\\
& = &
\sum_{l = 0}^{r_i-j-1} \frac{p_i^{k[i]p_i^{r_i-j-l}+j+l} - p_i^{k[i]p_i^{r_i-j-l-1}+j+l}}{m}
\\
& & 
\quad 
- \sum_{l = 1}^{r_i-j-1} \frac{p_i^{k[i]p_i^{r_i-j-l}+j+l} - p_i^{k[i]p_i^{r_i-j-l-1}+j+l}}{m}
\\
& = & 
\frac{p_i^{k[i]p_i^{r_i-j}+j} - p_i^{k[i]p_i^{r_i-j-1}+j}}{m}.
\end{eqnarray*}
\end{proof}

\begin{example}[$m$ square-free]\label{exa:M_square-free}
Suppose that $m$ is square-free, or, equivalently, $r_1 = r_2 = \cdots  = r_s = 1$.
Let $k$ be the natural number uniquely determined by
$n = k \cdot \prod_{j=1}^s (p_i -1)$. 
If $s = 1$, then every non-trivial finite subgroup of $\Gamma$ is cyclic of order $p$ and maximal among finite subgroups
and we conclude from Theorem~\ref{the:prime_power_case}
\[
|\calm| = p^k.
\]
If $s \ge 2$, then
\[
|\calm(C)| = 
\begin{cases}
1 
& 
C = G;
\\
0 & C \not = G \; \text{and}\; C \not= G[i] \;  \text{for every } \; i \in \{1,2, \ldots ,s\};
\\
\frac{p_i \cdot (p_i^{n/(p_i - 1)} -1)}{m}
& 
C = G[i] \;\text{for some} \; i \in \{1,2, \ldots ,s\}.
\end{cases}
\]
\end{example}

\begin{remark} It follows from the proof that all fractions appearing Theorem~\ref{the:H(C;L)_in_A(G)}
are indeed natural numbers. As an illustration we check this in Example~\ref{exa:M_square-free}.
We have to show that for every $j \in \{1,2, \ldots ,s\}$ with  $j \not= i$ the prime number $p_j$
divides $p_i^{k \cdot\prod_{j \in \{1,2, \ldots ,s\}, j \not= i} (p_j - 1)} -1$. Now the claim follows from
Fermat's little theorem, i.e., from  $p_i^{p_j - 1} \equiv 1 \mod p_j$.
\end{remark}

\begin{lemma} \label{lem:m_even_and_Lambda_odd}
Suppose that $m$ is even. Then
\[
\left(\Lambda^{\odd}\IQ[\zeta_m]\right)^G = 0.
\]
\end{lemma}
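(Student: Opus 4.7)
The plan is to exploit the unique order-two subgroup $\IZ/2 \subseteq G = \IZ/m$, which exists precisely because $m$ is even. First I would observe that the generator of $\IZ/m$ acts on $\IQ[\zeta_m]$ by multiplication by $\zeta_m$, so the nontrivial element of $\IZ/2 \subseteq G$, namely $t^{m/2}$, acts as multiplication by $\zeta_m^{m/2} = -1$. Consequently the induced action of this element on $\Lambda^j \IQ[\zeta_m]$ is multiplication by $(-1)^j$, which on $\Lambda^{\odd}\IQ[\zeta_m]$ is just $-\id$.

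Then I would conclude: any element of $\left(\Lambda^{\odd}\IQ[\zeta_m]\right)^G$ is in particular fixed by the subgroup $\IZ/2 \subseteq G$, hence equals its own negative, hence is zero. This is precisely the argument already used to handle the case $i$ odd in the proof of Theorem~\ref{the:Topological_K-theory_for_Zr_rtimes_Z/n}~\ref{the:Topological_K-theory_for_Zr_rtimes_Z/n:non-torsion:n_even}, just transferred from $\Lambda^i \IZ^n$ to $\Lambda^{\odd}\IQ[\zeta_m]$ (which is valid because the $\IZ/2$-action on $\IZ^n$ in that context was also forced to be $-\id$ by the freeness assumption).

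There is essentially no obstacle: the argument is a one-line sign computation once one identifies the order-two element's action. The only thing to be mildly careful about is the direct sum convention $\Lambda^{\odd} = \bigoplus_{l\ge 0} \Lambda^{2l+1}$ from Notation~\ref{not:lambda_something}, which is immediate since the $\IZ/2$-action acts by $-1$ on every summand $\Lambda^{2l+1}\IQ[\zeta_m]$, so the $\IZ/2$-fixed subspace of the direct sum is zero, and a fortiori so is the $G$-fixed subspace.
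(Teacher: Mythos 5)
Your proof is correct and is essentially identical to the paper's: both identify the order-two element of $G$ (acting by $-\id$ on $\IQ[\zeta_m]$ since $\zeta_m^{m/2}=-1$), deduce that it acts by $-\id$ on each $\Lambda^{2l+1}\IQ[\zeta_m]$, and conclude that the fixed-point space vanishes. No gaps.
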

\begin{proof}
Since $G$ is even, it contains an element $g$ of order $2$. This elements acts by $-\id$ in $\IQ[\zeta_m]$.
Hence it acts by $-\id$ on $\Lambda^{2l+1}\IQ[\zeta_m]$ for every $l \ge 0$. This implies $\Lambda^{2l+1}\IQ[\zeta_m]^{\langle g \rangle} = \{0\}$ 
for every $l \ge 0$ and hence $\left(\Lambda^{\odd}\IQ[\zeta_m]\right)^G = 0$.
\end{proof}

\begin{remark}[Computing $s_i$ for even $m$]
\label{rem:computing_s_i_for_even_m}
Lemma~\ref{lem:m_even_and_Lambda_odd} implies for even $m$ that 
\begin{eqnarray*}
\sum_{l \in \IZ} \rk_{\IZ}\bigl((\Lambda^{2l+1} \IZ^n)^{\IZ/m}\bigr) & = & 0;
\\
\sum_{l \in \IZ} \rk_{\IZ}\bigl((\Lambda^{2l} \IZ^n)^{\IZ/m}\bigr)  & = & 
\sum_{l \in \IZ} (-1)^l \cdot \rk_{\IZ}\bigl((\Lambda^{l} \IZ^n)^{\IZ/m}\bigr).
\end{eqnarray*}
The alternating sum $\sum_{l \in \IZ} (-1)^l \cdot \rk_{\IZ}\bigl((\Lambda^{l} \IZ^n)^{\IZ/m}\bigr)$ will be computed
in terms of $\calm$ in Theorem~\ref{the:a_priori_estimates}~\ref{the:a_priori_estimates:G/backslashunderlineEG}
(see~Remark~\ref{rem:expressing_sums_involving_call(M)}).
Hence we will be able to compute the numbers
$s_i$ appearing in  Theorem~\ref{the:Topological_K-theory_for_Zr_rtimes_Z/n}~\ref{the:Topological_K-theory_for_Zr_rtimes_Z/n:explicite}
explicitly using Theorem~\ref{the:H(C;L)_in_A(G)}~\ref{the:H(C;L)_in_A(G):maximal_subgroups}
provided that $m$ is even.
\end{remark}


\typeout{-------------------------- Section 8: Equivariant Euler characteristics  --------------------}

\section{Equivariant Euler characteristics}
\label{sec:Equivariant_Euler_characteristics}

Recall that we are considering  the extension $1 \to \IZ^n \to \Gamma \xrightarrow{\pi} \IZ/m\to 1$ as it appears in
Theorem~\ref{the:Topological_K-theory_for_Zr_rtimes_Z/n}
and we will abbreviate $L = \IZ^n$ and $G = \IZ/m$.

Given a  finite $G$-$CW$-complex, define its \emph{G-Euler characteristic}
\begin{eqnarray}
\chi^G(X) & := &\sum_{c} (-1)^{d(c)} \cdot [G/G_c] \quad \in A(G)
\label{chiG(X)}
\end{eqnarray}
where $c$ runs over the equivariant cells $G/G_c \times D^{d(c)}$.

\begin{theorem}[Equivariant Euler characteristic of $L\backslash \underline{E}\Gamma$]\
\label{the:chiG(LbackslashunderlineEGamma}
\item \label{the:chiG(LbackslashunderlineEGamma:chiG_in_A(G)}
We get in the Burnside ring  $A(G)$
\[
\chi^{G}(L\backslash \underline{E}\Gamma)
=  a \cdot [G] + \sum_{(M) \in \calm} [G/\pi(M)],
\]
where the integer $a$  is given by $- \sum_{(M) \in \calm} \frac{1}{|M|}$.
\end{theorem}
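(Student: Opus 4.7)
The plan is to compare both sides via the injective character (mark) homomorphism $\ch^G \colon A(G) \hookrightarrow \prod_{H \subseteq G} \IZ$ defined by $[S] \mapsto (|S^H|)_H$. By Lemma~\ref{lem:extension_splits} we have $\eub{\Gamma} = \IR^n$, so $X := L \backslash \eub{\Gamma}$ is the $n$-torus $T^n$ with induced $G$-action and carries a finite $G$-CW-structure; the standard identity $\ch^G_H(\chi^G(X)) = \chi(X^H)$ then holds for every subgroup $H \subseteq G$, and it suffices to verify that $\chi(X^H)$ equals the character of the proposed right-hand side at each $H$.

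The first step is to unpack the character of the right-hand side. Since $L$ is torsion-free, $\pi$ restricts to an isomorphism on every finite subgroup of $\Gamma$, so $|\pi(M)| = |M|$ for $(M) \in \calm$. Because $G$ is abelian, $\ch^G_H[G/\pi(M)]$ equals $[G:\pi(M)]$ when $H \subseteq \pi(M)$ and vanishes otherwise, while $\ch^G_H[G]$ vanishes unless $H = \{1\}$. At $H = \{1\}$ the character of the right-hand side equals $am + m\sum_{(M) \in \calm} 1/|M|$, which matches $\chi(T^n) = 0$ precisely when $a = -\sum_{(M) \in \calm} 1/|M|$. The substantive task is therefore to establish, for every non-trivial $H \subseteq G$, the identity
\[
|X^H| \;=\; \sum_{(M)\in \calm,\; H \subseteq \pi(M)} [G:\pi(M)].
\]

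The main obstacle is this counting identity, and the plan is to prove it by constructing the bijection $(M) \mapsto G \cdot [x_M]$ from $\{(M)\in\calm : H \subseteq \pi(M)\}$ onto the set of $G$-orbits in $X^H$, where $x_M \in \IR^n$ denotes the unique fixed point of $M$ (unique because every non-trivial element of $M$ has linear part in $G$ with no eigenvalue $1$, using $L^g = \{0\}$ for non-trivial $g \in G$). The crux is to show that for a lift $x \in \IR^n$ of any $[x] \in X^H$ the stabilizer $\Gamma_x$ is itself a maximal finite subgroup of $\Gamma$. Indeed, $\Gamma_x$ is finite and mapped isomorphically by $\pi$ onto $G_{[x]} \supseteq H$ (since $L$ acts freely by translations), hence non-trivial; by Lemma~\ref{lem:maximal_maximal_finite_subgroups} it is contained in a unique maximal finite subgroup $M_x$. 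The same eigenvalue argument shows that $M_x$ has a unique $\IR^n$-fixed point, which must equal $x$, forcing $M_x \subseteq \Gamma_x$ and therefore $M_x = \Gamma_x$. Consequently $|G \cdot [x]| = [G:\pi(M_x)]$, and two points $[x], [x'] \in X^H$ lie in the same $G$-orbit iff $x' = \gamma x$ for some $\gamma \in \Gamma$, equivalently iff $M_{x'} = \gamma M_x \gamma^{-1}$; this yields the required bijection and completes the plan.
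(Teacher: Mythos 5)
Your argument is correct, but it proceeds along a genuinely different route from the paper's. The paper first invokes the $\Gamma$-pushout of L\"uck--Weiermann (together with Lemma~\ref{lem:extension_splits} and Lemma~\ref{lem:maximal_maximal_finite_subgroups}) to manufacture a \emph{finite} $\Gamma$-$CW$-model $Z$ for $\eub{\Gamma}$ whose singular part is exactly $\coprod_{(M) \in \calm} \Gamma/M$; passing to $L\backslash Z$ this forces $\chi^G(L\backslash\eub{\Gamma})$ to have the shape $a\cdot[G] + \sum_{(M)}[G/\pi(M)]$ with only the coefficient $a$ of the free summand unknown, and $a$ is then pinned down by evaluating the single character $\ch^G_{\{1\}}$ against $\chi(T^n)=0$. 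You instead evaluate the \emph{full} character map $\ch^G$ and verify the identity at every subgroup $H$, which requires the additional geometric input that for non-trivial $H$ the fixed set $(L\backslash\IR^n)^H$ is a finite set whose $G$-orbits biject with $\{(M)\in\calm : H\subseteq\pi(M)\}$, each orbit having length $[G:\pi(M)]$. That fixed-point analysis (unique fixed point of each non-trivial finite subgroup because $L^g=\{0\}$ forces the linear part of $g$ to have no eigenvalue $1$, and the identification of point stabilizers with maximal finite subgroups via Lemma~\ref{lem:maximal_maximal_finite_subgroups}) is sound, and in effect re-derives by hand the content of the L\"uck--Weiermann pushout that the paper imports as a black box. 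The trade-off: the paper's proof is shorter and formal, relying on the cited structural result; yours is self-contained and more geometric, at the cost of having to check the orbit-counting identity at every $H$ rather than only at $H=\{1\}$. Both correctly conclude with the injectivity of the character map (explicit in your write-up, implicit in the paper's use of the cell structure).
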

\begin{proof}
Since the $G$-action on $L$ is free outside the origin,
we obtain  from Lemma~\ref{lem:maximal_maximal_finite_subgroups}
and~\cite[Corollary~2.11]{Lueck-Weiermann(2007)} a cellular
$\Gamma$-pushout
\[
\xycomsquareminus{\coprod_{(M) \in \calm} \Gamma \times _M EM}{i_0}{E\Gamma}
  {\coprod_{(M) \in \calm} \pr_M }{f}
  {\coprod_{(M) \in \calm} \Gamma/M}{i_1}{X}
\]
such that $X$ is a model for $\eub\Gamma$.
There exists a finite $\Gamma$-$CW$-model $Y$ for $\eub\Gamma$
by Lemma~\ref{lem:extension_splits}.
Choose a $\Gamma $-homotopy equivalence
$f \colon Y \to X$. Let $Z$ be defined by the $\Gamma$-pushout
\[
\xycomsquareminus{Y^{>\{1\}}}{}{Y}{f^{>\{1\}}}{}{X^{>\{1\}}}{}{Z}
\]
Then $Z$ is a finite $\Gamma$-$CW$-model for $\underline{E}\Gamma$ with
$Z^{>\{1\}} = X^{>\{1\}} = \coprod_{(M) \in \calm} \Gamma/M$. In the sequel we write
$\underline{E}\Gamma$ for $Z$. Then $L\backslash \underline{E}\Gamma$
is a finite $G$-$CW$-complex with 
\[
\bigl(L\backslash \underline{E}\Gamma\bigr)^{>\{1\}} = \coprod_{(M) \in \calm} G/\pi(M).
\]
Hence we get for some integer $a$ in $A(G)$
\[
\chi^G\bigl(L\backslash \underline{E}\Gamma\bigr) 
= 
a \cdot [G] + \sum_{(M) \in \calm} [G/\pi(M)].
\]
Let $\ch^G_{\{1\}} \colon A(G) \to \IZ$ be the map sending the class of a finite $G$-set to its cardinality.
It sends $\chi^G\bigl(L\backslash \underline{E}\Gamma\bigr)$ to the (non-equivariant)
Euler characteristic of $L\backslash \underline{E}\Gamma$ which is zero since 
$L\backslash \underline{E}\Gamma$ is homotopy equivalent to the $n$-torus.
Hence we get
\begin{eqnarray*}
0 
& = & 
\ch^G_{\{1\}}\left(\chi^G\bigl(L\backslash \underline{E}\Gamma\bigr) \right)
\\
& = & 
\ch^G_{\{1\}}\left(a \cdot [G] + \sum_{(M) \in \calm} [G/\pi(M)]\right)
\\
& = & 
a \cdot |G| + \sum_{(M) \in \calm} |G/\pi(M)|
\\
& = & 
|G| \cdot \left(a + \sum_{m \in \calm} \frac{1}{|M|}\right).
\end{eqnarray*}
This implies $a =  - \sum_{(M) \in \calm} \frac{1}{|M|}$.
\end{proof}

\begin{theorem}[Rational permutation modules]
\label{the:rational_permutation_modules}\
\begin{enumerate}
\item \label{the:rational_permutation_modules:iso}
Sending a finite $G$-set to the associated $\IQ G$-permutation module defines an isomorphism of rings
\[
\perm \colon A(G) \to R_{\IQ}(G);
\]
\item \label{the:rational_permutation_modules:iso:chi_and_h_general}
If $X$ is a finite $G$-$CW$-complex, then
\[
\perm\bigl(\chi^G(X)\bigr)
=  \sum_{i \ge 0} (-1)^i \cdot [H_i(X;\IQ)] 
= [K_0(X) \otimes_{\IZ} \IQ] - [K_1(X) \otimes_{\IZ} \IQ].
\]
We have
\[
\perm\bigl(\chi^G(L\backslash \eub\Gamma)\bigr)
= \sum_{i \ge 0} (-1)^i \cdot [\Lambda^i L \otimes_{\IZ} \IQ].
\]
\end{enumerate}
\end{theorem}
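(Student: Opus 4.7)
The plan for part~\ref{the:rational_permutation_modules:iso} exploits that $G = \IZ/m$ is cyclic. Both $A(G)$ and $R_{\IQ}(G)$ are free abelian groups of rank equal to the number of divisors of $m$: the subgroups of $G$ are exactly the $H_d \subseteq G$ of order $d$ for $d \mid m$, and the rational irreducibles of $G$ are the cyclotomic modules $V_e := \IQ(\zeta_e)$ for $e \mid m$ (with $G$ acting via the quotient $G \to G/H_{m/e} \cong \IZ/e$). The map $\perm$ is a ring homomorphism because $\IQ[S \times T] \cong \IQ[S] \otimes_{\IQ} \IQ[T]$ as $\IQ G$-modules. I would then compute
\[
\perm([G/H_d]) = [\IQ[G/H_d]] = \sum_{e \mid m/d} [V_e],
\]
which is the standard decomposition of the regular representation of the cyclic group $G/H_d \cong \IZ/(m/d)$. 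Ordering the divisors of $m$ compatibly with divisibility, the transition matrix between the bases $\{[G/H_d]\}_{d \mid m}$ and $\{[V_e]\}_{e \mid m}$ becomes upper triangular with ones on the diagonal and is therefore invertible over $\IZ$, proving that $\perm$ is an isomorphism.

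For the first identity in part~\ref{the:rational_permutation_modules:iso:chi_and_h_general}, my plan is to read off $\perm(\chi^G(X))$ directly from the cellular chain complex $C_*(X; \IQ)$, which is a bounded complex of $\IQ G$-modules with $C_i(X;\IQ) = \bigoplus_{d(c) = i} \IQ[G/G_c]$. Directly from the definitions of $\chi^G(X)$ and $\perm$ one obtains $\perm(\chi^G(X)) = \sum_{i \ge 0} (-1)^i [C_i(X;\IQ)]$ in $R_{\IQ}(G)$. Since $\IQ G$ is semisimple, every short exact sequence of $\IQ G$-modules splits, so alternating sums of classes in $R_{\IQ}(G)$ are invariant under passage to homology (applied to the short exact sequences of cycles, boundaries, and homology of $C_*(X;\IQ)$), which yields $\perm(\chi^G(X)) = \sum_{i \ge 0} (-1)^i [H_i(X;\IQ)]$.

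For the $K$-theoretic identity I would invoke the rational Chern character, which furnishes natural isomorphisms $K_0(X) \otimes_{\IZ} \IQ \cong \bigoplus_i H_{2i}(X;\IQ)$ and $K_1(X) \otimes_{\IZ} \IQ \cong \bigoplus_i H_{2i+1}(X;\IQ)$. Naturality in $X$ forces these to commute with every self-map of $X$, so they are $G$-equivariant and constitute identities in $R_{\IQ}(G)$. Taking the alternating difference and combining with the previous step gives the claimed formula.

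Finally, for $X = L \backslash \eub{\Gamma}$, Lemma~\ref{lem:extension_splits} provides $\IR^n$ (with its natural $\Gamma$-action) as a model for $\eub{\Gamma}$, so $L \backslash \eub{\Gamma}$ is the $n$-torus $L \backslash \IR^n$ with the residual $G$-action induced by the conjugation action on $L$. Its rational homology is $\Lambda^* L \otimes_{\IZ} \IQ$ as a graded $\IQ G$-module, and substituting into the formula just established yields $\perm(\chi^G(L \backslash \eub{\Gamma})) = \sum_{i \ge 0} (-1)^i [\Lambda^i L \otimes_{\IZ} \IQ]$. No step is really an obstacle; the only point requiring a little care is the triangular argument proving that $\perm$ is an isomorphism for cyclic $G$.
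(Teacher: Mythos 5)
Your proposal is correct, and for part~\ref{the:rational_permutation_modules:iso:chi_and_h_general} it follows essentially the same route as the paper: read off $\perm(\chi^G(X))$ from the cellular $\IQ G$-chain complex, use semisimplicity of $\IQ G$ to pass to homology, invoke the (natural, hence $G$-equivariant) homological Chern character for the $K$-theoretic identity, and identify $H_*(L\backslash\eub{\Gamma};\IQ)$ with $\Lambda^* L\otimes_{\IZ}\IQ$ using the torus model $L\backslash\IR^n$ from Lemma~\ref{lem:extension_splits}. The only genuine divergence is in part~\ref{the:rational_permutation_modules:iso}: the paper quotes Serre's exercise for surjectivity of $\perm$ and then deduces injectivity from the equality of ranks of the two free abelian groups, whereas you compute the matrix of $\perm$ explicitly in the bases $\{[G/H_d]\}_{d\mid m}$ and $\{[V_e]\}_{e\mid m}$ via $\perm([G/H_d])=\sum_{e\mid m/d}[V_e]$ and observe that it is triangular with ones on the diagonal. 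Both arguments rest on the same decomposition of $\IQ[G/H_d]$ for cyclic groups; yours is more self-contained and yields injectivity and surjectivity simultaneously, while the paper's is shorter at the cost of an external citation. Your triangularity claim does need the small remark that $e\mid m/d$ forces $e\le m/d$, so ordering both index sets by magnitude (pairing $[G/H_d]$ with $[V_{m/d}]$) does produce a unitriangular matrix; with that spelled out the argument is complete.
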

\begin{proof}~\ref{the:rational_permutation_modules:iso}
The map $\perm$ is surjective  by~\cite[Exercise~13.1~(c) on page~105]{Serre(1977)}.
Since the source and target of $\perm$ are finitely generated free abelian groups of the same rank,
$\perm$ is bijective.
\\[1mm]~\ref{the:rational_permutation_modules:iso:chi_and_h_general}
We compute in $R_{\IQ}(G)$, where $C_*(X)$ is the cellular $\IZ$-chain complex of $X$
\begin{multline*}
\perm\bigl(\chi^G(X)\bigr)  
= \sum_{i \ge 0} (-1)^i \cdot [C_i(X)\otimes_{\IZ} \IQ] 
\\
= \sum_{i \ge 0} (-1)^i \cdot \bigl[H_i(C_*(X)\otimes_{\IZ} \IQ)\bigr] 
= \sum_{i \ge 0} (-1)^i \cdot [H_i(X;\IQ)].
\end{multline*}
The homological Chern character~\cite{Dold(1962)} yields an isomorphism of $\IQ G$-modules
\begin{eqnarray}
\bigoplus_{j \in \IZ} H_{i + 2j}(X;\IQ) & \xrightarrow{\cong} & K_i(X) \otimes_{\IZ} \IQ
\label{homological_Chern-Character}
\end{eqnarray}

The cup product induces natural isomorphisms
\[
\Lambda_{\IQ}^i H^1(L\backslash \underline{E}\Gamma,\IQ)
\xrightarrow{\cong} H^i(L\backslash \underline{E}\Gamma,\IQ),
\]
since $L\backslash \underline{E}\Gamma$ is homotopy equivalence to the $n$-torus.
There are  natural isomorphisms $L\otimes_{\IZ} \IQ\xrightarrow{\cong} H_1(L\backslash \underline{E}\Gamma;\IQ)$,
$\Lambda_{\IQ}^i \bigl((L \otimes_{\IZ} \IQ)^*\bigr)) \xrightarrow{\cong} (\Lambda^i L \otimes_{\IZ} \IQ)^*$
and $H_i(L\backslash \underline{E}\Gamma;\IQ)^* \xrightarrow{\cong}  H^i(L\backslash \underline{E}\Gamma;\IQ)$.
For every finitely generated $\IQ G$-module $M$ there is a canonical $\IQ G$-isomorphism $M \xrightarrow{\cong} (M^*)^*$.
This implies that the $\IQ G$-modules $H_i(L\backslash \underline{E}\Gamma;\IQ)$
and $\Lambda^i L \otimes_{\IZ} \IQ$ are isomorphic. 
\end{proof}

\begin{lemma} \label{lem:quot} Let $X$ be a finite $G$-$CW$-complex. The map of
  abelian groups
  \[
  \quot \colon A(G) \to \IZ, \quad [S] \mapsto |G\backslash S|
  \]
  sends the $G$-Euler characteristic
  $\chi^G(X)$ to the Euler characteristic $\chi(G\backslash X)$ of the 
  quotient. We have 
  \[
   \chi(G\backslash X) = \dim_{\IQ}\bigl(K_0(G\backslash X)
  \otimes_{\IZ} \IQ\bigr) - \dim_{\IQ}\bigl(K_1(G\backslash X) \otimes_{\IZ}
  \IQ\bigr).\]
\end{lemma}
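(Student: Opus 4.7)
The plan is to prove the two asserted equalities separately, both by reducing to the classical (non-equivariant) Euler characteristic of the quotient $CW$-complex $G \backslash X$.

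For the first equality $\quot(\chi^G(X)) = \chi(G\backslash X)$, I would start from the definition $\chi^G(X) = \sum_c (-1)^{d(c)} \cdot [G/G_c]$, where $c$ ranges over the equivariant cells of $X$. Since $\quot([G/G_c]) = |G \backslash (G/G_c)| = 1$ for every equivariant cell, we get $\quot(\chi^G(X)) = \sum_c (-1)^{d(c)}$. The point is that the $G$-$CW$-structure on $X$ descends to a (non-equivariant) $CW$-structure on $G\backslash X$ under which the equivariant cell $G/G_c \times D^{d(c)}$ quotients to a single cell of dimension $d(c)$. Therefore the right-hand side is precisely the cellular formula for $\chi(G\backslash X)$.

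For the second equality, I would use that $G\backslash X$ is a finite $CW$-complex, so both $K_0(G\backslash X)$ and $K_1(G\backslash X)$ are finitely generated, and the rational Chern character (Dold, as already cited in the proof of Theorem~\ref{the:rational_permutation_modules} via~\eqref{homological_Chern-Character}) provides natural isomorphisms
\[
\bigoplus_{j \in \IZ} H_{i + 2j}(G\backslash X;\IQ) \xrightarrow{\cong} K_i(G\backslash X) \otimes_{\IZ} \IQ
\]
for $i = 0,1$. Taking $\IQ$-dimensions and forming the alternating sum gives
\[
\dim_{\IQ}\bigl(K_0(G\backslash X)\otimes_{\IZ}\IQ\bigr) - \dim_{\IQ}\bigl(K_1(G\backslash X)\otimes_{\IZ}\IQ\bigr) = \sum_{i \ge 0} (-1)^i \dim_{\IQ} H_i(G\backslash X;\IQ),
\]
and the right-hand side is $\chi(G\backslash X)$ by the standard identification of the Euler characteristic with the alternating sum of rational Betti numbers.

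Neither step presents a genuine obstacle; the lemma is essentially a bookkeeping statement. The only subtlety worth flagging explicitly is the first one, namely that the equivariant cell decomposition of $X$ does descend to an honest cell decomposition of $G\backslash X$ with matching dimensions, so that $\quot$ on $\chi^G(X)$ is literally the cellular Euler characteristic of the quotient.
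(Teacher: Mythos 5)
Your proposal is correct and follows essentially the same route as the paper: the first equality by counting (equivariant) cells and observing that each equivariant cell of $X$ contributes a single cell of the same dimension to $G\backslash X$, and the second equality via the homological Chern character~\eqref{homological_Chern-Character}. You have merely spelled out the cell-counting step that the paper leaves to inspection.
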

\begin{proof} We get $\quot(\chi^G(X)) = \chi(G\backslash X)$ by inspecting the
  definitions of the Euler characteristics in terms of counting (equivariant)
  cells.  The second claim follows from the homological Chern
  character~\eqref{homological_Chern-Character}.
\end{proof}

\begin{lemma} \label{lem:kg} Let $X$ be a finite $G$-$CW$-complex. The map of
  abelian groups
  \[
  \kg \colon A(G) \to \IZ, \quad [G/H] \mapsto |H|
  \]
  satisfies 
  \[
   \kg\bigl(\chi^G(X)\bigr) = \dim_{\IQ}\bigl(K_0^G(X)
  \otimes_{\IZ} \IQ\bigr) - \dim_{\IQ}\bigl(K_1^G(X) \otimes_{\IZ} \IQ\bigr).
  \]
\end{lemma}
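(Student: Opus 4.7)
Write $\chi_K^G(X) := \dim_\IQ\bigl(K_0^G(X) \otimes_\IZ \IQ\bigr) - \dim_\IQ\bigl(K_1^G(X) \otimes_\IZ \IQ\bigr)$ for a finite $G$-$CW$-complex $X$. The plan is to prove $\chi_K^G(X) = \kg(\chi^G(X))$ by induction on the number of equivariant cells of $X$, in parallel with the proof of Lemma~\ref{lem:quot}.

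The first step is additivity. For any cellular $G$-pushout $X = X_1 \cup_A X_2$ of finite $G$-$CW$-complexes the equivariant Mayer-Vietoris long exact sequence
\[
\cdots \to K_i^G(A) \to K_i^G(X_1) \oplus K_i^G(X_2) \to K_i^G(X) \to K_{i-1}^G(A) \to \cdots
\]
gives, after tensoring with $\IQ$ and taking the alternating sum of rational dimensions, $\chi_K^G(X) = \chi_K^G(X_1) + \chi_K^G(X_2) - \chi_K^G(A)$. By inclusion--exclusion on cells $\chi^G$ satisfies the same additivity in $A(G)$, and $\kg$ is $\IZ$-linear. Therefore it is enough to verify the identity when $X$ is obtained from a $G$-subcomplex $X'$ by attaching a single equivariant $d$-cell $c = G/H \times D^d$ along $G/H \times S^{d-1}$. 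By~\eqref{chiG(X)} the difference $\kg(\chi^G(X)) - \kg(\chi^G(X'))$ equals $(-1)^d |H|$. On the $K$-theoretic side, $D^d$ is $G$-contractible, so $\chi_K^G(G/H \times D^d) = \chi_K^G(G/H)$, and the long exact sequence of the pair $(G/H \times D^d, G/H \times S^{d-1})$ together with the suspension isomorphism $\widetilde{K}_i^G(G/H_+ \wedge S^d) \cong K_{i-d}^G(G/H)$ and Bott periodicity produces a splitting $K_i^G(G/H \times S^{d-1}) \cong K_i^G(G/H) \oplus K_{i-d+1}^G(G/H)$, hence $\chi_K^G(G/H \times S^{d-1}) = (1+(-1)^{d-1})\chi_K^G(G/H)$. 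Additivity then yields $\chi_K^G(X) - \chi_K^G(X') = (-1)^d \chi_K^G(G/H)$.

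It remains to identify $\chi_K^G(G/H)$ with $|H|$. The induction isomorphism $K_i^G(G/H) \cong K_i^H(\pt) = K_i(C^*_r(H))$ together with $K_0(C^*_r(H)) = R_\IC(H)$ and $K_1(C^*_r(H)) = 0$ for the finite group $H$ gives $\chi_K^G(G/H) = \rk_\IZ R_\IC(H)$, which equals the number of conjugacy classes of $H$. Since $G = \IZ/m$ is cyclic, every subgroup $H \subseteq G$ is abelian, so this number is exactly $|H|$, completing the induction. The argument is essentially bookkeeping of signs and dimensions; the only point of substance beyond standard properties of equivariant $K$-homology is precisely the abelianness of $G = \IZ/m$, without which $\rk_\IZ R_\IC(H)$ would fail to equal $|H|$.
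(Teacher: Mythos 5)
Your proof is correct and follows essentially the same route as the paper: additivity of $\chi_{K_*^G}$ under $G$-pushouts via Mayer--Vietoris and Bott periodicity, reduction to counting equivariant cells, and the identification $\chi_{K_*^G}(G/H) = \dim_{\IQ}(R_{\IC}(H)\otimes_{\IZ}\IQ) = |H|$ using $K_*^G(G/H)\cong K_*^H(\pt)$. Your explicit remark that the final equality uses the abelianness of $H\subseteq G=\IZ/m$ (so that the number of irreducible representations equals $|H|$) is a point the paper leaves implicit, but the argument is the same.
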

\begin{proof}
  The expression
  \[
  \chi_{K_*^G}(X) := \dim_{\IQ}\bigl(K_0^G(X) \otimes_{\IZ}
  \IQ\bigr) - \dim_{\IQ}\bigl(K_1^G(X) \otimes_{\IZ} \IQ\bigr)
 \] 
  depends only on
  the $G$-homotopy type of $X$ and is additive under $G$-pushouts of finite
  $G$-$CW$-complexes.  The latter follows from the Mayer-Vietoris sequence associated
  to such a $G$-pushout and Bott periodicity.  Hence $\chi_{K_*^G}(X)$ can be
  computed by counting equivariant cells, where the contribution of an
  equivariant cell of the type $G/H \times D^d$ is
\begin{eqnarray*}
\chi_{K_*^G}(G/H \times D^d) 
& = & 
(-1)^d \cdot \chi_{K_*^G}(G/H)
\\
& = & 
(-1)^d \cdot \bigl(\dim_{\IQ}(K_0^G(G/H)\otimes_{\IZ} \IQ) - \dim_{\IQ}(K_1^G(G/H)\otimes_{\IZ} \IQ)\bigr)
\\
& = & 
(-1)^d \cdot \bigl(\dim_{\IQ}(K_0^H(\pt)\otimes_{\IZ} \IQ) - \dim_{\IQ}(K_1^H(\pt)\otimes_{\IZ} \IQ)\bigr)
 \\
& = & 
(-1)^d \cdot \bigl(\dim_{\IQ}(R_{\IC}(H)\otimes_{\IZ} \IQ) - \dim_{\IQ}(\{0\}\otimes_{\IZ} \IQ)\bigr)
 \\
& = & 
(-1)^d \cdot |H|.
\end{eqnarray*}
\end{proof}

\begin{theorem}[A priori estimates]\label{the:a_priori_estimates}\
\begin{enumerate}
\item \label{the:a_priori_estimates:R_Q(G)}
We obtain the following identity in $R_{\IQ}(G)$
\[
\left(- \sum_{(M) \in \calm} \frac{1}{|M|}\right) \cdot  [\IQ G] + \sum_{(M) \in \calm} \IQ[G/\pi(M)] 
=  \sum_{l \ge 0} (-1)^l \cdot [\Lambda^l L \otimes_{\IZ} \IQ];
\]
\item \label{the:a_priori_estimates:G/backslashunderlineEG}
We have the following identity of integers
\begin{eqnarray*}
\sum_{(M) \in \calm} \frac{|M|-1}{|M|} = \chi(\underline{B}\Gamma) 
& = &
\dim_{\IQ}\bigl(K_0(\underline{B}\Gamma) \otimes_{\IZ} \IQ\bigr)
- \dim_{\IQ}\bigl(K_1(\underline{B}\Gamma) \otimes_{\IZ}\IQ\bigr)
\\
& = & 
\sum_{l \in \IZ} (-1)^l \cdot \rk_{\IZ}\bigl((\Lambda^{l} L)^{G}\bigr);
\end{eqnarray*}

\item \label{the:a_priori_estimates:K(Cast)}
We have the following identity of integers
\begin{eqnarray*}
\sum_{(M) \in \calm} \frac{|M|^2 -1}{|M|} 
& = & 
\dim_{\IQ}\bigl(K_0^G(L\backslash \underline{E}\Gamma))\otimes_{\IZ}   \IQ\bigr) 
- \dim_{\IQ}\bigl(K_1^G(L\backslash \underline{E}\Gamma))\otimes_{\IZ}   \IQ\bigr) 
\\
& = & 
\dim_{\IQ}\bigl(K_0(C^*_r(\Gamma))\otimes_{\IZ}   \IQ\bigr) 
- \dim_{\IQ}\bigl(K_1(C^*_r(\Gamma))\otimes_{\IZ}   \IQ\bigr).
\end{eqnarray*}
\end{enumerate}
\end{theorem}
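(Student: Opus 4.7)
The plan is to deduce all three assertions by applying three different homomorphisms out of the Burnside ring $A(G)$ to the single identity
\[
\chi^G(L\backslash \eub{\Gamma}) = \left(-\sum_{(M)\in\calm}\frac{1}{|M|}\right)\cdot[G] + \sum_{(M)\in\calm}[G/\pi(M)]
\]
already established in Theorem~\ref{the:chiG(LbackslashunderlineEGamma}. Specifically, I will apply the ring homomorphism $\perm\colon A(G)\to R_{\IQ}(G)$ of Theorem~\ref{the:rational_permutation_modules} for part~(i), the map $\quot\colon A(G)\to\IZ$ of Lemma~\ref{lem:quot} for part~(ii), and the map $\kg\colon A(G)\to\IZ$ of Lemma~\ref{lem:kg} for part~(iii). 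Each is essentially a bookkeeping computation once one knows what the three maps do on generators.

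For assertion~\ref{the:a_priori_estimates:R_Q(G)}, applying $\perm$ to the right-hand side of the displayed identity reproduces verbatim the left-hand side of~\ref{the:a_priori_estimates:R_Q(G)}, while Theorem~\ref{the:rational_permutation_modules}~\ref{the:rational_permutation_modules:iso:chi_and_h_general} applied to $X = L\backslash \eub{\Gamma}$ identifies $\perm(\chi^G(L\backslash\eub{\Gamma}))$ with $\sum_{l\ge 0}(-1)^l[\Lambda^l L\otimes_{\IZ}\IQ]$.

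For assertion~\ref{the:a_priori_estimates:G/backslashunderlineEG}, since $\quot([G/H])=1$ for every subgroup $H\subseteq G$, the right-hand side of the identity maps under $\quot$ to $-\sum_{(M)}\frac{1}{|M|}+|\calm|=\sum_{(M)}\frac{|M|-1}{|M|}$. By Lemma~\ref{lem:quot} this equals $\chi(G\backslash L\backslash\eub{\Gamma})=\chi(\bub{\Gamma})$, which in turn is the alternating sum $\dim_{\IQ}K_0(\bub{\Gamma})\otimes_{\IZ}\IQ-\dim_{\IQ}K_1(\bub{\Gamma})\otimes_{\IZ}\IQ$ via the homological Chern character. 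To recognize this as $\sum_l(-1)^l\rk_{\IZ}((\Lambda^l L)^G)$, I would invoke Theorem~\ref{the:Cohomology_of_BGamma_and_bub(Gamma)}~\ref{the:Cohomology_of_BGamma_and_bub(Gamma):bub(Gamma)}, which gives the free part of $H^*(\bub{\Gamma})$ as precisely those ranks; the finite summands contribute nothing to the rational Euler characteristic.

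For assertion~\ref{the:a_priori_estimates:K(Cast)}, the key observation is that $M\cap L=\{1\}$ (since $L$ is torsion-free), so $\pi|_M$ is injective and $|\pi(M)|=|M|$. Hence $\kg$ sends the right-hand side of the identity to $-\sum_{(M)}\frac{1}{|M|}+\sum_{(M)}|M|=\sum_{(M)}\frac{|M|^2-1}{|M|}$, and Lemma~\ref{lem:kg} identifies this with $\dim_{\IQ}K_0^G(L\backslash\eub{\Gamma})\otimes_{\IZ}\IQ-\dim_{\IQ}K_1^G(L\backslash\eub{\Gamma})\otimes_{\IZ}\IQ$. To pass from $K_i^G(L\backslash\eub{\Gamma})$ to $K_i(C^*_r(\Gamma))$, I would use the isomorphism $K_i^\Gamma(\eub{\Gamma})\cong K_i^G(L\backslash\eub{\Gamma})$ induced by the free normal subgroup $L\subseteq\Gamma$ (via the induction structure on equivariant $K$-homology, analogous to Lemma~\ref{lem:ind_split_surjective} but in the cofree direction), followed by the Baum-Connes isomorphism $K_i^\Gamma(\eub{\Gamma})\cong K_i(C^*_r(\Gamma))$, which holds for $\Gamma$ by \cite{Higson-Kasparov(2001)}. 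I expect the main technical point of the proof to be this last identification $K_i^\Gamma(\eub{\Gamma})\cong K_i^G(L\backslash\eub{\Gamma})$; everything else is a formal consequence of Theorem~\ref{the:chiG(LbackslashunderlineEGamma} together with the three explicit computations of $\perm$, $\quot$, and $\kg$ on transitive $G$-sets.
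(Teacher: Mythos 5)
Your proposal is correct and follows essentially the same route as the paper: all three parts are obtained by applying $\perm$, $\quot$ and $\kg$ to the identity of Theorem~\ref{the:chiG(LbackslashunderlineEGamma}, with the isomorphism $K_i^{\Gamma}(\eub{\Gamma})\cong K_i^G(L\backslash\eub{\Gamma})$ coming from the freeness of the $L$-action and the Baum--Connes isomorphism handling the final step of part~\ref{the:a_priori_estimates:K(Cast)}, exactly as in the paper. The only (harmless) divergence is in part~\ref{the:a_priori_estimates:G/backslashunderlineEG}, where you identify $\chi(\bub{\Gamma})$ with $\sum_{l}(-1)^l\rk_{\IZ}\bigl((\Lambda^l L)^G\bigr)$ via the explicit cohomology computation of Theorem~\ref{the:Cohomology_of_BGamma_and_bub(Gamma)}, whereas the paper instead uses the isomorphism $K_i(\bub{\Gamma})\otimes_{\IZ}\IQ\cong\bigl(K_i(L\backslash\eub{\Gamma})\otimes_{\IZ}\IQ\bigr)^G$ for the proper $G$-action combined with Theorem~\ref{the:rational_permutation_modules}~\ref{the:rational_permutation_modules:iso:chi_and_h_general}; both arguments are valid.
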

\begin{proof}~\ref{the:a_priori_estimates:R_Q(G)}
This follows from Theorem~\ref{the:chiG(LbackslashunderlineEGamma} and
Theorem~\ref{the:rational_permutation_modules}
\\[1mm]~\ref{the:a_priori_estimates:G/backslashunderlineEG}  
This follows from Theorem~\ref{the:chiG(LbackslashunderlineEGamma}, Theorem~\ref{the:rational_permutation_modules}
and Lemma~\ref{lem:quot} as soon as we have shown
\begin{eqnarray}
\quad \quad \dim_{\IQ}\bigl(K_0(\underline{B}\Gamma) \otimes_{\IZ} \IQ\bigr)
- \dim_{\IQ}\bigl(K_1(\underline{B}\Gamma) \otimes_{\IZ}\IQ\bigr)
& = &
\sum_{l \in \IZ} (-1)^l \cdot \rk_{\IZ}\bigl((\Lambda^{l} L)^{G}\bigr).
\label{K_0-k_1_is_sum_alt_LammbdaL}
\end{eqnarray}
Since $G$ acts properly on $L\backslash \eub{\Gamma}$, there is an isomorphism of $\IQ$-modules
\[
K_i(\bub{\Gamma}) \otimes_{\IZ} \IQ \cong \bigl(K_i(L\backslash \eub{\Gamma}) \otimes_{\IZ} \IQ\bigr)^G 
\]
Now~\eqref{K_0-k_1_is_sum_alt_LammbdaL} follows from
Theorem~\ref{the:rational_permutation_modules}~\ref{the:rational_permutation_modules:iso:chi_and_h_general}.
\\[1mm]~\ref{the:a_priori_estimates:K(Cast)}  The first equation follows 
from Theorem~\ref{the:chiG(LbackslashunderlineEGamma} and
Lemma~\ref{lem:kg}. The second equation is a direct consequence of the Baum-Connes Conjecture,
which is known to be true for $\Gamma$ (see~\cite{Higson-Kasparov(2001)}), 
and from the isomorphism $K_i^{\Gamma}(\underline{E}\Gamma) 
\xrightarrow{\cong} K_i^G(L\backslash \underline{E}\Gamma)$ coming from 
the fact that $L$ acts freely on $\underline{E}\Gamma$.
\end{proof}

\begin{remark}\label{rem:expressing_sums_involving_call(M)}
Recall that we can write $\calm$ as the disjoint union
\[\calm = \coprod_{\{1\} \subsetneq C \subseteq G} \calm(C).
\]
This implies
\begin{eqnarray*}
\sum_{(M) \in \calm} \frac{1}{|M|}
& = & 
\sum_{\{1\} \subsetneq C \subseteq G} \frac{|\calm(C)|}{|C|};
\\
\sum_{(M) \in \calm} \IQ[G/\pi(M)]
& = & 
\sum_{\{1\} \subsetneq C \subseteq G} |\calm(C)| \cdot [\IQ[G/C]];
\\
\sum_{(M) \in \calm} \frac{|M| -1}{|M|} 
& = & 
\sum_{\{1\} \subsetneq C \subseteq G} |\calm(C)| \cdot \frac{|C| -1}{|C|};
\\
 \sum_{(M) \in \calm} \frac{|M|^2 -1}{|M|} 
& = & 
\sum_{\{1\} \subsetneq C \subseteq G} |\calm(C)| \cdot \frac{|C|^2 -1}{|C|}.
\end{eqnarray*}
Hence one can compute the sums of the right sides of the equations above if
one knows all the numbers $|\calm(C)|$. These have been computed explicitly
in Theorem~\ref{the:H(C;L)_in_A(G)}~\ref{the:H(C;L)_in_A(G):maximal_subgroups}.
\end{remark}


\typeout{-------------------- References -------------------------------}


\end{document}